\theoremstyle{plain}
\newtheorem{theorem}{Theorem}
\newtheorem{lemma}[theorem]{Lemma}
\theoremstyle{definition}
\newtheorem{definition}[theorem]{Definition}
\title{Method of Weighted Words on Cylindric Partitions}
\author{Burcu Barsakçı}
\date{Sabancı University, İstanbul, Turkey \\  
    \texttt{burcubarsakci@sabanciuniv.edu}\\[2ex]%
}
\begin{document}
	\maketitle
	
	\begin{abstract}
\noindent We study the generating functions of cylindric partitions having profile $c=(c_1, c_2, \ldots, c_r)$ with rank $2$ and levels $2, 3$ and $4$. As a result, we give expressions alternative to Borodin's formula for these generating functions. We use the method of weighted words which was first introduced by Alladi and Gordon, later was applied by Dousse in a new version to prove some partition identities and to get infinite products. We adapt the method to our subject with a more combinatorial approach. \\[1pt]

		\noindent\textbf{Keywords:} cylindric partitions, ordinary partitions, method of weighted words, generating functions, shapes of slices of cylindric partitions
        \\[1pt]
        
		\noindent\textbf{MSC2020:} 05A15, 05A17, 11P81
        
	\end{abstract}

	
	\section{Introduction}
	\label{sec:intro}
Cylindric partitions are given by the following definition due to Gessel and Krattenthaler \cite{gessel1997cylindric}. 
\begin{definition} \label{def cyc}
 Let $r$ and $\ell$ be positive integers. Let $c=(c_1, c_2, \ldots, c_r)$ be a composition where $c_1 + c_2 + \ldots + c_r = \ell$. A cylindric partition with profile $c$ is a vector partition $\Lambda=(\lambda^{(1)}, \lambda^{(2)}, \ldots, \lambda^{(r)})$ where each $\lambda^{(i)}$ is an ordinary partition such that $\lambda^{(i)}=(\lambda_1^{(i)} , \lambda_2^{(i)}, \ldots , \lambda_{s_i}^{(i)})$ and for all $i$ and $j$, 
$$\lambda_j^{(i)} \geq \lambda_{j + c_{i+1}}^{(i+1)}, ~~\; \lambda_j^{(r)} \geq \lambda_{j+c_1}^{(1)}.$$ 
The integer $r$ is called the rank of the cylindric partition, while the integer $\ell$ is called the level of the cylindric partition.   
\end{definition}	
The size $|\Lambda|$ of a cylindric partition $\Lambda=(\lambda^{(1)}, \lambda^{(2)}, \ldots, \lambda^{(r)})$ is defined to be the sum of all the parts of the ordinary partitions included in the cylindric partition. The largest part of a cylindric partition $\Lambda$ is defined to be the maximum part among all the parts of the ordinary partitions included in the cylindric partition and denoted by $\max(\Lambda)$. For example, for the cylindric partition $\Lambda=((5,4),(8,2),(7,5,1))$ with profile $c=(1,1,1)$, $|\Lambda|=32$ and $\max(\Lambda)=8$. 
The following generating function
\begin{equation} \label{F_c(z,q)}
F_c (z, q):= \sum_{\Lambda \in P_c} z^{\max(\Lambda)} q^{|\Lambda|}
\end{equation}
is the generating function for cylindric partitions, where $P_c$ denotes the set of all cylindric partitions with profile $c$. Note that $F_c (z, q) = F_{S(c)} (z, q)$ where $c=(c_1, c_2, \ldots, c_r)$ and $S(c)=(c_2, c_3, \ldots, c_r, c_1)$ \cite{borodin2007periodic, corteel2022cylindric}. The generating function of cylindric partitions with a given profile is given by Borodin's formula \cite{borodin2007periodic}:
\begin{theorem} \label{Borodin}
Let $c=(c_1, c_2, \ldots, c_r)$ be a composition of $\ell$ where $\ell$ and $r$ are positive integers. Define $t:=r+\ell$ and $s(i,j):=c_i + c_{i+1}+ \ldots + c_j$. Then the generating function for cylindric partitions having profile $c=(c_1, c_2, \ldots, c_r)$ is given by the following formula:
\[ F_c (1, q)= \frac{1}{(q^t; q^t)_{\infty}} \prod_{i=1}^r \prod_{j=i}^r \prod_{m=1}^{c_i} \frac{1}{(q^{m+j-i+s(i+1, j)}; q^t)_{\infty}} \prod_{i=2}^r \prod_{j=2}^{i} \prod_{m=1}^{c_i} \frac{1}{(q^{t-m+j-i-s(j, i-1)}; q^t)_{\infty}}.
\]   
\end{theorem}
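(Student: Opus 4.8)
The plan is to prove Borodin's formula by realizing the generating function $F_c(1,q)$ as a specialization of a character of an affine Lie algebra, or equivalently via the transfer-matrix/free-fermion machinery, following the route of Borodin's original argument. First I would encode a cylindric partition with profile $c=(c_1,\dots,c_r)$ as a sequence of interlacing ordinary partitions arranged around a cycle, and interpret the conditions $\lambda_j^{(i)} \ge \lambda_{j+c_{i+1}}^{(i+1)}$ as a horizontal-strip type constraint once each partition is suitably shifted. Concretely I would pass to the ``particle'' description: record for each column a nondecreasing/nonincreasing step sequence (a lattice path), so that a cylindric partition becomes a periodic configuration of non-intersecting lattice paths on a cylinder of circumference $t = r+\ell$. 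The size $|\Lambda|$ becomes a weighted area statistic on these paths.

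Next I would set up the transfer matrix acting on the fermionic Fock space. The key identities are the commutation relations between the half-vertex operators $\Gamma_\pm(z)$ (equivalently, the operators that add a horizontal strip): $\Gamma_+(z)\Gamma_-(w) = (1 - zw)^{-1}\,\Gamma_-(w)\Gamma_+(z)$. The generating function $F_c(1,q)$ is then a vacuum expectation value of a product of $r$ such operators, one per component of the profile, with the cyclic closure implemented by taking a trace (because the partition lives on a cylinder, not a strip). So the core computation is $\operatorname{Tr}\big(q^{L_0}\,\Gamma_{\epsilon_1}(\cdot)\cdots\Gamma_{\epsilon_r}(\cdot)\big)$ for an appropriate choice of signs $\epsilon_i$ dictated by $c$, where $L_0$ is the energy operator tracking $|\Lambda|$ and the arguments of the $\Gamma$'s are monomials in $q$ whose exponents are exactly the shifts $j - i + s(i+1,j)$ appearing in the statement.

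Then I would evaluate this trace. Moving all $\Gamma_-$'s to the right of all $\Gamma_+$'s produces the double product of factors $(1 - q^{(\text{shift})})^{-1}$; because of the trace one must also commute operators ``around'' the cylinder, which is what generates the modular factor $1/(q^t;q^t)_\infty$ and promotes each finite factor $(1-q^a)^{-1}$ to an infinite product $(q^a;q^t)_\infty^{-1}$. Carefully bookkeeping which pairs $(i,j)$ with $i \le j$ contribute, and which pairs with $j \le i$ contribute the ``wrap-around'' terms $t - m + j - i - s(j,i-1)$, yields precisely the two nested triple products in the theorem, with $m$ ranging over $1,\dots,c_i$ because the $i$-th partition is shifted in $c_i$ distinct ways. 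The substitution $z=1$ is what lets the trace converge and collapse to this clean product; retaining $z$ would only track $\max(\Lambda)$ and is not needed here.

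The main obstacle, and the step I would spend the most care on, is the combinatorial translation: getting the shifts exactly right so that the exponents emerging from the commutation relations match $m + j - i + s(i+1,j)$ and $t - m + j - i - s(j,i-1)$ on the nose, including the precise ranges of the indices and the correct treatment of the cyclic boundary (the identity $F_c(z,q) = F_{S(c)}(z,q)$ under rotation of the profile should fall out as a consistency check). An alternative, more self-contained route that avoids vertex operators is to use the Andrews--Gordon / Bailey-pair style iteration, or Krattenthaler's determinantal formula for cylindric partitions together with a Lindström--Gessel--Viennot interpretation, and then evaluate the resulting determinant; but the bijective bookkeeping of the shifts remains the crux in every approach.
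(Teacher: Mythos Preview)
The paper does not contain a proof of this theorem. Theorem~\ref{Borodin} is stated as a background result due to Borodin, with the remark that ``in his proof for the above formula, Borodin used a probabilistic approach'' and that Foda and Welsh later gave a proof in the context of affine and $\mathcal{W}_r$ algebras. The paper's own contribution is to derive, for a handful of specific profiles of rank~$2$ and levels~$2,3,4$, alternative series expressions whose equality with Borodin's product is then a consequence (or a byproduct) of known identities; it never attempts the general formula.

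Your sketch is therefore not to be compared against anything in the paper, but it is worth noting that what you outline is essentially the vertex-operator/periodic-Schur-process route, which is indeed how Borodin's original argument can be phrased (his ``probabilistic approach'' is the periodic Schur process, and the trace computation you describe is the standard way to evaluate its partition function). The commutation $\Gamma_+(z)\Gamma_-(w) = (1-zw)^{-1}\Gamma_-(w)\Gamma_+(z)$ together with the trace over Fock space is the right engine, and the factor $1/(q^t;q^t)_\infty$ does come from the trace of $q^{L_0}$ on the charge-zero sector. Your identification of the crux---matching the exponents $m+j-i+s(i+1,j)$ and $t-m+j-i-s(j,i-1)$ to the spectral parameters of the $\Gamma$'s---is accurate; that bookkeeping is genuinely the delicate part, and your proposal does not actually carry it out. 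As a plan it is sound, but as a proof it remains a plan: the ``carefully bookkeeping'' step is where all the content lives, and you have not shown that the exponents line up.
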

In the formula above and throughout, the following $q$-Pochhammer symbols \cite{gasper2004basic} are used:
\[ (a;q)_n:= \prod_{k=1}^n (1-aq^{k-1}) ~~ \text{and} ~~ (a;q)_{\infty}:=\lim_{n \rightarrow \infty} (a;q)_n ,\]
where $n \in \mathbb{N}, ~ a,q \in \mathbb{C} ~\text{and} ~ |q|<1$.

In his proof for the above formula, Borodin used a probabilistic approach \cite{borodin2007periodic}. Then, Foda and Welsh gave a proof for Borodin's formula in the context of affine and $\mathcal{W}_r$ algebras \cite{foda2016cylindric}. In their work \cite{corteel20192}, Corteel and Welsh used cylindric partitions to reproduce the four $\mathrm{A}_2$ Rogers - Ramanujan type identities and to prove a similar fifth identity. Corteel, Dousse, and Uncu worked on the cylindric partitions with a given profile for rank $3$ and level $5$, thus obtaining new $\mathrm{A}_2$ Rogers - Ramanujan type identities in \cite{corteel2022cylindric}. Warnaar's work in \cite{warnaar2023a2} gave results for $F_c(z, q)$ \eqref{F_c(z,q)} for rank $3$ and level $\not\equiv 0 \pmod{3}$. Alternative generating functions for cylindric partitions of various profiles in the form of series or infinite product times series have been considered in the works of Kanade and Russell \cite{KanadeRussell}, Uncu \cite{uncu2023proofs}, and Warnaar \cite{Warnaar2025} using other approaches. 

We study the generating functions of cylindric partitions with a profile $c=(c_1, c_2, \ldots, c_r)$ for rank $2$ and levels $2, 3$ and $4$. Borodin's formula \cite{borodin2007periodic} gives the generating functions of cylindric partitions as infinite products. In contrast to Borodin's proof of this formula, we used a more combinatorial approach in our proofs and we get alternative expressions for the generating functions of cylindric partitions with the above rank and levels. Due to the rank-level duality \cite{gessel1997cylindric}, we also get the results for cylindric partitions with rank $3$ - level $2$, and rank $4$ - level $2$. The results we have found are given below, where the left hand sides are the alternative expressions we find while the right hand sides are the infinite products obtained by Borodin's formula. The expressions we obtained for profiles $c=(1,1)$ and $c=(2,0)$ are also obtained in \cite{kurcsungoz2023combinatorial} with another combinatorial interpretation, while the expression we get for profile $c=(3,1)$ can be found in \cite{warnaar2023a2} in the context of representation theory. We have the following generating functions for cylindric partitions with the given profiles:

\noindent
Profile $c=(1,1)$:
\begin{equation} \label{c=(1,1)}
 \frac{(-q ; q^2)_{\infty}}{(q ; q)_{\infty}}=\frac{1}{(q^4;q^4)_{\infty} (q;q^4)_{\infty}^2 (q^3;q^4)_{\infty}^2}.    
\end{equation}
Profile $c=(2,0)$ :
\begin{equation} \label{c=(2,0)}
\frac{(-q^2 ; q^2)_{\infty}}{(q ; q)_{\infty}}=\frac{1}{(q;q)_{\infty} (q^2;q^4)_{\infty}}.
\end{equation}
Profiles $c=(2,1)$ and $c=(1,1,0)$: 
\begin{equation} \label{c=(2,1)}
\bigg(\sum_{n\geq 0} \frac{q^{n^2}}{(q^4; q^4)_n}\bigg) \cdot \frac{(-q^2 ; q^2)_{\infty}}{(q ; q)_{\infty}}=\frac{1}{(q;q)_{\infty} (q;q^5)_{\infty} (q^4;q^5)_{\infty}}.   
\end{equation}
Profiles $c=(3,0)$ and $c=(2,0,0)$:
\begin{equation} \label{c=(3,0)}
\bigg(\sum_{n\geq 0} \frac{q^{n^2+2 \cdot n}}{(q^4; q^4)_n}\bigg) \cdot \frac{(-q^2 ; q^2)_{\infty}}{(q ; q)_{\infty}}=\frac{1}{(q;q)_{\infty} (q^2;q^5)_{\infty} (q^3;q^5)_{\infty}}.
\end{equation}
Profiles $c=(4,0)$ and $c=(2,0,0,0)$:
\begin{equation} \label{c=(4,0)}
\bigg(\sum_{n\geq 0} \frac{q^{n(n+1)} \cdot (-q^2 ; q^2)_n }{(-q^3; q^2)_n \cdot (q^2 ; q^2)_n}\bigg) \cdot \frac{(-q^3 ; q^2)_{\infty}}{(q ; q)_{\infty}}=\frac{1}{(q;q)_{\infty} (q^2;q^6)_{\infty} (q^3;q^6)_{\infty} (q^4;q^6)_{\infty}}.
\end{equation}
Profiles $c=(2,2)$ and $c=(1,0,1,0)$:
\begin{equation} \label{c=(2,2)}
\bigg(1 + \sum_{n\geq 1} \frac{2\cdot q^{n(n+1)} \cdot (-q^2 ; q^2)_{n-1} }{(q^2; q^2)_n \cdot (-q ; q^2)_n}\bigg) \cdot \frac{(-q ; q^2)_{\infty}}{(q ; q)_{\infty}} =\frac{1}{(q^6;q^6)_{\infty}(q;q^6)_{\infty}^2 (q^2;q^6)_{\infty}^2 (q^4;q^6)_{\infty}^2 (q^5;q^6)_{\infty}^2}.
\end{equation}
Profiles $c=(3,1)$ and $c=(1,1,0,0)$:
\begin{equation} \label{c=(3,1)}
\bigg(\sum_{n\geq 0} \frac{q^{n^2}}{(q^2; q^2)_n }\bigg) \cdot \frac{(-q^2 ; q^2)_{\infty}}{(q ; q)_{\infty}}=\frac{1}{(q;q)_{\infty} (q;q^6)_{\infty} (q^3;q^6)_{\infty} (q^5;q^6)_{\infty}}. 
\end{equation}
To be able to obtain the above expressions, we fixed the profile and then studied the slices of cylindric partitions with the given profile \cite{kurcsungoz2023decomposition}. We give the definition for slices of cylindric partitions in Section \ref{sec:pre}. For any profile, there are only finitely many shapes for slices and each slice with a certain shape can have only certain weights $\pmod{r}$, where $r$ is the rank of the cylindric partition, while between weights of slices with certain shapes there are some minimum distance conditions. Therefore, we tried to implement the method of weighted words \cite{alladi1995schur, dousse2017method, dousse2017siladic} on cylindric partitions. This method was first introduced by Alladi and Gordon \cite{alladi1995schur} to prove a special (ordinary) partition identity which is Schur's identity \cite{schur1973beitrag} that states the number of partitions satisfying some congruence conditions are equal to the number of partitions satisfying some distance conditions. To prove Schur's identity, they computed the generating functions for the minimal partitions satisfying some minimal distance conditions, and used $q$-series identities. Later, this method was used by Dousse to prove other partition identities, but she created a new version of the method in which she used recurrences and $q$-difference equations instead of using minimal partitions and $q$-series identities \cite{dousse2017method, dousse2017siladic}.	
We would like to draw the reader's attention to the identities below \cite{andrews1979partitions}:
\begin{equation} \label{A1}
\sum_{n\geq 0} \frac{q^{n^2}}{(q^4; q^4)_n}=\frac{1}{(-q^2;q^2)_{\infty} (q;q^5)_{\infty} (q^4;q^5)_{\infty}}.   
\end{equation}
\begin{equation} \label{A2}
\sum_{n\geq 0} \frac{q^{n^2+2 \cdot n}}{(q^4; q^4)_n}=\frac{1}{(-q^2;q^2)_{\infty} (q^2;q^5)_{\infty} (q^3;q^5)_{\infty}}.
\end{equation}
Using \eqref{A1} and \eqref{A2}, with the expressions we obtain for profiles $c=(2,1)$ and $c=(3,0)$, we get the infinite products in \eqref{c=(2,1)} and \eqref{c=(3,0)} given by Borodin's formula \cite{borodin2007periodic}. Similarly, using identity \eqref{gasperidentity} below \cite{gasper2004basic} and Euler's famous identity for ordinary partitions, we get the infinite product in \eqref{c=(3,1)} given by Borodin's formula.
\begin{equation} \label{gasperidentity}
\sum_{n \geq 0} \frac{q^{\frac{n(n-1)}{2}}}{(q;q)_n} \cdot z^n = (-z; q)_{\infty}.
\end{equation}
We should emphasize that we were neither trying to prove an identity nor dealing with ordinary partitions. Inspired by the version of the method created by Dousse \cite{dousse2017method, dousse2017siladic}, using the minimum distance conditions and modulo congruence conditions on slices, we computed the beginning terms of the generating functions of cylindric partitions with chosen profiles. Since for a fixed profile, we deal with slices with certain shapes which can have certain weights, we did not use any total order on slices we work on or consider any dilations; but we obtained slice flows for the given profiles which allowed us to get the alternative expressions for the generating functions of cylindric partitions. Slice flows are essentially a partial order on slices \cite{kurcsungoz2023decomposition}. Instead of using recurrences and $q$-difference equations, we developed a more combinatorial way in our approach. 

For the rest of the paper, we have the following structure. In Section \ref{sec:pre}, we give necessary definitions and standard results. In Sections \ref{sec:1}, \ref{sec:2} and \ref{sec:3}, we construct the left hand sides of the formulas above. In particular, in Section \ref{sec:1} we obtain formulas \eqref{c=(1,1)}, \eqref{c=(2,0)}, in Section \ref{sec:2} we obtain formulas \eqref{c=(2,1)}, \eqref{c=(3,0)}, and finally in Section \ref{sec:3} one can find formulas \eqref{c=(4,0)}, \eqref{c=(2,2)}, \eqref{c=(3,1)}. In Section \ref{sec:future}, we give details about the future work.

\section{Preliminaries} 
\label{sec:pre}
An ordinary partition of a positive integer $n$ is a finite non-increasing sequence of positive integers $b_1, b_2, \ldots b_k$ such that $n=b_1 + b_2 + \ldots + b_k$ where the $b_i$ are called parts of the partition for $i=1,2, \ldots, k$.
If we take $n=6$, the corresponding partitions are 
\[ 6, ~~ 5+1, ~~4+2, ~~ 3+3, ~~ 4+1+1, ~~ 3+2+1, ~~ 2+2+2, ~~ 3+1+1+1, \] 
\[2+2+1+1, ~~ 2+1+1+1+1, ~~ 1+1+1+1+1+1.\]
The generating function $P(q)$ for the sequence $p(0), p(1), p(2), \ldots$ is the power series in $q$:
$$ P(q)=\sum_{k\geq0} p(k) \cdot q^k,$$
where $p(k)$ gives the number of partitions of $k$. This is called the partition function. We have the following infinite product representation of the partition function \cite{andrews1998theory}.
$$ P(q)=\prod_{k\geq1} \frac{1}{1-q^k}.$$
The last series and the infinite product converge absolutely for $|q|<1$. Considering geometric series, for any positive integer $k$ we have,
$$ \frac{1}{1-q^k}= 1 + q^k + q^{k+k}+ q^{k+k+k}+ \ldots .$$
This yields
\begin{align*}
P(q)&=\prod_{k\geq1} \frac{1}{1-q^k} \\
&=(1+q^1+q^{1+1}+ \ldots)(1+q^2+q^{2+2}+\ldots)\ldots (1+q^k+q^{k+k}+\ldots) \ldots\\
&=1+q^1+q^{1+1}+q^2+q^{1+1+1}+q^{1+2}+q^3+q^{1+1+1+1}+\ldots \\
&=1+ q^1+2q^2+3q^3+5q^4+7q^5+11q^6+ 15q^7+22q^8+ \ldots.
\end{align*}
Here, $p(0)$ is the coefficient of $q^0$ and represents the empty partition, while $p(5)=7$ is the coefficient of $q^5$ and it tells us the number of partitions of $5$ is $7$.

There are graphical representations of ordinary partitions and one them is Ferrers boards \cite{andrews2004integer}. For instance, the Ferrers board of the partition $5+3+1$ is

\ytableausetup{centertableaux}
\ytableaushort
{\none,\none }
* {5,3,1}
* [*(gray)]{0,0,0}.

If we call ordinary partitions one-dimensional partitions, we also have two-dimensional partitions, in other words, plane partitions. Plane partitions of $n$ are two-dimensional arrays of non-negative integers such that rows from left to right and columns from top to bottom are non-increasing \cite{andrews2004integer}. One of the plane partitions of $n=30$ is 
\begin{align*}
& 4 ~~ 4 ~~ 3 ~~ 2 ~~ 1 \\
& 4 ~~ 3 ~~ 1 ~~ 1\\
& 3 ~~ 2 ~~ 1\\
& 1
\end{align*}
The $3$-D Ferrers board of the above plane partition is 
\begin{center}
    \includegraphics[width=0.3\linewidth]{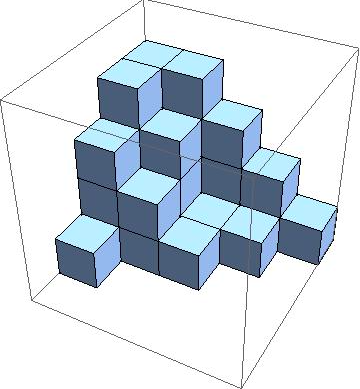}
    
{\small Figure 2.1. An example of a plane partition of $n=30$\\ (figure credit: {\url{https://en.wikipedia.org/wiki/Plane_partition}} )}
   \captionlistentry[figure]{An example of a plane partition of $n=30$}
    \label{fig:enter-label0}
\end{center}
The generating function for plane partitions is given by the following MacMahon formula \cite{macmahon2001combinatory}:
\begin{align*}
PP(q)&= \sum_{k\geq0} pp(k) \cdot q^k \\
& =\prod_{k\geq1}\frac{1}{(1-q^k)^k}.
\end{align*}
A cylindric partition with a profile $c=(c_1, c_2, \ldots, c_r)$ is a modified plane partition such that starting from the second row, rows are shifted to the left by the profile so that non-negative integers in the columns from top to bottom and in the rows from left to right are in the non-increasing form. On top of this, the first row is repeated after the last row (possibly shifted). This will be taken into account for the inequalities down each column, but it does not add to the weight. Please recall Definition \ref{def cyc} \cite{gessel1997cylindric}.

Consider the cylindric partition $\Lambda=((5,4),(8,2),(7,5,1))$ with profile $c=(1, 1, 1)$ where $\lambda^{(1)}=(5,4)$, $\lambda^{(2)}=(8,2)$, and $\lambda^{(3)}=(7,5,1)$ are ordinary partitions, with $c_1=c_2=c_3=1$ and $\lambda^{(2)}$ is shifted to the left by $c_2$, $\lambda^{(3)}$ is shifted to the left by $c_3$, and as a hidden condition $\lambda^{(1)}$ is shifted to the left by $c_1$, so that horizontally and vertically we have numbers in the non-increasing form. To get a better visualization of these shifts created by the profile, if the number of parts of ordinary partitions contained in the cylindric partition are not equal, we add zeroes to the ordinary partitions. So, for our running example, we write $\Lambda=((5,4,0),(8,2,0),(7,5,1))$ with profile $c=(1, 1, 1)$, and we do the shifts as follows: 
\[
\begin{array}{ccc ccc ccc} 
 & & & 5 & 4 & 0 \\
 && 8 & 2 & 0  & \\
 & 7 & 5 & 1  \\
\textcolor{lightgray}{5} & \textcolor{lightgray}{4} 
& \textcolor{lightgray}{0}	
\end{array}
\]
The $3$-D Ferrers board of the running example is as follows \cite{kurcsungoz2023decomposition}:
\begin{center} 
\includegraphics[width=0.3\linewidth]{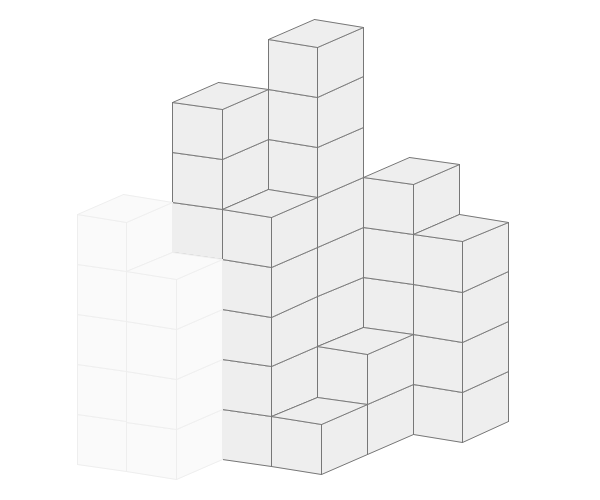}

{\small Figure 2.2. Visualization of $\Lambda=((5,4),(8,2),(7,5,1))$ with $c=(1,1,1)$}
\captionlistentry[figure]{ Visualization of $\Lambda=((5,4),(8,2),(7,5,1))$ with $c=(1,1,1)$}
 \label{fig:enter-ex1lump}
\end{center}
Note that we will work on the horizontal slices of cylindric partitions with a given profile, and we simply call them slices. We will give the formal definition after sharing the visualization for slices of the running example \cite{kurcsungoz2023decomposition}. 
\begin{center} \label{slices}
\includegraphics[width=0.4\linewidth]{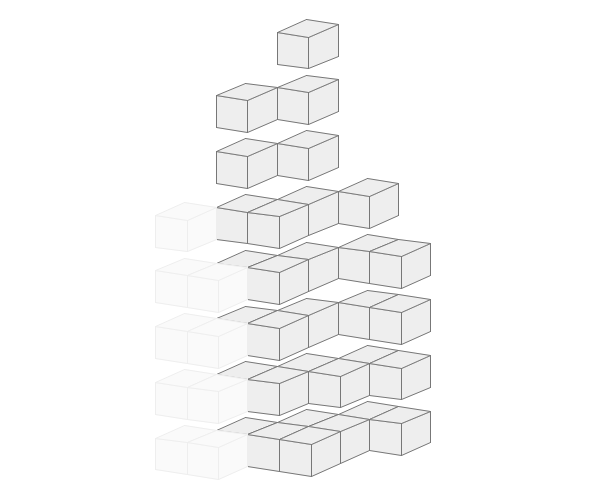}

{\small Figure 2.3. Slices of $\Lambda=((5,4),(8,2),(7,5,1))$ with $c=(1,1,1)$}
\captionlistentry[figure]{ Slices of $\Lambda=((5,4),(8,2),(7,5,1))$ with $c=(1,1,1)$}
 \label{fig:enter-slices}
\end{center}
\hfill

\begin{definition} \cite{kurcsungoz2023decomposition} \label{dfn slice}
Let $\Lambda=(\lambda^{(1)}, \lambda^{(2)}, \ldots, \lambda^{(r)})$ be a cylindric partition with profile $c=(c_1, c_2, \ldots, c_r)$ where each $\lambda^{(i)}$ is an ordinary partition such that $\lambda^{(i)}=(\lambda_1^{(i)}, \lambda_2^{(i)}, \ldots , \lambda_{s_i}^{(i)})$. Let $m:= \max (\Lambda)$, and set $\Lambda=\Lambda^{(m)}$. Then, form the cylindric partition $\prescript{}{m}\Sigma$ with the given profile so that the positions $(i,j)$ such that $\lambda_j^{(i)}=m$ are replaced by $1$'s, and all other positions are replaced by $0$'s in $\Lambda^{(m)}$. 

Subtracting $1$ from each position $(i,j)$ such that $\lambda_j^{(i)}=m$ in $\Lambda^{(m)}$, leaving all other positions as they are, we get $\Lambda^{(m-1)}$. Now, form the cylindric partition $\prescript{}{m-1}\Sigma$ with the given profile so that the positions $(i,j)$ such that $\lambda_j^{(i)}=m-1$ are replaced by $1$'s, and all other positions are replaced by $0$'s in $\Lambda^{(m-1)}$.

We continue in this fashion until we obtain $\max \lambda_j^{(i)}=0$, in other words, until we obtain the empty partition with the given profile. We denote it by $\prescript{}{0}{} \Sigma$.

These $\prescript{}{i} \Sigma$'s are called slices of cylindric partition $\Lambda$ with profile $c=(c_1, c_2, \ldots, c_r)$.  
\end{definition}
Now, let us share with you the slices of the running example $\Lambda=((5,4),(8,2),(7,5,1))$ with profile $c=(1, 1, 1)$, starting with the empty slice $\prescript{}{0}{} \Sigma$. Note that gray squares are created by the profile and have zero weight, while white squares have weight $1$. Except the gray squares created by the profile, other squares having zero weight are not represented by gray squares. We have the following order for the slices of the running example: $\prescript{}{0}{\Sigma} \leq \prescript{}{8}{\Sigma} \leq \prescript{}{7}{\Sigma} \leq \ldots \leq \prescript{}{1}{\Sigma}$. Their visual representations are given below.
\hfill

\ytableausetup{centertableaux}
\ytableaushort
{\none,\none }
* {3,2,1}
* [*(gray)]{3,2,1} ~ $\leq$ 
\ytableausetup{centertableaux}
\ytableaushort
{\none,\none }
* {3,3,1}
* [*(gray)]{3,2,1} ~ $\leq$
\ytableausetup{centertableaux}
\ytableaushort
{\none,\none }
* {3,3,2}
* [*(gray)]{3,2,1} ~ $\leq$ 
\ytableausetup{centertableaux}
\ytableaushort
{\none,\none }
* {3,3,2}
* [*(gray)]{3,2,1} ~ $\leq$ 
\ytableausetup{centertableaux}
\ytableaushort
{\none,\none }
* {4,3,3}
* [*(gray)]{3,2,1} ~ $\leq$ 
\hfill

\ytableausetup{centertableaux}
\ytableaushort
{\none,\none }
* {5,3,3}
* [*(gray)]{3,2,1} ~ $\leq$ 
\ytableausetup{centertableaux}
\ytableaushort
{\none,\none }
* {5,3,3}
* [*(gray)]{3,2,1} ~ $\leq$ 
\ytableausetup{centertableaux}
\ytableaushort
{\none,\none }
* {5,4,3}
* [*(gray)]{3,2,1} ~ $\leq$ 
\hfill

\ytableausetup{centertableaux}
\ytableaushort
{\none,\none }
* {5,4,4}
* [*(gray)]{3,2,1} ~. 
\hfill

Now, if we put the slices in the order above, so that the gray squares overlap and $\prescript{}{8}{\Sigma}$ is on the top, the total number of white squares in the same position gives the weight in that position, and we get the cylindric partition $\Lambda=((5,4),(8,2),(7,5,1))$ with profile $c=(1, 1, 1)$. Note that this is another visualization for Figure $2.3$, and we will use it in our proofs.  

Each slice having a profile $c=(c_1, c_2, \ldots, c_r)$ has a shape. We define the shape of a slice as follows: Without distinguishing between the white and gray squares in the representation of a slice, we remove $k$ squares from each row, where $k$ is the length of the $r^{th}$ row. Since in a slice with a profile $c=(c_1, c_2, \ldots, c_r)$, there are $r$ rows, after erasing the $r^{th}$ row, we will be left with $(r-1)$ rows (counting zeroes), and starting from the top row, we write the number of squares left in the rows, respectively. We call this the shape of a slice \cite{kurcsungoz2023decomposition}. Shapes of $\prescript{}{0}{\Sigma}, \prescript{}{8}{\Sigma}, \prescript{}{7}{\Sigma}, \ldots, \prescript{}{1}{\Sigma}$ above are $(2,1), ~ (2,2), ~ (1,1), ~ (1,1), ~ (1,0), ~ (2,0), ~ (2,0), ~ (2,1)$ and $(1,0)$, respectively.

Note that for a fixed profile, there are exactly 
$\binom{\ell+r-1}{r-1}$ shapes, 
and each shape has a certain weight $\pmod{r}$ \cite{kurcsungoz2023decomposition}. Starting with the minimal slice with a certain shape, we get other slices with the same shape by adding the same number of white squares (with weight $1$) to each row. In the small example below, one can follow this process.

\ytableausetup{centertableaux}
\ytableaushort
{\none,\none }
* {3,3,1}
* [*(gray)]{3,2,1} ~ $\leq$
\ytableausetup{centertableaux}
\ytableaushort
{\none,\none }
* {4,4,2}
* [*(gray)]{3,2,1} ~ $\leq$
\ytableausetup{centertableaux}
\ytableaushort
{\none,\none }
* {5,5,3}
* [*(gray)]{3,2,1} ~.

After giving the necessary information related to the theory, it is time to explain how we apply the method. First of all, we fix the profile. We determine the number of shapes and the corresponding minimal slices having positive weight, and then we determine the minimum distance conditions between the slices. We construct cylindric partitions with a given profile by starting with the slice on the top. After the slice on the top, we can put another slice that contains every single white square in the representation of the slice on the top. After the second slice, if we choose a third slice, it must contain every single white square in the representation of the second slice. And we continue in this fashion.

Consider cylindric partitions having profile $c = (2, 1)$ with rank $r=2$ and level $\ell=3$. So for the slices of the cylindric partitions with the given profile, there are exactly 
$ \binom{\ell+r-1}{r-1}= $ $\binom{4}{1}$
shapes. The corresponding shapes are $(0), (1), (2), (3)$ and let us denote these shapes by $\mathbf{a}, \mathbf{c}, \mathbf{b}$, and $\mathbf{d}$, respectively. Slices that have these shapes with minimum positive weight are shown below. Note that the colored squares are created by the profile and have zero weight. Different colors are used to distinguish the different shapes. Although $aq^1$, $bq^1$, $cq^2$, $dq^2$ are the generating terms of the slices given below, for simplicity of the language, we will name the slices with their generating terms. 

\ytableausetup{centertableaux}
\ytableaushort
{\none,\none }
* {3,3}
* [*(yellow)]{3,2}  
~ ($aq^1$), ~ \ytableausetup{centertableaux}
\ytableaushort
{\none,\none }
* {4,2}
* [*(pink)]{3,2}
~ ($bq^1$),
\ytableausetup{centertableaux}
\ytableaushort
{\none,\none }
* {4,3}
* [*(blue)]{3,2}  
~ ($cq^2$),

\ytableausetup{centertableaux}
\ytableaushort
{\none,\none }
* {5,2}
* [*(red)]{3,2} ~ ($dq^2$).

For example, as shown below, after the slice $aq^1$ we cannot put the slice $dq^2$, as the crossed white square in the slice $aq^1$ is not contained in $dq^2$.  
\begin{center} 
\includegraphics[width=0.25\linewidth]{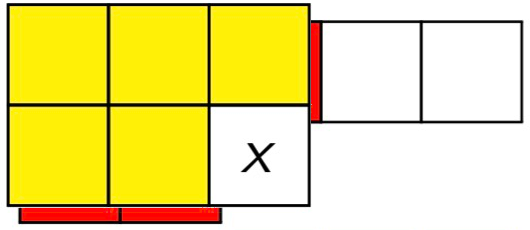}

{\small Figure 2.4. A non-example for slice placement}
\captionlistentry[figure]{ A non-example for slice placement}
 \label{fig:enter-label}
\end{center}
But after $aq^1$, we can put $dq^4$. Moreover, we can have the following placement with the slices $aq^1, bq^3$ and $dq^4$, respectively. 
\begin{center} 
\includegraphics[width=0.3\linewidth]{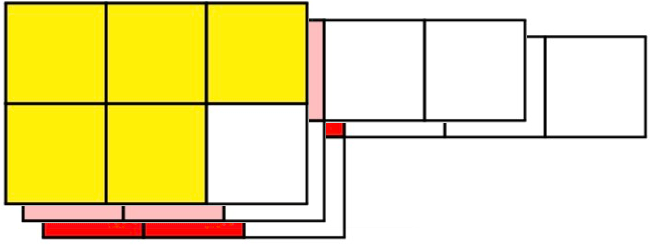}

{\small Figure 2.5. An example for slice placement}
\captionlistentry[figure]{ An example for slice placement}
 \label{fig:enter-slide1}
\end{center}
The above placement of the slices creates the cylindric partition $\Lambda=((2, 2, 1), (3))$ with profile $c=(2, 1)$.  

In the following sections, using the minimum distance conditions on slices, we create slice flows for the given profiles that give the pattern of slices with distinct shapes including weights. 
\begin{definition}
Given an arbitrary but fixed profile $c$,
the \emph{slice flow} for the given profile is a directed graph
in which the generating term $x q^n$ for each non-empty slice is a node,
and there is an edge from $x q^n$ to $y q^{n+1}$ if and only if the slice generated by $y q^{n+1}$
can be obtained from the slice generated by $x q^n$ by adding a single square.
\end{definition}
Slice flow for a given profile is a diagram that gives the partial order on the slices with positive weight for the given profile, where the slices are denoted by their generating terms. Slice flow for a given profile is another form of Hasse diagram in \cite{kurcsungoz2023decomposition}, but instead of using visualizations of the slices with a given profile, we denote the shapes of the slices by words ($a, ~ b$, etc.) and by using the arrows we show the placement rules for slices with the given profile. We did not include the empty slices in the slice flows. For convenience, we horizontally aligned slices with the same shape,
and vertically aligned slices with the same weight.  
The weights increase from left to right. Let us share with you the following slice flow for profile $c=(2, 1)$: 
\footnote{Latex codes for the slice flows are created by the tikzcd editor \url{https://tikzcd.yichuanshen.de}.}
\begin{center}
    \begin{tikzcd}
aq^1 \arrow[rd]            &                            & aq^3 \arrow[rd]            &                            & aq^5 \arrow[rd]            &                            & aq^7 \arrow[rd]            &      \\
                           & cq^2 \arrow[ru] \arrow[rd] &                            & cq^4 \arrow[ru] \arrow[rd] &                            & cq^6 \arrow[rd] \arrow[ru] &                            & cq^8 \\
bq^1 \arrow[ru] \arrow[rd] &                            & bq^3 \arrow[ru] \arrow[rd] &                            & bq^5 \arrow[ru] \arrow[rd] &                            & bq^7 \arrow[ru] \arrow[rd] &      \\
                           & dq^2 \arrow[ru]            &                            & dq^4 \arrow[ru]            &                            & dq^6 \arrow[ru]            &                            & dq^8
\end{tikzcd} \\
 {\small Table 2.1: Profile $c=(2,1)$}
 \captionlistentry[table]{Profile $c=(2,1)$}
\end{center}

We use the slice flows to produce the generating functions of cylindric partitions for given profiles and this process will be explained in the following sections. 
	
\section{Method of Weighted Words on Cylindric Partitions 
with Rank 2 - Level 2}
\label{sec:1}

We start our work applying the method of weighted words on cylindric partitions with rank $2$ - level $2$. There are only two profiles in this case, profile $c=(1,1)$ and $c=(2,0)$.   

\begin{theorem} \label{theorem5}
Cylindric partitions with profile $c=(1,1)$ have the following generating function:
$$ \frac{(-q ; q^2)_{\infty}}{(q ; q)_{\infty}}. $$
\end{theorem}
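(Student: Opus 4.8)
The plan is to present every cylindric partition of profile $c=(1,1)$ as a weakly decreasing sequence of slices, read the resulting combinatorial data off the slice flow, and sum. First I would pin down the slices: a slice of profile $(1,1)$ has entries $0$ and $1$, hence is of the form $\bigl((1^a),(1^b)\bigr)$, and rewriting the inequalities of Definition~\ref{def cyc} in terms of the column lengths $(\lambda^{(1)})'_k$, $(\lambda^{(2)})'_k$ shows that such an array is a valid cylindric partition exactly when $|a-b|\le 1$; its weight is $a+b$. So there are the three shapes predicted by $\binom{\ell+r-1}{r-1}=3$, namely $a$ ($\lambda^{(1)}$ one cell wider), $b$ ($\lambda^{(2)}$ one cell wider), and $c$ (balanced), with minimal positive-weight slices $aq^{1}=\bigl((1),()\bigr)$, $bq^{1}=\bigl((),(1)\bigr)$, $cq^{2}=\bigl((1),(1)\bigr)$; the higher slices of each shape are obtained by adding one cell to each row, so shapes $a,b$ occur at weights $1,3,5,\dots$ and shape $c$ at weights $2,4,6,\dots$. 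Checking which single-cell enlargements stay valid yields the slice flow $aq^{2j-1}\to cq^{2j}$, $bq^{2j-1}\to cq^{2j}$, $cq^{2j}\to aq^{2j+1}$, $cq^{2j}\to bq^{2j+1}$: the balanced slices form a spine, with the two nodes $aq^{2j-1}$, $bq^{2j-1}$ being the only ones strictly between $cq^{2j-2}$ and $cq^{2j}$.

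Next I would invoke the slicing correspondence of the Preliminaries: a cylindric partition $\Lambda$ of profile $(1,1)$ is the same datum as the finite weakly decreasing sequence $\prescript{}{m}\Sigma\le\dots\le\prescript{}{1}\Sigma$ of its nonempty slices (ordered by containment of white cells, which for these slices is the componentwise order on $(a,b)$), with $m=\max(\Lambda)$ and $|\Lambda|$ the sum of the slice weights; conversely any such sequence reassembles to a valid $\Lambda$, because setting $(\lambda^{(1)})'_k,(\lambda^{(2)})'_k$ equal to the two column lengths $a_k,b_k$ of the $k$-th slice produces partitions and the only requirement of Definition~\ref{def cyc} for profile $(1,1)$ is $|a_k-b_k|\le1$ for all $k$, which already holds since each term is a slice. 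From the slice flow one reads the partial order: any two positive-weight slices are comparable except that $aq^{2j-1}$ and $bq^{2j-1}$ are incomparable for each $j\ge1$. Hence a weakly decreasing sequence of slices is exactly the choice, independently over $j\ge1$, of a multiplicity $\ge 0$ for $cq^{2j}$ together with a multiplicity $\ge 0$ for \emph{at most one} of $aq^{2j-1}$, $bq^{2j-1}$. The generating function therefore factors over $j$, and I would conclude with
\[
F_{(1,1)}(1,q)=\prod_{j\ge1}\Bigl(1+\frac{q^{2j}}{1-q^{2j}}\Bigr)\Bigl(1+\frac{2q^{2j-1}}{1-q^{2j-1}}\Bigr)
=\prod_{j\ge1}\frac{1}{1-q^{2j}}\cdot\frac{1+q^{2j-1}}{1-q^{2j-1}}
=\frac{(-q;q^{2})_{\infty}}{(q;q)_{\infty}},
\]
using $(q;q^{2})_{\infty}(q^{2};q^{2})_{\infty}=(q;q)_{\infty}$ in the last step.

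The part I expect to do the real work is the converse half of the slicing correspondence: that an arbitrary weakly decreasing sequence of profile-$(1,1)$ slices truly assembles into something satisfying all the wrap-around inequalities of Definition~\ref{def cyc}, not merely a column-strict array. For profile $(1,1)$ this turns out to be mild, since those inequalities collapse to $|(\lambda^{(1)})'_k-(\lambda^{(2)})'_k|\le1$ for every column $k$, a condition transparently preserved along the sequence because each slice satisfies it; but it is precisely the point where the hypothesis $\ell=2$ (each shape occupying a single column) is used, and it should be spelled out. The only other thing to guard against is that the slice flow really exhibits the full comparability structure — that no directed path links $aq^{2j-1}$ to $bq^{2j-1}$ — which holds because each of them has its unique outgoing arrow to $cq^{2j}$ and its unique incoming arrow from $cq^{2j-2}$, so the two never lie on a common chain.
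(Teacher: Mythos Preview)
Your proof is correct and follows essentially the same route as the paper's: identify the three slice shapes, observe that the slice flow is a spine of $c$-slices with the pair $aq^{2j-1},bq^{2j-1}$ incomparable at each odd level, and factor the generating function as $\prod_{j\ge1}\bigl(1+\tfrac{2q^{2j-1}}{1-q^{2j-1}}\bigr)\bigl(\tfrac{1}{1-q^{2j}}\bigr)$. The paper records the same product (keeping the shape labels $a,b,c$ until the final substitution) and relies on the slicing bijection from its preliminaries, whereas you spell out explicitly why an arbitrary weakly decreasing sequence of profile-$(1,1)$ slices reassembles into a valid cylindric partition; that extra justification is welcome but does not change the argument.
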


\begin{proof}
For cylindric partitions with profile $c = (1, 1)$, there are exactly $ \binom{\ell+r-1}{r-1}= $ $\binom{3}{1}$ shapes. The corresponding shapes are $(0), (1), (2)$ and we will denote these shapes by $a, c$ and $b$, respectively. Slices that have these shapes with minimum positive weight are shown below. Note that the gray squares are created by the profile and have zero weight.

\ytableausetup{centertableaux}
\ytableaushort
{\none,\none }
* {2,2}
* [*(gray)]{2,1} ~ ($aq^1$), \ytableausetup{centertableaux}
\ytableaushort
{\none,\none }
* {3,1}
* [*(gray)]{2,1}
~ ($bq^1$),
\ytableausetup{centertableaux}
\ytableaushort
{\none,\none }
* {3,2}
* [*(gray)]{2,1}  
~ ($cq^2$).

Since the bigger slices with a certain shape are obtained by adding the same number of weight $1$ squares to each row of the minimal slice with the given shape and the number of rows in a slice is determined by the rank, each shape has certain weights modulo $2$ for the given profile. Therefore, slices with shapes $a$ and $b$ can only have odd weights, while slices with shape $c$ can only have even weights. Slice flow for the given profile is:

\begin{center}
\begin{tikzcd}
\color{red}{aq^1} \arrow[rd] &                                         & \color{red}{aq^3} \arrow[rd] &                                         & \color{red}{aq^5} \arrow[rd] &                                         & \color{red}{aq^7} \arrow[rd] &                   \\
                             & \color{red}{cq^2} \arrow[rd] \arrow[ru] &                              & \color{red}{cq^4} \arrow[rd] \arrow[ru] &                              & \color{red}{cq^6} \arrow[rd] \arrow[ru] &                              & \color{red}{cq^8} \\
\color{red}{bq^1} \arrow[ru] &                                         & \color{red}{bq^3} \arrow[ru] &                                         & \color{red}{bq^5} \arrow[ru] &                                         & \color{red}{bq^7} \arrow[ru] &                  
\end{tikzcd}\\
 {\small Table 3.1: Profile $c=(1,1)$}
 \captionlistentry[table]{Profile $c=(1,1)$}
\end{center}

It is straightforward to write the generating function of the cylindric partitions with distinct slices:
$$\prod_{k \geq 0} (1 + aq^{2k+1}+bq^{2k+1}) (1 + cq^{2k+2}).$$
Allowing slices to repeat, we obtain
$$\prod_{k \geq 0} \bigg(1 + \frac{aq^{2k+1}}{1-aq^{2k+1}}+ \frac{bq^{2k+1}}{1-bq^{2k+1}}\bigg) \bigg(1 + \frac{cq^{2k+2}}{1-cq^{2k+2}}\bigg)$$
Now, if we put $a=b=c=1$,this concludes the proof.
\end{proof}

\begin{theorem} \label{theorem6}
Cylindric partitions with profile $c=(2,0)$ have the following generating function:
$$ \frac{(-q^2 ; q^2)_{\infty}}{(q ; q)_{\infty}}. $$
\end{theorem}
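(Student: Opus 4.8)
The plan is to follow the route of the proof of Theorem~\ref{theorem5}: the profile $c=(2,0)$ again has rank $r=2$ and level $\ell=2$, so there are exactly $\binom{\ell+r-1}{r-1}=\binom{3}{1}=3$ shapes, namely $(0),(1),(2)$, which I will denote by $a$, $c$, $b$ respectively. The first step is to pin down the minimal slice of positive weight in each shape class. A slice here is a cylindric partition $(\lambda^{(1)},\lambda^{(2)})$ with entries in $\{0,1\}$; writing $\alpha$ and $\beta$ for the numbers of $1$'s in $\lambda^{(1)}$ and $\lambda^{(2)}$, the inequalities of Definition~\ref{def cyc} for the profile $(2,0)$ reduce to $\alpha\ge\beta\ge\alpha-2$, so a slice is recorded by a pair $(\alpha,\beta)$ with $\beta\in\{\alpha,\alpha-1,\alpha-2\}$, of weight $\alpha+\beta$, and, once the profile squares are accounted for, of shape $\alpha-\beta$. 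Hence the three minimal positive-weight slices are: shape $(1)=c$ of weight $1$ (from $(\alpha,\beta)=(1,0)$), shape $(0)=a$ of weight $2$ (from $(1,1)$), and shape $(2)=b$ of weight $2$ (from $(2,0)$), and I would display these as Young diagrams with the profile squares shaded, in the style of Section~\ref{sec:1}.

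Next I would record the congruence constraints: since larger slices of a fixed shape come from adjoining the same number of weight-$1$ squares to each of the two rows, which preserves $\alpha-\beta$ and raises the weight by $2$, the shape-$c$ slices carry only odd weights while the shape-$a$ and shape-$b$ slices carry only even weights. I then construct the slice flow: checking which slices are reached from a given one by adding a single square, from $cq^{2k+1}$ one reaches both $aq^{2k+2}$ and $bq^{2k+2}$, while from each of $aq^{2k+2}$ and $bq^{2k+2}$ the only outgoing edge is to $cq^{2k+3}$. This is the analogue of Table~3.1 with shapes $a$ and $b$ now carrying even weights and $c$ odd weights. The claim that there are no further edges uses the level restriction: leaving a shape-$a$ or shape-$b$ slice through the other row would require either a shape-$(3)$ slice, impossible at level $2$, or a pair $(\alpha,\beta)$ violating $\alpha\ge\beta$.

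Reading the generating function off the slice flow, the cylindric partitions of profile $(2,0)$ with pairwise distinct slices are enumerated by
\[
\prod_{k\ge 0}\bigl(1+cq^{2k+1}\bigr)\bigl(1+aq^{2k+2}+bq^{2k+2}\bigr),
\]
and allowing slices to repeat replaces each generating term $xq^{n}$ by $\dfrac{xq^{n}}{1-xq^{n}}$, giving
\[
\prod_{k\ge 0}\Bigl(1+\frac{cq^{2k+1}}{1-cq^{2k+1}}\Bigr)\Bigl(1+\frac{aq^{2k+2}}{1-aq^{2k+2}}+\frac{bq^{2k+2}}{1-bq^{2k+2}}\Bigr).
\]
Setting $a=b=c=1$ collapses the $k$-th factor to $\dfrac{1}{1-q^{2k+1}}\cdot\dfrac{1+q^{2k+2}}{1-q^{2k+2}}$, and the product over $k\ge 0$ becomes $\dfrac{(-q^{2};q^{2})_{\infty}}{(q;q^{2})_{\infty}(q^{2};q^{2})_{\infty}}=\dfrac{(-q^{2};q^{2})_{\infty}}{(q;q)_{\infty}}$, the asserted generating function.

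The closing $q$-product simplification is routine; as in Theorem~\ref{theorem5}, the real work is the slice bookkeeping — correctly identifying the three minimal slices and their weights modulo $2$, and above all verifying that the slice flow has exactly the edges listed, that is, that adjoining a single square to a slice of given shape and weight produces precisely the displayed slices and nothing more. This is the step where the level-$2$ constraint (ruling out a shape-$(3)$ slice) and the fact that the three minimal slices have pairwise distinct shapes are essential.
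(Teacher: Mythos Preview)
Your proof is correct and follows essentially the same approach as the paper's: identify the three shapes and their minimal positive-weight slices, note the parity constraints on weights, build the slice flow, write the product for distinct slices, allow repetitions, and specialize the shape variables to $1$. The only difference is cosmetic---you label the shapes $(0),(1),(2)$ as $a,c,b$ while the paper uses $b,a,c$---and you supply a bit more justification for the slice-flow edges than the paper does.
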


\begin{proof}
For cylindric partitions with profile $c = (2, 0)$, there are exactly $ \binom{\ell+r-1}{r-1}= $ $\binom{3}{1}$ shapes. The corresponding shapes are again $(0), (1), (2)$ and we will denote these shapes by $b, a$ and $c$, respectively. Slices that have these shapes with minimum positive weight are shown below. 

\ytableausetup{centertableaux}
\ytableaushort
{\none,\none }
* {3,2}
* [*(gray)]{2,2} ~ ($aq^1$), \ytableausetup{centertableaux}
\ytableaushort
{\none,\none }
* {3,3}
* [*(gray)]{2,2}
~ ($bq^2$),
\ytableausetup{centertableaux}
\ytableaushort
{\none,\none }
* {4,2}
* [*(gray)]{2,2}  
~ ($cq^2$).

Slices with shape $a$ can only have odd weights, while slices with shapes $b$ and $c$ can only have even weights. Slice flow for the profile can be found below.
\begin{center}
\begin{tikzcd}
                                        & \color{red}{cq^2} \arrow[rd] &                                         & \color{red}{cq^4} \arrow[rd] &                                         & \color{red}{cq^6} \arrow[rd] &                                         & \color{red}{cq^8} \\
\color{red}{aq^1} \arrow[ru] \arrow[rd] &                              & \color{red}{aq^3} \arrow[ru] \arrow[rd] &                              & \color{red}{aq^5} \arrow[ru] \arrow[rd] &                              & \color{red}{aq^7} \arrow[ru] \arrow[rd] &                   \\
                                        & \color{red}{bq^2} \arrow[ru] &                                         & \color{red}{bq^4} \arrow[ru] &                                         & \color{red}{bq^6} \arrow[ru] &                                         & \color{red}{bq^8}
\end{tikzcd}\\
 {\small Table 3.2: Profile $c=(2,0)$}
 \captionlistentry[table]{Profile $c=(2,0)$}
\end{center}
As in the previous theorem, it is easy to write the generating function of the cylindric partitions with distinct slices:
$$\prod_{k \geq 0} (1 + aq^{2k+1}) (1 + bq^{2k+2}+cq^{2k+2}).$$
Now, if we allow slices to repeat, we get
$$\prod_{k \geq 0} \bigg(1 + \frac{aq^{2k+1}}{1-aq^{2k+1}}\bigg) \bigg(1 + \frac{bq^{2k+2}}{1-bq^{2k+2}}+\frac{cq^{2k+2}}{1-cq^{2k+2}}\bigg).$$
Putting $a=b=c=1$ completes the proof.
\end{proof}

Note that in the above proofs, even if we do not put $a=b=c=1$, we still have infinite products, which also count the number of shapes of slices included in cylindric partitions with the given profiles via the powers of $a, b$ and $c$. In the following section, for the profiles we worked on, this does not happen, as we deal with more complex slice flows.  

\section{Method of Weighted Words on Cylindric Partitions with Rank 2 - Level 3 and Cylindric Partitions with Rank 3 - Level 2}
\label{sec:2}
In this section, we apply the method of weighted words on cylindric partitions with rank $2$ - level $3$, and rank $3$ - level $2$.  

We start our examination with cylindric partitions having profile $c=(2,1)$, and we find an alternative expression for the generating function of cylindric partitions with the given profile.  

\begin{theorem} \label{theorem1}
Cylindric partitions with profile $c=(2,1)$ have the following generating function:
$$\bigg(\sum_{n\geq 0} \frac{q^{n^2}}{(q^4 ; q^4)_n}\bigg) \cdot \frac{(-q^2 ; q^2)_{\infty}}{(q ; q)_{\infty}}. $$
\end{theorem}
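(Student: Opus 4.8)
The plan is to realize a cylindric partition of profile $c=(2,1)$ as a multichain in the poset of slices whose Hasse diagram is the slice flow of Table 2.1, and then to count such multichains one weight level at a time. Concretely, a cylindric partition of profile $(2,1)$ is exactly a finite weakly increasing sequence of slices $\emptyset\le\sigma_{1}\le\cdots\le\sigma_{m}$ (with repetitions allowed), its size being $\sum_{i}\mathrm{wt}(\sigma_{i})$, where $\mathrm{wt}(xq^{n})=n$; moreover at each weight only one slice can occur, since the two slices of a given weight ($aq^{2k+1}$ and $bq^{2k+1}$ at odd weight, $cq^{2k+2}$ and $dq^{2k+2}$ at even weight) are incomparable. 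Hence the generating function is $\sum q^{\sum_{i}\mathrm{wt}(\sigma_{i})}$ over admissible sequences, and the whole problem is to describe the admissible ones, equivalently to understand, for a chain of distinct slices, which consecutive pairs can appear.

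The structural core is therefore to determine exactly which consecutive pairs are allowed. Chasing the order ideal and the order filter generated by a $d$-slice in the slice flow gives $\{\sigma:\sigma\le dq^{2k+2}\}=\{dq^{2k+2},bq^{2k+1}\}\cup\{\sigma:\mathrm{wt}(\sigma)\le 2k\}$ and, dually, $\{\sigma:\sigma\ge dq^{2k+2}\}=\{dq^{2k+2},bq^{2k+3}\}\cup\{\sigma:\mathrm{wt}(\sigma)\ge 2k+4\}$, while every other pair of slices of distinct weights is joined by a chain of covers. Consequently the only forbidden consecutive patterns in a multichain are an $a$-slice immediately followed by the $d$-slice one weight higher, or a $d$-slice immediately followed by the $a$-slice one weight higher — precisely the obstruction visible in Figures 2.4 and 2.5. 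Equivalently: in an admissible multichain a $d$-slice at (even) weight $e$ forbids $a$-slices at weights $e\pm1$, and there is no other restriction.

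Armed with this, I would write the generating function as a product over weight levels subject to that single non-local restriction, and split off the contribution of the shapes $b,c,d$ alone. Deleting the shape $a$ leaves exactly the slice poset of profile $(2,0)$, so $\{b,c,d\}$-only multichains are counted by $\tfrac{(-q^{2};q^{2})_{\infty}}{(q;q)_{\infty}}$ (Theorem \ref{theorem6}). Reinstating the $a$-slices and letting $A$ denote the set of odd weights that carry one, the passage from the $\{b,c,d\}$-baseline to the full count multiplies by a factor $q^{o}$ for each $o\in A$ (an $a$-slice with multiplicity $\ge1$ in place of "$b$ or empty") and by $\tfrac{1}{1+q^{e}}$ for each even $e\ge2$ with $e\in(A-1)\cup(A+1)$ (those weights may now carry only $c$). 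Thus the generating function equals
\[
\frac{(-q^{2};q^{2})_{\infty}}{(q;q)_{\infty}}\,\sum_{A}\Bigl(\prod_{o\in A}q^{o}\Bigr)\prod_{\substack{e\ \mathrm{even},\ e\ge2\\ e\in(A-1)\cup(A+1)}}\frac{1}{1+q^{e}},
\]
the sum running over all finite sets $A$ of positive odd integers.

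Finally I would evaluate the inner sum by grouping on $n=\lvert A\rvert$, the claim being that the terms with $\lvert A\rvert=n$ add up to $q^{n^{2}}/(q^{4};q^{4})_{n}$; summing over $n\ge0$ then yields the series in the statement. For $n=1$ this is a short partial-fraction telescope, $\sum_{o\ \mathrm{odd},\,o\ge3}\tfrac{q^{o}}{(1+q^{o-1})(1+q^{o+1})}=\tfrac{q^{3}}{1-q^{4}}$, which together with the $o=1$ term $\tfrac{q}{1+q^{2}}$ gives $\tfrac{q}{1-q^{4}}=\tfrac{q^{1^{2}}}{(q^{4};q^{4})_{1}}$; the general case needs the nested version of this telescoping (or an induction on $n$), with $q^{n^{2}}$ coming from the minimal admissible configuration $A=\{1,3,\dots,2n-1\}$ and $1/(q^{4};q^{4})_{n}$ from shifting the $a$-slices upward in steps of $4$. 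I expect this last evaluation to be the main obstacle; verifying the adjacency rule of the second paragraph is more bookkeeping than difficulty. Once the series form is established, Andrews' identity \eqref{A1} converts it to Borodin's infinite product \eqref{c=(2,1)}, as the introduction indicates.
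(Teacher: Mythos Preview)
Your proposal is correct and follows essentially the same route as the paper: take $S(q)=\dfrac{(-q^{2};q^{2})_{\infty}}{(q;q)_{\infty}}$ as the baseline generating function for multichains avoiding the shape $a$, then reinsert the $a$-slices and record the correction factors $q^{o}$ and $1/(1+q^{e})$ exactly as you do, and finally group by $n=\lvert A\rvert$. The paper's argument differs only in packaging: it derives $S(q)$ directly rather than by invoking the profile $(2,0)$ result, and it carries out in full the ``nested telescope'' you flag as the main obstacle, via four explicit lemmas (a partial-fraction identity, its iterate, and two inductive summation lemmas) that establish $\sum_{\lvert A\rvert=n}\bigl(\cdots\bigr)=q^{n^{2}}/(q^{4};q^{4})_{n}$ for all $n$.
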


\begin{proof}
Cylindric partitions with profile $c = (2, 1)$ have rank $r=2$ and level $\ell=3$, so for the slices of cylindric partitions with the given profile there are exactly $ \binom{\ell+r-1}{r-1}= $ $\binom{4}{1}$ shapes. The corresponding shapes are $(0), (1), (2), (3)$ and we will denote these shapes by $a, c, b$, and $d$, respectively. Slices that have these shapes with minimum positive weight are shown below. Note that the gray squares are created by the profile and have zero weight.

\ytableausetup{centertableaux}
\ytableaushort
{\none,\none }
* {3,3}
* [*(gray)]{3,2} ~ ($aq^1$), \ytableausetup{centertableaux}
\ytableaushort
{\none,\none }
* {4,2}
* [*(gray)]{3,2}
~ ($bq^1$),
\ytableausetup{centertableaux}
\ytableaushort
{\none,\none }
* {4,3}
* [*(gray)]{3,2}  
~ ($cq^2$),

\hfill

\ytableausetup{centertableaux}
\ytableaushort
{\none,\none }
* {5,2}
* [*(gray)]{3,2} ~ ($dq^2$).

As the bigger slices with a certain shape are obtained by adding the same number of weight $1$ squares to each row of the minimal slice with the given shape and the number of rows in a slice is determined by the rank, each shape has certain weights modulo $2$ for the given profile. Therefore, slices with shapes $a$ and $b$ can only have odd weights, while slices with shapes $c$ and $d$ can only have even weights. Slice flow for the given profile can be found below:
\begin{center}
    \begin{tikzcd}
\color{blue}{aq^1} \arrow[rd]           &                                         & \color{blue}{aq^3} \arrow[rd]           &                                         & \color{blue}{aq^5} \arrow[rd]           &                                         & \color{blue}{aq^7} \arrow[rd]           &                   \\
                                        & \color{red}{cq^2} \arrow[ru] \arrow[rd] &                                         & \color{red}{cq^4} \arrow[ru] \arrow[rd] &                                         & \color{red}{cq^6} \arrow[rd] \arrow[ru] &                                         & \color{red}{cq^8} \\
\color{red}{bq^1} \arrow[ru] \arrow[rd] &                                         & \color{red}{bq^3} \arrow[ru] \arrow[rd] &                                         & \color{red}{bq^5} \arrow[ru] \arrow[rd] &                                         & \color{red}{bq^7} \arrow[ru] \arrow[rd] &                   \\
                                        & \color{red}{dq^2} \arrow[ru]            &                                         & \color{red}{dq^4} \arrow[ru]            &                                         & \color{red}{dq^6} \arrow[ru]            &                                         & \color{red}{dq^8}
\end{tikzcd}\\
 {\small Table 4.1: Profile $c=(2,1)$}
 \captionlistentry[table]{Profile $c=(2,1)$}
\end{center}

Investigating the slices for the given profile, it's straightforward to find that cylindric partitions with distinct slices including only the slices with shapes $b$, $c$ and $d$ (all shapes except $a$) have the following generating function:
$$\prod_{k \geq 0} (1 + bq^{2k+1}) (1 + cq^{2k+2} + dq^{2k+2}).$$

Whenever a slice with shape $a$ enters the picture, we know that we must be careful with the slices with shape $d$, since just before and just after a slice $aq^{2k+1}$, we cannot put slices $dq^{2k}$ and $dq^{2k+2}$. Considering this restriction, the possible arrangements of distinct slices with maximum weight $6$ ($cq^6$ or $dq^6$) are as follows:

$(1+bq^1)(1+cq^2+dq^2)(1+bq^3)(1+cq^4+dq^4)(1+bq^5)(1+cq^6+dq^6)$

$+ \mathbf{(aq^1)(1+cq^2)}(1+bq^3)(1+cq^4+dq^4)(1+bq^5)(1+cq^6+dq^6) $ 

$+ (1+bq^1)\mathbf{(1+cq^2)(aq^3)(1+cq^4)}(1+bq^5)(1+cq^6+dq^6) $

$+ (1+bq^1)(1+cq^2+dq^2)(1+bq^3)\mathbf{(1+cq^4)(aq^5)(1+cq^6)} $ 

$+ \mathbf{(aq^1)(1+cq^2)(aq^3)(1+cq^4)}(1+bq^5)(1+cq^6+dq^6) $

$+ \mathbf{(aq^1)(1+cq^2)}(1+bq^3)\mathbf{(1+cq^4)(aq^5)(1+cq^6)} $

$+ (1+bq^1)\mathbf{(1+cq^2)(aq^3)(1+cq^4)(aq^5)(1+cq^6)} $ 

$+ \mathbf{(aq^1)(1+cq^2)(aq^3)(1+cq^4)(aq^5)(1+cq^6)}$. 

Since the maximum weight is restricted by $6$ above, the possible weights of slices with shape $a$ are $1, 3,$ and $5$. There is a one-to-one correspondence between the subsets of the set $\{ aq^1, aq^3, aq^5\}$, and the terms of the sum above, each line representing one term. For example, the first line of the sum corresponds to the empty set so that no slice with shape $a$ is included, while the second line of the sum corresponds to the subset $\{ aq^1\}$ so that the slice $aq^1$ is included, and the seventh line of the sum corresponds to the subset $\{ aq^3, aq^5\}$ so that slices $aq^3$ and $aq^5$ are included. Therefore, the sum above has eight terms. Now, if we allow slices to repeat, this sum becomes:

$(1+\frac{bq^1}{1-bq^1})(1+\frac{cq^2}{1-cq^2}+\frac{dq^2}{1-dq^2})(1+\frac{bq^3}{1-bq^3})(1+\frac{cq^4}{1-cq^4}+\frac{dq^4}{1-dq^4})(1+\frac{bq^5}{1-bq^5})(1+\frac{cq^6}{1-cq^6}+\frac{dq^6}{1-dq^6})$

$+ \mathbf{(\frac{aq^1}{1-aq^1})(1+\frac{cq^2}{1-cq^2})}(1+\frac{bq^3}{1-bq^3})(1+\frac{cq^4}{1-cq^4}+\frac{dq^4}{1-dq^4})(1+\frac{bq^5}{1-bq^5})(1+\frac{cq^6}{1-cq^6}+\frac{dq^6}{1-dq^6}) $ 

$+ (1+\frac{bq^1}{1-bq^1})\mathbf{(1+\frac{cq^2}{1-cq^2})(\frac{aq^3}{1-aq^3})(1+\frac{cq^4}{1-cq^4})}(1+\frac{bq^5}{1-bq^5})(1+\frac{cq^6}{1-cq^6}+\frac{dq^6}{1-dq^6}) $

$+ (1+\frac{bq^1}{1-bq^1})(1+\frac{cq^2}{1-cq^2}+\frac{dq^2}{1-dq^2})(1+\frac{bq^3}{1-bq^3})\mathbf{(1+\frac{cq^4}{1-cq^4})(\frac{aq^5}{1-aq^5})(1+\frac{cq^6}{1-cq^6})} $ 

$+ \mathbf{(\frac{aq^1}{1-aq^1})(1+\frac{cq^2}{1-cq^2})(\frac{aq^3}{1-aq^3})(1+\frac{cq^4}{1-cq^4})}(1+\frac{bq^5}{1-bq^5})(1+\frac{cq^6}{1-cq^6}+\frac{dq^6}{1-dq^6}) $

$+ \mathbf{(\frac{aq^1}{1-aq^1})(1+\frac{cq^2}{1-cq^2})}(1+\frac{bq^3}{1-bq^3})\mathbf{(1+\frac{cq^4}{1-cq^4})(\frac{aq^5}{1-aq^5})(1+\frac{cq^6}{1-cq^6})} $

$+ (1+\frac{bq^1}{1-bq^1})\mathbf{(1+\frac{cq^2}{1-cq^2})(\frac{aq^3}{1-aq^3})(1+\frac{cq^4}{1-cq^4})(\frac{aq^5}{1-aq^5})(1+\frac{cq^6}{1-cq^6})} $ 

$+ \mathbf{(\frac{aq^1}{1-aq^1})(1+\frac{cq^2}{1-cq^2})(\frac{aq^3}{1-aq^3})(1+\frac{cq^4}{1-cq^4})(\frac{aq^5}{1-aq^5})(1+\frac{cq^6}{1-cq^6})}$. 

Then, if we put $a=b=c=d=1$, we obtain the following sum:

$(\frac{1}{1-q^1})(\frac{1 + q^2}{1 - q^2})(\frac{1}{1 - q^3})(\frac{1+q^4}{1-q^4})(\frac{1}{1-q^5})(\frac{1+q^6}{1-q^6})$ 

$+ \mathbf{(\frac{q^1}{1-q^1})(\frac{1}{1-q^2})}(\frac{1}{1-q^3})(\frac{1+q^4}{1-q^4})(\frac{1}{1-q^5})(\frac{1+q^6}{1-q^6}) $ 

$+ (\frac{1}{1-q^1})\mathbf{(\frac{1}{1-q^2})(\frac{q^3}{1-q^3})(\frac{1}{1-q^4})}(\frac{1}{1-q^5})(\frac{1+q^6}{1-q^6}) $ 

$+ (\frac{1}{1-q^1})(\frac{1+q^2}{1-q^2})(\frac{1}{1-q^3})\mathbf{\frac{1}{1-q^4}(\frac{q^5}{1-q^5})(\frac{1}{1-q^6})} $ 

$+ \mathbf{(\frac{q^1}{1-q^1})(\frac{1}{1-q^2})(\frac{q^3}{1-q^3})(\frac{1}{1-q^4})}(\frac{1}{1-q^5})(\frac{1+q^6}{1-q^6}) $ 

$+ \mathbf{(\frac{q^1}{1-q^1})(\frac{1}{1-q^2})}(\frac{1}{1-q^3})\mathbf{(\frac{1}{1-q^4})(\frac{q^5}{1-q^5})(\frac{1}{1-q^6})} $ 

$+ (\frac{1}{1-q^1})\mathbf{(\frac{1}{1-q^2})(\frac{q^3}{1-q^3})(\frac{1}{1-q^4})(\frac{q^5}{1-q^5})(\frac{1}{1-q^6})} $ 

$+ \mathbf{(\frac{q^1}{1-q^1})(\frac{1}{1-q^2})(\frac{q^3}{1-q^3})(\frac{1}{1-q^4})(\frac{q^5}{1-q^5})(\frac{1}{1-q^6})}$. 

Let us denote the first term of this new sum above by $P(q)$, that is,
$$P(q):= (\frac{1}{1-q^1})(\frac{1 + q^2}{1 - q^2})(\frac{1}{1 - q^3})(\frac{1+q^4}{1-q^4})(\frac{1}{1-q^5})(\frac{1+q^6}{1-q^6}).$$ 
Then, how we get the other terms of the sum is as follows:

$\mathbf{(\frac{q^1}{1-q^1})(\frac{1}{1-q^2})}(\frac{1}{1-q^3})(\frac{1+q^4}{1-q^4})(\frac{1}{1-q^5})(\frac{1+q^6}{1-q^6}) = P(q)\cdot \frac{q^1}{1+q^2} $ 

$(\frac{1}{1-q^1})\mathbf{(\frac{1}{1-q^2})(\frac{q^3}{1-q^3})(\frac{1}{1-q^4})}(\frac{1}{1-q^5})(\frac{1+q^6}{1-q^6})= P(q)\cdot \frac{q^3}{(1+q^2)(1+q^4)}$ 

$(\frac{1}{1-q^1})(\frac{1+q^2}{1-q^2})(\frac{1}{1-q^3})\mathbf{\frac{1}{1-q^4}(\frac{q^5}{1-q^5})(\frac{1}{1-q^6})}= P(q)\cdot \frac{q^5}{(1+q^4)(1+q^6)}$ 

$\mathbf{(\frac{q^1}{1-q^1})(\frac{1}{1-q^2})(\frac{q^3}{1-q^3})(\frac{1}{1-q^4})}(\frac{1}{1-q^5})(\frac{1+q^6}{1-q^6})= P(q) \cdot \frac{q^1 \cdot q^3}{(1+q^2)(1+q^4)}$ 

$\mathbf{(\frac{q^1}{1-q^1})(\frac{1}{1-q^2})}(\frac{1}{1-q^3})\mathbf{(\frac{1}{1-q^4})(\frac{q^5}{1-q^5})(\frac{1}{1-q^6})}= P(q) \cdot \frac{q^1 \cdot q^5}{(1+q^2)(1+q^4)(1+q^6)}$ 

$(\frac{1}{1-q^1})\mathbf{(\frac{1}{1-q^2})(\frac{q^3}{1-q^3})(\frac{1}{1-q^4})(\frac{q^5}{1-q^5})(\frac{1}{1-q^6})}= P(q) \cdot \frac{q^3 \cdot q^5}{(1+q^2)(1+q^4)(1+q^6)}$

$\mathbf{(\frac{q^1}{1-q^1})(\frac{1}{1-q^2})(\frac{q^3}{1-q^3})(\frac{1}{1-q^4})(\frac{q^5}{1-q^5})(\frac{1}{1-q^6})}= P(q) \cdot \frac{q^1 \cdot q^3 \cdot q^5}{(1+q^2)(1+q^4)(1+q^6)}$. 

Now, we will generalize this idea.

Cylindric partitions with the given profile not including a slice with shape $a$ have the following generating function:
\begin{align} \label{n=0}
S(q):= \frac{\prod_{k \geq 1} (1 + q^{2k})}{\prod_{k \geq 1} (1-q^k) }= \frac{(-q^2;q^2)_{\infty}}{(q;q)_{\infty}}. 
\end{align}
Cylindric partitions with the given profile including only one size of slice with shape $a$, that is, including $aq^{2k+1}$ for some $k \in \mathbb{N}$, have the following generating function:
\begin{align} \label{n=1}
\bigg( \frac{q^1}{1+q^2} + \sum_{k\geq1} \frac {q^{2k+1}}{(1+q^{2k}) (1+q^{2k+2})}\bigg)\cdot S(q). 
\end{align}
Cylindric partitions with the given profile including exactly two sizes of slices with shape $a$ have the following generating function:
\begin{align} \label{n=2}
&\bigg(\frac{q^1 \cdot q^3}{(1+q^2) (1+q^4)} + \sum_{k\geq1} \frac {q^{1} \cdot q^{2k+3}}{ (1+q^2) (1+q^{2k+2}) (1+q^{2k+4})} + \sum_{k\geq1} \frac {q^{2k+1} \cdot q^{2k+3}}{ (1+q^{2k}) (1+q^{2k+2}) (1+q^{2k+4})} \\
\nonumber
&+ \sum_{n\geq1}\sum_{k\geq1} \frac {q^{2k+1} \cdot q^{2k+2n+3}}{ (1+q^{2k}) (1+q^{2k+2}) (1+q^{2k+2n+2}) (1+q^{2k+2n+4})} \bigg) \cdot S(q).
\end{align}

Cylindric partitions with the given profile including exactly three sizes of slices with shape $a$ have the following generating function:

\begin{align} \label{n=3}
&S(q) \cdot \bigg(\frac{q^{1}\cdot q^{3} \cdot q^{5}}{(1+q^2) (1+q^4) (1+q^6)} + 
\sum_{k\geq 1}^{\infty}\frac{q^{1} \cdot q^{3} \cdot q^{2k+5}}{(1+q^2) (1+q^4) (1+q^{2k+4}) (1+q^{2k+6})} \\
\nonumber
&+ \sum_{k\geq1} \frac {q^{1} \cdot q^{2k+3} \cdot q^{2k+5}}{ (1+q^2) (1+q^{2k+2}) (1+q^{2k+4}) (1+q^{2k+6})} \\
\nonumber
& + \sum_{n\geq 1} \sum_{k\geq1} \frac {q^{1} \cdot q^{2k+3} \cdot q^{2k+2n+5}}{ (1+q^2) (1+q^{2k+2}) (1+q^{2k+4}) (1+q^{2k+2n+4}) (1+q^{2k+2n+6})} \\
\nonumber
& + \sum_{k\geq1} \frac {q^{2k+1} \cdot q^{2k+3} \cdot q^{2k+5}}{ (1+q^{2k}) (1+q^{2k+2}) (1+q^{2k+4}) (1+q^{2k+6})} \\
\nonumber
& + \sum_{k\geq1} \frac {q^{2k+1} \cdot q^{2k+3} \cdot q^{2k+2n+5} }{ (1+q^{2k}) (1+q^{2k+2}) (1+q^{2k+4}) (1+q^{2k+2n+4}) (1+q^{2k+2n+6})} 
\end{align}

\begin{align}
\nonumber
& + \sum_{n\geq1}\sum_{k\geq1} \frac {q^{2k+1} \cdot q^{2k+2n+3} \cdot q^{2k+2n+5}}{ (1+q^{2k}) (1+q^{2k+2}) (1+q^{2k+2n+2}) (1+q^{2k+2n+4}) (1+q^{2k+2n+6})} \\
\nonumber
& + \sum_{m, n, k\geq1} \frac {q^{2k+1} \cdot q^{2k+2n+3} \cdot q^{2k+2n+2m+5}}{ (1+q^{2k}) (1+q^{2k+2}) (1+q^{2k+2n+2}) (1+q^{2k+2n+4}) (1+q^{2k+2n+2m+4}) (1+q^{2k+2n+2m+6})} \bigg). 
\end{align}
As the number of distinct sizes of slices with shape $a$ increases, our expression gets larger and larger and more complicated. Fortunately, there is a certain pattern behind the scenes. Now, let us prove the following lemmas which will help us to compute the above sums:

\begin{lemma} \label{lem1}
For any integer $k \geq 0$, we have the following identity:
\begin{equation} \label{eq1}
\frac{q^{2k+1}}{(1+q^{2k}) (1+q^{2k+2})}=\frac{q}{1-q^2} \cdot \bigg( \frac{-1}{1+q^{2k}} + \frac{1}{1+ q^{2k+2}} \bigg).    
\end{equation}
For any integers $k \geq 0$, $n \geq 2$, we have the following identity,
\begin{multline} \label{eq2}
\frac{q^{2k+1} \cdot q^{2k+3} \ldots q^{2k+2n-1}}{(1+q^{2k}) (1+q^{2k+2}) \ldots (1+q^{2k+2n})}=  \\
\frac{q}{1-q^{2n}} \bigg( \frac{-q^{2k+3}\cdot q^{2k+5} \ldots q^{2k+2n-1}}{(1+q^{2k})(1+q^{2k+2})\ldots(1+q^{2k+2n-2})} +
\frac{q^{2k+3}\cdot q^{2k+5} \ldots q^{2k+2n-1} }{(1+q^{2k+2})(1+q^{2k+4})\ldots(1+q^{2k+2n})} \bigg).
\end{multline}
\end{lemma}

\begin{proof}
Identity \eqref{eq1} easily can be verified. For Identity \eqref{eq2} we will proceed by induction on $n$. The base case $n=2$ can be seen by verification. Now, supposing Identity \eqref{eq2} holds for some $n \geq 2$, we will prove it for $n+1$, that is, we will prove the following:
\begin{multline} \label{eq3}
\frac{q^{2k+1} \cdot q^{2k+3} \ldots q^{2k+2n+1}}{(1+q^{2k}) (1+q^{2k+2}) \ldots (1+q^{2k+2n+2})} = \\
\frac{q}{1-q^{2n+2}} \bigg( \frac{-q^{2k+3}\cdot q^{2k+5} \ldots q^{2k+2n+1}}{(1+q^{2k})(1+q^{2k+2})\ldots(1+q^{2k+2n})} +
\frac{q^{2k+3}\cdot q^{2k+5} \ldots q^{2k+2n+1} }{(1+q^{2k+2})(1+q^{2k+4})\ldots(1+q^{2k+2n+2})} \bigg).
\end{multline}
Starting with the left-hand side of \eqref{eq3}, and using the induction hypothesis, we get
\begin{multline} \label{eq4}
\frac{q^{2k+1} \cdot q^{2k+3} \ldots q^{2k+2n+1}}{(1+q^{2k}) (1+q^{2k+2}) \ldots (1+q^{2k+2n+2})} =  \frac{q^{2k+2n+1}}{1+q^{2k+2n+2}} \times \\
\frac{q}{1-q^{2n}} \bigg( \frac{-q^{2k+3}\cdot q^{2k+5} \ldots q^{2k+2n-1}}{(1+q^{2k})(1+q^{2k+2})\ldots(1+q^{2k+2n-2})} +
\frac{q^{2k+3}\cdot q^{2k+5} \ldots q^{2k+2n-1} }{(1+q^{2k+2})(1+q^{2k+4})\ldots(1+q^{2k+2n})} \bigg) .
\end{multline}
Now, one can easily verify that the right-hand side of \eqref{eq4} is equal to the right-hand side of \eqref{eq3}, and this concludes the proof. 
\end{proof}

\begin{lemma} \label{lem2}
For any integer $m \geq 1$, the following holds.
\begin{equation}\label{eq6}
\sum_{k \geq 1} \frac{q^{2k+1} \cdot q^{2k+3} \ldots q^{2k+2m-1}}{(1+q^{2k}) (1+q^{2k+2}) \ldots (1+q^{2k+2m})}=\frac{q^{2m}}{1-q^{2m}} \cdot \frac{q^1 q^3 \ldots q^{2m-1}}{(1+q^2) (1+q^4) \ldots (1+q^{2m})}. 
 \end{equation}
\end{lemma}
\begin{proof}
We will prove the lemma by induction on $m$.
The base case $m=1$ can be easily verified, when we use Identity \eqref{eq1}. Now, supposing that \eqref{eq6} holds for $m \geq 1$, we will show that it also holds for $(m+1)$. Using Lemma \ref{lem1}, rearranging the expression, changing the index and then applying the induction hypothesis, we get what we want:
\begin{align*}
&\sum_{k \geq 1} \frac{q^{2k+1} \cdot q^{2k+3} \ldots q^{2k+2m+1}}{(1+q^{2k}) (1+q^{2k+2}) \ldots (1+q^{2k+2m+2})}=\sum_{k \geq 1} \frac{q}{1-q^{2m+2}} \times \\
&\bigg( \frac{-q^{2k+3} \cdot q^{2k+5} \ldots q^{2k+2m+1}}{(1+q^{2k})(1+q^{2k+2})\ldots(1+q^{2k+2m})} + 
\frac{q^{2k+3}\cdot q^{2k+5} \ldots q^{2k+2m+1} }{(1+q^{2k+2})(1+q^{2k+4})\ldots(1+q^{2k+2m+2})} \bigg)=\frac{q}{1-q^{2m+2}} \times \\
& \bigg( \sum_{k \geq 1} \frac{-q^{2k+3} \cdot q^{2k+5} \ldots q^{2k+2m+1}}{(1+q^{2k})(1+q^{2k+2})\ldots(1+q^{2k+2m})} + 
\sum_{k \geq 1} \frac{q^{2k+3}\cdot q^{2k+5} \ldots q^{2k+2m+1} }{(1+q^{2k+2})(1+q^{2k+4})\ldots(1+q^{2k+2m+2})} \bigg) \\
& =\frac{q}{1-q^{2m+2}} \cdot \bigg( -q^{2m} \cdot \sum_{k \geq 1} \frac{q^{2k+1}\cdot q^{2k+3} \ldots q^{2k+2m-1} }{(1+q^{2k})(1+q^{2k+2})\ldots(1+q^{2k+2m})} \\
&+ \sum_{k \geq 2} \frac{q^{2k+1}\cdot q^{2k+3} \ldots q^{2k+2m-1} }{(1+q^{2k})(1+q^{2k+2})\ldots(1+q^{2k+2m})}\bigg)\\
& =\frac{q}{1-q^{2m+2}} \cdot \bigg( -q^{2m} \cdot 
\frac{q^{2m}}{1-q^{2m}} \cdot \frac{q^1 q^3 \ldots q^{2m-1}}{(1+q^2) (1+q^4) \ldots (1+q^{2m})}  \\
& + \frac{q^{2m}}{1-q^{2m}} \cdot \frac{q^1 q^3 \ldots q^{2m-1}}{(1+q^2) (1+q^4) \ldots (1+q^{2m})} - \frac{q^3 q^5 \ldots q^{2m+1}}{(1+q^2)(1+q^4)\ldots (1+q^{2m+2})}\bigg)\\
&=\frac{q^{2m+2}}{1-q^{2m+2}} \cdot \frac{q^1 q^3 \ldots q^{2m+1}}{(1+q^2)(1+q^4)\ldots(1+q^{2m+2})}. 
\end{align*}
\end{proof}
\begin{lemma} \label{lem3}
For any integers $m_1, m_2 \geq 1$, we have
\begin{align} \label{eq7}
&\sum_{k_2 \geq 1} \sum_{k_1 \geq 1} \frac{q^{2k_1+1}  q^{2k_1+3} \ldots q^{2k_1+2m_1-1} q^{2k_1+2k_2+2m_1+1} q^{2k_1+2k_2+2m_1+3} \ldots q^{2k_1+2k_2+2m_1+2m_2-1}}{(1+q^{2k_1}) (1+q^{2k_1+2}) \ldots (1+q^{2k_1+2m_1})(1+q^{2k_1+2k_2+2m_1})\ldots (1+q^{2k_1+2k_2+2m_1+2m_2})}\\
\nonumber
&=\frac{q^{2(m_1 + m_2)}  q^{2m_2} q^1 q^3 q^5 \ldots q^{2(m_1 + m_2)-1}} {(1-q^{2(m_1 + m_2)})(1-q^{2m_2})(1+q^2)(1+q^4)(1+q^6) \ldots (1+q^{2(m_1+m_2)})}.
\end{align}
\end{lemma}

\begin{proof}
We will prove Identity \eqref{eq7}, by proving the following identity by induction on $m_2$. 
\begin{align} \label{eq8}
&\sum_{k_2 \geq 1} \frac{q^{2k_1+2k_2+2m_1+1} q^{2k_1+2k_2+2m_1+3} \ldots q^{2k_1+2k_2+2m_1+2m_2-1}}{(1+q^{2k_1+2k_2+2m_1})\ldots (1+q^{2k_1+2k_2+2m_1+2m_2})} \\
\nonumber
&=\frac{q^{2m_2} q^{2k_1+2m_1+1} q^{2k_1+2m_1+3} \ldots q^{2k_1+2m_1+2m_2-1}} {(1-q^{2m_2})(1+q^{2k_1+2m_1+2})(1+q^{2k_1+2m_1+4}) \ldots (1+q^{2k_1+2m_1+2m_2})}.
\end{align}
Let us start with the base case $m_2=1$. If we take $k=k_1+k_2+m_1$ in \eqref{eq1}, and compute the sum over $k_2$, we get
\begin{align}
&\sum_{k_2 \geq 1} \frac{q^{2k_1+2k_2+2m_1+1}}{(1+q^{2k_1+2k_2+2m_1})(1+q^{2k_1+2k_2+2m_1+2})}\\
\nonumber
&=\sum_{k_2 \geq 1} \frac{q}{1-q^2} \cdot \bigg( \frac{-1}{1+q^{2k_1+2k_2+2m_1}} + \frac{1}{1+ q^{2k_1+2k_2+2m_1+2}}\bigg)= \frac{q^2}{1-q^2} \frac{q^{2k_1+2m_1+1}}{(1+q^{2k_1+2m_1+2})}.
\end{align} 
Now, suppose that \eqref{eq8} holds for $m_2 \geq 1$. We will prove it for $m_2+1$. Note that, after using Lemma \ref{lem1}, doing some computations and using the induction hypothesis gives what we want.
\begin{align}
&\sum_{k_2 \geq 1} \frac{q^{2k_1+2k_2+2m_1+1} q^{2k_1+2k_2+2m_1+3} \ldots q^{2k_1+2k_2+2m_1+2m_2+1}}{(1+q^{2k_1+2k_2+2m_1})(1+q^{2k_1+2k_2+2m_1+2}) \ldots (1+q^{2k_1+2k_2+2m_1+2m_2+2})} \\
\nonumber
&=\frac{q}{1-q^{2m_2+2}}\bigg(\sum_{k_2 \geq 1} \frac{-q^{2k_1+2k_2+2m_1+3} q^{2k_1+2k_2+2m_1+5} \ldots q^{2k_1+2k_2+2m_1+2m_2+1}}{(1+q^{2k_1+2k_2+2m_1})(1+q^{2k_1+2k_2+2m_1+2}) \ldots (1+q^{2k_1+2k_2+2m_1+2m_2})} \\
\nonumber
&+\sum_{k_2 \geq 1} \frac{q^{2k_1+2k_2+2m_1+3} q^{2k_1+2k_2+2m_1+5} \ldots q^{2k_1+2k_2+2m_1+2m_2+1}}{(1+q^{2k_1+2k_2+2m_1+2})(1+q^{2k_1+2k_2+2m_1+4}) \ldots (1+q^{2k_1+2k_2+2m_1+2m_2+2})}\bigg)\\
\nonumber
&=\frac{q}{1-q^{2m_2+2}} \bigg(-q^{2m_2} \cdot \sum_{k_2 \geq 1} \frac{q^{2k_1+2k_2+2m_1+1} q^{2k_1+2k_2+2m_1+3} \ldots q^{2k_1+2k_2+2m_1+2m_2-1}}{(1+q^{2k_1+2k_2+2m_1})\ldots (1+q^{2k_1+2k_2+2m_1+2m_2})} + \\
\nonumber
&\sum_{k_2 \geq 1} \frac{q^{2k_1+2k_2+2m_1+1} q^{2k_1+2k_2+2m_1+3} \ldots q^{2k_1+2k_2+2m_1+2m_2-1}}{(1+q^{2k_1+2k_2+2m_1})\ldots (1+q^{2k_1+2k_2+2m_1+2m_2})}\\
\nonumber
&- \frac{q^{2k_1+2m_1+3}q^{2k_1+2m_1+5}\ldots q^{2k_1+2m_1+2m_2+1}}{(1+q^{2k_1+2m_1+2})(1+q^{2k_1+2m_1+4})\ldots (1+q^{2k_1+2m_1+2m_2+2})}\bigg)\\
\nonumber
&=\frac{q}{1-q^{2m_2+2}} \bigg(\frac{q^{2m_2} q^{2k_1+2m_1+1} q^{2k_1+2m_1+3} \ldots q^{2k_1+2m_1+2m_2-1}} {(1+q^{2k_1+2m_1+2})(1+q^{2k_1+2m_1+4}) \ldots (1+q^{2k_1+2m_1+2m_2})} \\
\nonumber
&-\frac{q^{2k_1+2m_1+3}q^{2k_1+2m_1+5}\ldots q^{2k_1+2m_1+2m_2+1}}{(1+q^{2k_1+2m_1+2})(1+q^{2k_1+2m_1+4})\ldots (1+q^{2k_1+2m_1+2m_2+2})} \bigg)\\
\nonumber
&=\frac{q^{2m_2+2} q^{2k_1+2m_1+1} q^{2k_1+2m_1+3} \ldots q^{2k_1+2m_1+2m_2+1}} {(1-q^{2m_2+2})(1+q^{2k_1+2m_1+2})(1+q^{2k_1+2m_1+4}) \ldots (1+q^{2k_1+2m_1+2m_2+2})}.
\end{align}
Now, using \eqref{eq8} in the left-hand side of \eqref{eq7}, and then using Lemma \ref{lem2} completes the proof. 
\end{proof}

\begin{lemma} \label{lem4}
For any integer $m_i \geq 1$ where $i=1,2, \ldots, n$, the following identity holds:
\begin{align} \label{biggestsum}
&\sum_{k_n \geq 1} \ldots \sum_{k_2 \geq 1} \sum_{k_1 \geq 1} \bigg(\frac{q^{2k_1+1} q^{2k_1+3} \ldots q^{2k_1+2m_1-1}}{(1+q^{2k_1})(1+q^{2k_1+2})\ldots (1+q^{2k_1+2m_1})}\bigg) \\
\nonumber
&\bigg( \frac{q^{2k_1+2k_2+(2m_1+1)}}{(1+q^{2k_1+2k_2+2m_1})}
\frac{q^{2k_1+2k_2+(2m_1+3)} \ldots q^{2k_1+2k_2+(2m_1+2m_2-1)}}{(1+q^{2k_1+2k_2+2m_1+2})\ldots (1+q^{2k_1+2k_2+2m_1+2m_2})} \bigg) \\
\nonumber
&\bigg( \frac{q^{2k_1+2k_2+2k_3+(2m_1+2m_2+1)}}{(1+q^{2k_1+2k_2+2k_3+2m_1+2m_2})}
\frac{q^{2k_1+2k_2+2k_3+(2m_1+2m_2+3)} \ldots q^{2k_1+2k_2+2k_3+(2m_1+2m_2+2m_3-1)}}{(1+q^{2k_1+2k_2+2k_3+2m_1+2m_2+2})\ldots (1+q^{2k_1+2k_2+2k_3+2m_1+2m_2+2m_3})} \bigg) \\
\nonumber
&\ldots \bigg(\frac{ q^{2k_1+2k_2+\ldots+2k_n+(2m_1+2m_2+\ldots+2m_{n-1}+1)} q^{2k_1+2k_2+\ldots+2k_n+(2m_1+2m_2+\ldots+2m_{n-1}+3)}}{ (1+q^{2k_1+2k_2+\ldots+2k_n+(2m_1+2m_2+\ldots+2m_{n-1})})(1+q^{2k_1+2k_2+\ldots+2k_n+(2m_1+2m_2+\ldots+2m_{n-1}+2)})}\\
\nonumber
&\frac{\ldots q^{2k_1+2k_2+\ldots+2k_n+(2m_1+2m_2+\ldots+2m_{n}-1)}}{\ldots (1+q^{2k_1+2k_2+\ldots+2k_n+(2m_1+2m_2+\ldots+2m_{n-1}+2m_n)})}\bigg)\\
\nonumber
&\textbf{=}\frac{q^1 q^3 \ldots q^{2m_1+2m_2+ \ldots +2m_n -1}}{(1+q^2)(1+q^4)\ldots (1+q^{2m_1+2m_2+ \ldots +2m_n })} \times \\
\nonumber
&\frac{q^{2(m_1+m_2+\ldots+m_n)}}{(1-q^{2(m_1+m_2+\ldots+m_n)})} 
\frac{q^{2(m_2+m_3+\ldots+m_n)}}{(1-q^{2(m_2+m_3+\ldots+m_n)})}
\cdots \frac{q^{2(m_{n-1}+m_n)}}{(1-q^{2(m_{n-1}+m_n)})} \frac{q^{2m_n}}{(1-q^{2m_n})}. 
\end{align}
\end{lemma}

\begin{proof}
This lemma is a generalization of the previous two lemmata, and the proof will be done by induction on $n$. The base case $n=1$ is Lemma \ref{lem2}, while the case $n=2$ is Lemma \ref{lem3}. Let us denote the left-hand side of \eqref{biggestsum} by $B$. Then if we consider \eqref{biggestsum} with $(n+1)$ indices, it is the following expression:

\begin{multline}
B \cdot \sum_{k_{n+1}\geq 1} \bigg(\frac{ q^{2k_1+2k_2+\ldots+2k_{n+1}+(2m_1+2m_2+\ldots+2m_{n}+1)} }{ (1+q^{2k_1+2k_2+\ldots+2k_{n+1}+(2m_1+2m_2+\ldots+2m_{n})})}\times \\
\frac{q^{2k_1+2k_2+\ldots+2k_{n+1}+(2m_1+2m_2+\ldots+2m_{n}+3)} \ldots q^{2k_1+2k_2+\ldots+2k_{n+1}+(2m_1+2m_2+\ldots+2m_{n+1}-1)}}{(1+q^{2k_1+2k_2+\ldots+2k_{n+1}+(2m_1+2m_2+\ldots+2m_{n}+2)})\ldots (1+q^{2k_1+2k_2+\ldots+2k_{n+1}+(2m_1+2m_2+\ldots+2m_{n+1})})}\bigg).
\end{multline}
Now, in \eqref{eq8}, first putting $k_{n+1}$ in place of $k_2$ and $m_{n+1}$ in place of $m_2$, then changing $k_1$ by $(k_1+k_2+\ldots+k_n)$, and changing $m_1$ by $(m_1+m_2+\ldots+m_n)$ we get the sum over $k_{n+1}$ above. Using Identity \eqref{eq8} for the sum over $k_{n+1}$ and then applying the induction hypothesis, we get what we want.
\end{proof}
Now, let us again concentrate on the number of distinct sizes of the slices with shape '$a$' included in the generating function of cylindric partitions with the given profile, and call it $n$. Using the lemmas we proved, we obtain the following:

When $n=0$, we have the generating function $S(q)$. Please recall \eqref{n=0}.

When $n=1$, we will either have a slice with shape $a$ having size $1$ or $(2k+1$) for some $k \geq 1$. So, for size $1$, we have the following generating function, 
$$\frac{q^1}{(1+q^2)}\cdot S(q),$$
while for the odd sizes $\geq3$, we have the following generating function,
$$\frac{q^1}{(1+q^2)}\cdot \frac{q^2}{(1-q^2)} \cdot S(q).$$
Summing these generating functions gives the generating function \eqref{n=1} for case $n=1$:
$$\frac{q^1}{(1+q^2)}\cdot \frac{1}{(1-q^2)}\cdot S(q)=\frac{q^{1^2}}{(q^4;q^4)_1}\cdot S(q).$$
When $n=2$, the possible sizes are 

\noindent $1, ~ 3 $ \\
$1, ~ 2k_1+3 $ for $ ~ k_1 \geq 1$\\
$2k_2+1, ~ 2k_2+3 $ for $ ~ k_2 \geq 1$\\ 
$2k_2+1, ~ 2k_2+2k_1+3 $ for $ ~ k_1, k_2 \geq 1$.

Note that whenever a new index $k_i$ comes into play, we make the distance greater than $2$, that is, while the distance between $1$ and $3$ is $2$, the distance between $1$ and $2k_1+3$ is greater than $2$. Similarly, the distance between $2k_2+1$ and $2k_2+3$ is $2$, but the distance between $2k_2+1$ and $2k_1+2k_2+3$ is greater than $2$. Using the lemmas we proved, one can directly see that we have the following generating functions for each of the following cases separated by the sizes (weights) of slices with shape $a$:

\noindent Sizes $1$ and $3$:
$$ \frac{q^1 q^3}{(1+q^2)(1+q^4)}\cdot S(q).$$ 
Sizes $1$ and $2k_1+3$ for all $k_1\geq1$:
$$\frac{q^1 q^3}{(1+q^2)(1+q^4)} \cdot \frac{q^2}{(1-q^2)} \cdot S(q).$$
Sizes $2k_2+1$ and $2k_2+3$ for all $k_2\geq1$:
$$\frac{q^1 q^3}{(1+q^2)(1+q^4)} \cdot \frac{q^4}{(1-q^4)} \cdot S(q).$$
Sizes $2k_2+1$ and $2k_2+2k_1+3$ for all $k_1, k_2\geq1$:
$$\frac{q^1 q^3}{(1+q^2)(1+q^4)} \cdot \frac{q^2}{(1-q^2)} \cdot \frac{q^4}{(1-q^4)}\cdot S(q).$$
Summing these generating functions, we get the following generating function \eqref{n=2} for the case $n=2$:
$$ \frac{q^1 q^3}{(1+q^2)(1+q^4)} \cdot \frac{1}{(1-q^2)} \cdot \frac{1}{(1-q^4)} \cdot S(q)= \frac{q^{2^2}}{(q^4; q^4)_2} \cdot S(q).$$
When $n=3$, the possible sizes and the corresponding generating functions are as follows:

\noindent Sizes $1$, $3$, $5$:
$$\frac{q^1 q^3 q^5}{(1+q^2)(1+q^4)(1+q^6)} \cdot S(q).$$ 
Sizes $1$, $3$, $2k_1+5$ for all $k_1\geq1$:
$$\frac{q^1 q^3 q^5}{(1+q^2)(1+q^4)(1+q^6)} \cdot \frac{q^2}{(1-q^2)} \cdot S(q).$$
Sizes $1,~ 2k_2+3, ~ 2k_2+5$ for all $k_2\geq1$:
$$\frac{q^1 q^3 q^5}{(1+q^2)(1+q^4)(1+q^6)} \cdot \frac{q^4}{(1-q^4)} \cdot S(q).$$
Sizes $1, ~ 2k_2+3, ~ 2k_2+2k_1+5$ for all $k_1, k_2 \geq 1$:
$$\frac{q^1 q^3 q^5}{(1+q^2)(1+q^4) (1+q^6)} \cdot \frac{q^2}{(1-q^2)} \cdot \frac{q^4}{(1-q^4)} \cdot S(q).$$
Sizes $2k_3+1, ~ 2k_3+3, ~ 2k_3+5$ for all $k_3 \geq 1$:
$$\frac{q^1 q^3 q^5}{(1+q^2)(1+q^4)(1+q^6)} \cdot \frac{q^6}{(1-q^6)} \cdot S(q).$$ 
Sizes $2k_3+1, ~ 2k_3+3, ~ 2k_3+2k_1+5$ for all $k_1, k_3\geq1$:
$$\frac{q^1 q^3 q^5}{(1+q^2)(1+q^4)(1+q^6)} \cdot \frac{q^2}{(1-q^2)} \cdot \frac{q^6}{(1-q^6)} \cdot S(q).$$
Sizes $2k_3+1, ~ 2k_3+2k_2+3,  ~ 2k_3+2k_2+5$ for all $k_2, k_3\geq1$:
$$\frac{q^1 q^3 q^5}{(1+q^2)(1+q^4)(1+q^6)} \cdot \frac{q^4}{(1-q^4)} \cdot \frac{q^6}{(1-q^6)} \cdot S(q).$$
Sizes $2k_3+1, ~ 2k_3+2k_2+3, ~ 2k_3+2k_2+2k_1+5$ for all $k_1, k_2, k_3 \geq 1$:
$$\frac{q^1 q^3 q^5}{(1+q^2)(1+q^4) (1+q^6)} \cdot \frac{q^2}{(1-q^2)} \cdot \frac{q^4}{(1-q^4)} \cdot \frac{q^6}{(1-q^6)} \cdot S(q).$$
So, the generating function \eqref{n=3} for the case $n=3$ :
$$\frac{q^1 q^3 q^5}{(1+q^2)(1+q^4)(1+q^6)} \cdot \frac{1}{(1-q^2)} \cdot \frac{1}{(1-q^4)} \cdot \frac{1}{(1-q^6)} \cdot S(q) = \frac{q^{3^2}}{(q^4; q^4)_3} \cdot S(q).$$
Note that while $n$ changes to $n+1$, the number of possible sizes we deal with changes from $2^n$ to $2^{n+1}$. This depends on two facts: The minimum size is either $1$, or greater than $1$ when we look at the possible sizes, and between two distinct sizes the distance is either $2$, or greater than $2$. 

While $n$ changes to $n+1$, what we do is as follows: 
We take all possible sizes for the case $n$, and add the term $(2n+1)$ to each option, then shift the indices ($k_i$'s) to the right term if they exist. This gives the possible size options including $1$ as a minimum size for the case $n+1$, and these are half of all the possible sizes for the case $n+1$. Then adding $2k_{n+1}$ to the each term of the possible size options including $1$ as a minimum size, we get the other half. 

Application of this process from $n=2$ to $(n+1)=3$ can be seen below:
We take the possible sizes for the case $n=2$, and add the term $5$ to each option, then shift the indices to the right term if they exist:

\noindent
$1, ~ 3 ~ \rightarrow  \mathbf{1, ~ 3, ~ 5} $ \\
$1, ~ 2k_1+3 ~ \rightarrow 1, ~ 2k_1+3,  ~ 5 ~ \rightarrow \mathbf{1, ~ 3,  ~ 2k_1+5} $ \\
$2k_2+1, ~ 2k_2+3 ~ \rightarrow 2k_2+1, ~ 2k_2+3, ~ 5 ~ \rightarrow  \mathbf{1, ~ 2k_2+3, 2k_2+5} $ \\ 
$2k_2+1, ~ 2k_2+2k_1+3 ~ \rightarrow 2k_2+1, ~ 2k_2+2k_1+3, ~ 5 ~ \rightarrow \mathbf{1, ~ 2k_2+3,2k_2+2k_1+5} $.

Now, we add $2k_{3}$ to the each term of the possible size options including $1$ as a minimum size, we get the other half.  

\noindent
$\mathbf{2k_3+1, ~ 2k_3+3, ~ 2k_3+5}$\\
$\mathbf{2k_3+1, ~ 2k_3+3, ~ 2k_3+2k_1+5}$\\
$\mathbf{2k_3+1, ~ 2k_3+2k_2+3,  ~ 2k_3+2k_2+5 }$\\
$\mathbf{2k_3+1, ~ 2k_3+2k_2+3, ~ 2k_3+2k_2+2k_1+5}$

Hence, using these observations and the lemmas we proved, we see that while $n$ changes to $(n+1)$ we multiply the generating function for $n$, by 
$$\frac{q^{2n+1}}{(1+q^{2n+2})} \cdot \frac{1}{(1-q^{2(n+1)})}.$$
So we get
$$\frac{q^{n^2}}{(q^4;q^4)_n} \cdot S(q) \cdot \frac{q^{2n+1}}{(1+q^{2n+2})} \cdot \frac{1}{(1-q^{2(n+1)})}=\frac{q^{(n+1)^2}}{(q^4;q^4)_{n+1}} \cdot S(q),$$
and this completes our proof.
\end{proof}

Now, we continue our work with cylindric partitions having profile $c=(3,0)$, and we find an alternative expression for the generating function of cylindric partitions with the given profile.  

\begin{theorem} \label{theorem2}
Cylindric partitions with profile $c=(3,0)$ have the following generating function:
$$\bigg(\sum_{n\geq 0} \frac{q^{n^2+2 \cdot n}}{(q^4 ; q^4)_n}\bigg) \cdot \frac{(-q^2 ; q^2)_{\infty}}{(q ; q)_{\infty}}. $$
\end{theorem}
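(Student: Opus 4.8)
The plan is to repeat, essentially word for word, the argument used for Theorem~\ref{theorem1}; the only genuine change is that for the profile $(3,0)$ the minimal slice of the ``problematic'' shape has weight $3$ rather than $1$, and this is exactly what turns the exponent $n^{2}$ into $n^{2}+2n$.

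First I would enumerate the shapes for $c=(3,0)$. Since the rank is $r=2$ and the level is $\ell=3$, there are $\binom{\ell+r-1}{r-1}=\binom{4}{1}=4$ shapes, namely $(0),(1),(2),(3)$, which I label $a,c,b,d$ following the convention of Theorem~\ref{theorem1}. Drawing the minimal positive-weight slice of each shape and reading off its generating term, I expect to obtain the minimal slices $aq^{3}$, $bq^{1}$, $cq^{2}$ and $dq^{2}$; since larger slices of a fixed shape are produced by adding one square to each of the $r=2$ rows, shapes $a$ and $b$ then carry only odd weights while shapes $c$ and $d$ carry only even weights. The resulting slice flow is the same diagram as the one in the proof of Theorem~\ref{theorem1}, except that the row of $a$-nodes starts at $aq^{3}$; in particular the only minimum-distance restriction is again that a slice $aq^{2k+1}$ can be neither immediately preceded by $dq^{2k}$ nor immediately followed by $dq^{2k+2}$, and there is no further obstruction among the shapes $b,c,d$.

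With the flow in hand the computation runs just as before. Cylindric partitions of profile $(3,0)$ containing no slice of shape $a$ have generating function
$$S(q)=\frac{\prod_{k\geq 1}(1+q^{2k})}{\prod_{k\geq 1}(1-q^{k})}=\frac{(-q^{2};q^{2})_{\infty}}{(q;q)_{\infty}},$$
exactly as in \eqref{n=0}, because shapes $b,c,d$ contribute the same product $\prod_{k\geq 0}(1+bq^{2k+1})(1+cq^{2k+2}+dq^{2k+2})$ as in the $c=(2,1)$ case. Next, letting $n$ be the number of distinct sizes of $a$-slices that occur, I would list the admissible size-vectors (smallest size either $3$, the minimal admissible $a$-weight, or larger, and consecutive gaps equal to $2$ or larger), sum the corresponding contributions using Lemmas~\ref{lem1}, \ref{lem2}, \ref{lem3} and \ref{lem4}, whose statements do not depend on the base weight and hence apply verbatim, and keep track of the $(1+q^{\bullet})$ factors produced by the forbidden $d$-slices. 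The base case $n=1$ gives $\frac{q^{3}}{1-q^{4}}\,S(q)=\frac{q^{1^{2}+2\cdot 1}}{(q^{4};q^{4})_{1}}\,S(q)$, and the same index-shifting and new-index bookkeeping as in the proof of Theorem~\ref{theorem1} shows that passing from $n$ to $n+1$ multiplies the contribution by
$$\frac{q^{2n+3}}{1+q^{2n+2}}\cdot\frac{1}{1-q^{2(n+1)}}.$$
Since
$$\frac{q^{n^{2}+2n}}{(q^{4};q^{4})_{n}}\cdot\frac{q^{2n+3}}{1+q^{2n+2}}\cdot\frac{1}{1-q^{2(n+1)}}=\frac{q^{(n+1)^{2}+2(n+1)}}{(q^{4};q^{4})_{n+1}},$$
summing over $n\geq 0$ yields $\Bigl(\sum_{n\geq 0}\frac{q^{n^{2}+2n}}{(q^{4};q^{4})_{n}}\Bigr)\cdot\frac{(-q^{2};q^{2})_{\infty}}{(q;q)_{\infty}}$, which is the claimed generating function.

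I expect the main obstacle to be the combinatorial bookkeeping at the very beginning: correctly drawing the four minimal slices for $c=(3,0)$, confirming that the minimal $a$-slice genuinely has weight $3$ and not $1$, and checking that the minimum-distance condition between shapes $a$ and $d$, together with the absence of any other obstruction among $b,c,d$, coincides with the $c=(2,1)$ situation. Only once this is settled do Lemmas~\ref{lem1}--\ref{lem4} apply unchanged and does the recursion close to produce the shifted exponent $n^{2}+2n$; everything past the setup is the same $q$-series manipulation already carried out in the proof of Theorem~\ref{theorem1}.
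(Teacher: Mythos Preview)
Your overall strategy matches the paper's: recognise that the slice flow for $c=(3,0)$ is the same diagram as for $c=(2,1)$ with the single weight-$1$ node at the end removed, so that the recursion from Theorem~\ref{theorem1} applies verbatim and produces the shifted exponent $n^{2}+2n$. However, the concrete setup you wrote down is wrong in exactly the place you flagged as the obstacle. With your labeling $(0),(1),(2),(3)\mapsto a,c,b,d$, the minimal positive-weight slices for profile $(3,0)$ are $aq^{2}$, $cq^{1}$, $bq^{2}$, $dq^{3}$, not $aq^{3}$, $bq^{1}$, $cq^{2}$, $dq^{2}$; consequently your $a,b$ carry \emph{even} weights and your $c,d$ carry \emph{odd} weights, and the problematic shape---the endpoint of the flow that connects to only one neighbour and has minimal weight $3$---is your $d$ (shape $(3)$), not your $a$ (shape $(0)$). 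The restriction is that $dq^{2k+3}$ forbids the shape-$(0)$ slices at weights $2k+2$ and $2k+4$, and the backbone $S(q)$ is built from your shapes $a,b,c$. Once you relabel accordingly, your recursion and the appeal to Lemmas~\ref{lem1}--\ref{lem4} go through exactly as you describe.

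The paper itself takes a small shortcut at this point: rather than rerunning the induction, it simply observes that the $(3,0)$ flow is the $(2,1)$ flow with the single weight-$1$ node deleted, so the contribution for $n$ distinct problematic sizes in $(3,0)$ equals the ``minimum size $\geq 3$'' portion of the $(2,1)$ computation, which is read off as $q^{(n+1)^{2}-1}/(q^{4};q^{4})_{n}=q^{n^{2}+2n}/(q^{4};q^{4})_{n}$.
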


\begin{proof}
Cylindric partitions with profile $c = (3, 0)$, have rank $r=2$ and level $\ell=3$, and there are exactly $ \binom{\ell+r-1}{r-1}= $ $\binom{4}{1}$ shapes. The corresponding shapes are $(0), (1), (2), (3)$ and we will denote these shapes by $b, a, c$, and $d$, respectively. Slices that have these shapes with minimum positive weight are shown below. 

\ytableausetup{centertableaux}
\ytableaushort
{\none,\none }
* {4,3}
* [*(gray)]{3,3} ~ ($aq^1$), \ytableausetup{centertableaux}
\ytableaushort
{\none,\none }
* {4,4}
* [*(gray)]{3,3}
~ ($bq^2$),
\ytableausetup{centertableaux}
\ytableaushort
{\none,\none }
* {5,3}
* [*(gray)]{3,3}  
~ ($cq^2$),

\hfill

\ytableausetup{centertableaux}
\ytableaushort
{\none,\none }
* {6,3}
* [*(gray)]{3,3} ~ ($dq^3$).

Since the rank $r$ is $2$, slices with shapes $a$ and $d$ can only have odd weights, while slices with shapes $b$ and $c$ can only have even weights. Slice flow for the given profile can be found below:

\begin{center}
\begin{tikzcd}
                                        &                                         & \color{blue}{dq^3} \arrow[rd]           &                                         & \color{blue}{dq^5} \arrow[rd]           &                                         & \color{blue}{dq^7} \arrow[rd]           &                   \\
                                        & \color{red}{cq^2} \arrow[ru] \arrow[rd] &                                         & \color{red}{cq^4} \arrow[ru] \arrow[rd] &                                         & \color{red}{cq^6} \arrow[rd] \arrow[ru] &                                         & \color{red}{cq^8} \\
\color{red}{aq^1} \arrow[ru] \arrow[rd] &                                         & \color{red}{aq^3} \arrow[ru] \arrow[rd] &                                         & \color{red}{aq^5} \arrow[ru] \arrow[rd] &                                         & \color{red}{aq^7} \arrow[ru] \arrow[rd] &                   \\
                                        & \color{red}{bq^2} \arrow[ru]            &                                         & \color{red}{bq^4} \arrow[ru]            &                                         & \color{red}{bq^6} \arrow[ru]            &                                         & \color{red}{bq^8}

\end{tikzcd}\\
 {\small Table 4.2: Profile $c=(3,0)$}
 \captionlistentry[table]{Profile $c=(3,0)$}
\end{center}
Investigating the slices for the given profile, we see that cylindric partitions with distinct slices including only the slices with shapes $a$, $b$ and $c$ (all shapes except $d$) have the following generating function:
$$\prod_{k \geq 0} (1 +aq^{2k+1}) (1 + bq^{2k+2} + cq^{2k+2}).$$
Whenever we add a slice $dq^{2k+3}$ where $k \geq 0$, we should erase the slices $bq^{2k+2}$ and $bq^{2k+4}$ from the product above. This is the only restriction we have. Let us share with you the possible arrangements of distinct slices with maximum weight $6$ ($bq^6$ or $cq^6$):

$(1+aq^1)(1+bq^2+cq^2)(1+aq^3)(1+bq^4+cq^4)(1+aq^5)(1+bq^6+cq^6)$

$+ (1+aq^1)\mathbf{(1+cq^2)(dq^3)(1+cq^4)}(1+aq^5)(1+bq^6+cq^6)$

$+ (1+aq^1)(1+bq^2+cq^2)(1+aq^3)\mathbf{(1+cq^4)(dq^5)(1+cq^6)}$ 

$+ (1+aq^1)\mathbf{(1+cq^2)(dq^3)(1+cq^4)(dq^5)(1+cq^6)}.$ 

Since the maximum weight is restricted by $6$ above, the possible weights of slices with shape $d$ are $3$ and $5$. So, there is a one-to-one correspondence between the subsets of the set $\{ dq^3, dq^5\}$, and the parts of the sum above. Now, if we compare slice flows for the profile $c=(3,0)$ and the profile $c=(2,1)$ the only difference is that the minimum weight for the shape $d$ is $3$ for the profile $c=(3,0)$, while minimum weight for the shape $a$ is $1$ for the profile $c=(2,1)$. Same pattern, with one missing piece, which is $dq^1$. So, in the proof of Theorem \ref{theorem1}, if we exclude the parts that include the $aq^1$ slice, the generating function becomes
$$\bigg(\sum_{n\geq 0} \frac{q^{(n+1)^2 -1}}{(q^4 ; q^4)_{n}}\bigg) \cdot \frac{(-q^2 ; q^2)_{\infty}}{(q ; q)_{\infty}}=\bigg(\sum_{n\geq 0} \frac{q^{n^2+2 \cdot n}}{(q^4 ; q^4)_n}\bigg) \cdot \frac{(-q^2 ; q^2)_{\infty}}{(q ; q)_{\infty}}. $$
\end{proof}
Now, we will study cylindric partitions with rank $r=3$ and level $\ell=2$. 

\begin{theorem} \label{theorem3}
Cylindric partitions with profile $c=(2,0,0)$ have the following generating function:
$$\bigg(\sum_{n\geq 0} \frac{q^{n^2+2 \cdot n}}{(q^4 ; q^4)_n}\bigg) \cdot \frac{(-q^2 ; q^2)_{\infty}}{(q ; q)_{\infty}}. $$
\end{theorem}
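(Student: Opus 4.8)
The plan is not to redo the slice-flow computation from scratch but to reduce to Theorem \ref{theorem2} by the rank-level duality of Gessel and Krattenthaler \cite{gessel1997cylindric}. The profile $c=(2,0,0)$ has rank $r=3$ and level $\ell=2$, so $t=r+\ell=5$, the same $t$ as for the profile $c=(3,0)$ of Theorem \ref{theorem2}, which has rank $2$ and level $3$. Transposing the $3$-D Ferrers diagram on the cylinder (equivalently, conjugating the underlying cylindric partition in the sense of Definition \ref{def cyc}) is a size-preserving bijection from cylindric partitions with profile $(2,0,0)$ onto cylindric partitions with a rank-$2$, level-$3$ profile; carrying the profile through this transposition and using the cyclic-shift invariance $F_c(z,q)=F_{S(c)}(z,q)$, one identifies that dual profile with $(3,0)$. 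Hence
\[
F_{(2,0,0)}(1,q)=F_{(3,0)}(1,q)=\bigg(\sum_{n\geq 0}\frac{q^{n^2+2n}}{(q^4;q^4)_n}\bigg)\cdot\frac{(-q^2;q^2)_\infty}{(q;q)_\infty},
\]
the last equality being Theorem \ref{theorem2}.

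Concretely, I would first spell out the transposition map: given $\Lambda$ with profile $(2,0,0)$, read its cylindric diagram along the other axis and check that the inequalities of Definition \ref{def cyc} for $(2,0,0)$ turn into exactly those for a rank-$2$, level-$3$ profile while the size $|\Lambda|$ is preserved. Then I would verify the resulting profile is a cyclic rotation of $(3,0)$, so that $F_c=F_{S(c)}$ applies, and finally invoke Theorem \ref{theorem2}. As a sanity check, or as an entirely independent route, one can also compare Borodin's formula (Theorem \ref{Borodin}) at $c=(2,0,0)$ and at $c=(3,0)$: both have $t=5$, and a short Pochhammer manipulation shows each collapses to $\frac{1}{(q;q)_\infty(q^2;q^5)_\infty(q^3;q^5)_\infty}$, the right-hand side of \eqref{c=(3,0)}.

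For a reader who wants the method of weighted words applied directly to this rank-$3$ profile, the alternative is to list the $\binom{\ell+r-1}{r-1}=\binom{4}{2}=6$ shapes of slices for $c=(2,0,0)$, record the minimal positive-weight slice and the weight-residue modulo $r=3$ of each, fix the minimum-distance conditions, and draw the slice flow; one would then have to show this flow carries the same ``one obstructing shape'' structure as Table 4.2, so that Lemmas \ref{lem1}--\ref{lem4} apply exactly as in the proof of Theorem \ref{theorem1}.

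The main obstacle is the bookkeeping in the duality step rather than anything analytic: because slices of a fixed shape now step by $3$ (the rank) instead of by $2$ and live in residue classes modulo $3$, the direct slice-flow picture genuinely does not look like Table 4.2, so the equality of generating functions rests on getting the transposition and the cyclic shift right, not on an obvious isomorphism of flows. Once the dual profile is correctly pinned down as a rotation of $(3,0)$, the conclusion is immediate from Theorem \ref{theorem2}.
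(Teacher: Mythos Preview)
Your duality argument is correct and would give a valid proof; the paper even acknowledges in the introduction that rank--level duality \cite{gessel1997cylindric} is one way to obtain the rank-$3$, level-$2$ results from the rank-$2$, level-$3$ ones. However, the paper's own proof of this theorem takes a different route: it applies the method of weighted words directly to the profile $(2,0,0)$. It lists the six slice shapes, records their minimal positive weights and residues modulo $3$, draws the slice flow (Table 4.3), writes out the restricted product and the possible arrangements of distinct slices with maximum weight $6$, and then observes that this structure and its generalization are identical to those in the proof of Theorem~\ref{theorem2}. Your final paragraph is therefore mistaken: the direct slice-flow picture \emph{does} have the same directed-graph structure as Table 4.2. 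The point you missed is that while each individual shape steps by $3$ in weight, there are six shapes rather than four, and they interleave so that the Hasse diagram of all slices ordered by weight reproduces the same pattern---one ``obstructing'' slice at each odd weight $\geq 3$, with the same adjacency constraints---just with the node labels cycling through different shape names.

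Your approach is shorter and offloads the work to a known bijection; the paper's approach shows that the weighted-words machinery operates intrinsically at rank $3$, which is the methodological point the paper is making. Your third paragraph actually sketches exactly what the paper does, so you had the right alternative in hand; you just underestimated how cleanly it works.
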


\begin{proof}
Cylindric partitions with profile $c = (2, 0, 0)$, have rank $r=3$ and level $\ell=2$. So there are exactly $ \binom{\ell+r-1}{r-1}= $ $\binom{4}{2}$ shapes. The corresponding shapes are $(0,0), (1,0), (1,1), (2,0), (2,1), (2,2)$ and we will denote these shapes by $c, a, b, d, e$, and $f$, respectively. Slices that have these shapes with minimum positive weight are shown below. Note that the gray squares are created by the profile and have zero weight.

\ytableausetup{centertableaux}
\ytableaushort
{\none,\none }
* {3,2,2}
* [*(gray)]{2,2,2} ~ ($aq^1$), \ytableausetup{centertableaux}
\ytableaushort
{\none,\none }
* {3,3,2}
* [*(gray)]{2,2,2}
~ ($bq^2$),
\ytableausetup{centertableaux}
\ytableaushort
{\none,\none }
* {3,3,3}
* [*(gray)]{2,2,2}  
~ ($cq^3$),

\hfill

\ytableausetup{centertableaux}
\ytableaushort
{\none,\none }
* {4,2,2}
* [*(gray)]{2,2,2} ~ ($dq^2$),
\ytableausetup{centertableaux}
\ytableaushort
{\none,\none }
* {4,3,2}
* [*(gray)]{2,2,2} ~ ($eq^3$),
\ytableausetup{centertableaux}
\ytableaushort
{\none,\none }
* {4,4,2}
* [*(gray)]{2,2,2} ~ ($fq^4$).

Since the rank $r$ is $3$, weights of the slices with shapes $a$ and $f$ are congruent to $1 \pmod{3}$, weights of the slices with shapes $b$ and $d$ are congruent to $2 \pmod{3}$, and weights of the slices with shapes $c$ and $e$ are congruent to $0 \pmod{3}$. Slice flow for the given profile is as follows:
\begin{center}
\begin{tikzcd}
                                        &                                         & \color{blue}{cq^3} \arrow[rd]           &                                         & \color{blue}{dq^5} \arrow[rd]           &                                         & \color{blue}{fq^7} \arrow[rd]           &                   \\
                                        & \color{red}{bq^2} \arrow[ru] \arrow[rd] &                                         & \color{red}{aq^4} \arrow[ru] \arrow[rd] &                                         & \color{red}{eq^6} \arrow[rd] \arrow[ru] &                                         & \color{red}{bq^8} \\
\color{red}{aq^1} \arrow[ru] \arrow[rd] &                                         & \color{red}{eq^3} \arrow[ru] \arrow[rd] &                                         & \color{red}{bq^5} \arrow[ru] \arrow[rd] &                                         & \color{red}{aq^7} \arrow[ru] \arrow[rd] &                   \\
                                        & \color{red}{dq^2} \arrow[ru]            &                                         & \color{red}{fq^4} \arrow[ru]            &                                         & \color{red}{cq^6} \arrow[ru]            &                                         & \color{red}{dq^8}
\end{tikzcd}\\
 {\small Table 4.3: Profile $c=(2,0,0)$}
 \captionlistentry[table]{Profile $c=(2,0,0)$}
\end{center}

Now, consider the following product:
\begin{align*}
&(1 +aq^{1}) \cdot (1 + bq^{2} + dq^{2}) \cdot \underbrace{(1 + eq^{3})}_{bq^2 \leftarrow cq^3 \rightarrow aq^4} \cdot (1 + aq^{4} + fq^{4}) \cdot \underbrace{(1 + bq^{5})}_{aq^4 \leftarrow dq^5 \rightarrow eq^6} \cdot (1 + eq^{6} + cq^{6})\cdot \\
&\underbrace{(1 +aq^{7})}_{eq^6 \leftarrow fq^7 \rightarrow bq^8}  
\cdot (1 + bq^{8} + dq^{8}) \cdot \underbrace{(1 + eq^{9})}_{bq^8 \leftarrow cq^9 \rightarrow aq^{10}} \cdot (1 + aq^{10} + fq^{10}) \cdot \underbrace{(1 + bq^{11})}_{aq^{10} \leftarrow dq^{11} \rightarrow eq^{12}} \ldots.
\end{align*}
In the above product, in every odd weight greater than or equal to $3$, there is a missing slice which should be taken into account, namely $cq^{6k+3}, dq^{6k+5}, fq^{6k+7}$ for any $k \geq 0$. These are indicated under the corresponding parentheses with possible slices next to them. The inserted slice is written in the middle, with its potential neighbors indicated on the left and on the right. Considering these restrictions, possible arrangements of distinct slices with maximum weight $6$ ($eq^6$ or $cq^6$) as follows: 

$(1 +aq^{1}) \cdot (1 + bq^{2} + dq^{2}) \cdot (1 + eq^{3}) \cdot (1 + aq^{4} + fq^{4}) \cdot (1 + bq^{5}) \cdot (1 + eq^{6} + cq^{6})$

$+ (1 +aq^{1}) \cdot \mathbf{(1 + bq^{2}) \cdot (cq^{3}) \cdot (1 + aq^{4} )} \cdot (1 + bq^{5}) \cdot (1 + eq^{6} + cq^{6})$

$+ (1 +aq^{1}) \cdot (1 + bq^{2} + dq^{2}) \cdot (1 + eq^{3}) \cdot \mathbf{(1 + aq^{4}) \cdot (dq^{5}) \cdot (1 + eq^{6})}$

$+ (1 +aq^{1}) \cdot \mathbf{(1 + bq^{2}) \cdot (cq^{3}) \cdot (1 + aq^{4}) \cdot (dq^{5}) \cdot (1 + eq^{6})}.$

This structure and the generalization of this structure are the same as in the proof of Theorem \ref{theorem2}. 
\end{proof}

The following theorem will finalize this section. 

\begin{theorem} \label{theorem4}
Cylindric partitions with profile $c=(1,1,0)$ have the following generating function:
$$\bigg(\sum_{n\geq 0} \frac{q^{n^2}}{(q^4 ; q^4)_n}\bigg) \cdot \frac{(-q^2 ; q^2)_{\infty}}{(q ; q)_{\infty}}. $$
\end{theorem}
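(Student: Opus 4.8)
The plan is to follow the same three-step scheme used for Theorem \ref{theorem1} and adapted to the rank-$3$ setting in Theorem \ref{theorem3}: first list the shapes and their minimal slices for the profile $c=(1,1,0)$ and draw the slice flow; then isolate the one ``distinguished'' shape whose occurrence imposes placement restrictions, and show that the cylindric partitions avoiding that shape have generating function $S(q)=\frac{(-q^2;q^2)_\infty}{(q;q)_\infty}$; finally observe that the slice flow, together with its placement rules, is the same weighted directed graph --- up to relabelling the shapes --- as the one for $c=(2,1)$, and conclude by the computation already carried out in the proof of Theorem \ref{theorem1}.

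For the first step, since $r=3$ and $\ell=2$ there are $\binom{\ell+r-1}{r-1}=\binom{4}{2}=6$ shapes, namely $(0,0),(1,0),(1,1),(2,0),(2,1),(2,2)$. I would draw the minimal positive-weight slice realising each of these shapes, read off its generating term $xq^{n}$, and record the residue of $n$ modulo $r=3$ (each shape carries a fixed residue, and its larger slices are obtained by adding $3$ to the weight). This yields the slice flow in the format of Table 4.3.

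For the second and third steps, I would single out the distinguished shape --- the analogue of shape $a$ for $c=(2,1)$ --- characterised by: its minimal slice has weight $1$; inserting a copy of it of weight $w$ forces a definite slice at weight $w$ and removes exactly one option each at weights $w-1$ and $w+1$; and no further restriction occurs. The product over the remaining shapes, with repetitions allowed, should telescope to $S(q)$ exactly as in \eqref{n=0}. The key point is then that the resulting weighted graph and its placement rules coincide with those of $c=(2,1)$, including the presence of a weight-$1$ copy of the distinguished shape and the distance pattern of its successive sizes; consequently, splitting the generating function by the number $n$ of distinct sizes of distinguished slices that appear and invoking Lemmas \ref{lem1}--\ref{lem4} verbatim produces $\sum_{n\ge 0}\frac{q^{n^2}}{(q^4;q^4)_n}\cdot S(q)$. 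As a shortcut one could instead quote the rank--level duality of \cite{gessel1997cylindric} to get $F_{(1,1,0)}(1,q)=F_{(2,1)}(1,q)$ and cite Theorem \ref{theorem1}, but the slice-flow route stays in the combinatorial spirit of this section.

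The hard part will be the bookkeeping in the first two steps: correctly drawing the six minimal slices and their residues mod $3$, pinning down the distinguished shape, and verifying that inserting it blocks exactly a two-sided pair of neighbouring slices --- in other words, checking that the slice flow of $c=(1,1,0)$ is genuinely the same weighted graph as that of $c=(2,1)$, and in particular that the weight-$1$ distinguished slice is present, so that the exponent is $q^{n^2}$ rather than the shifted $q^{n^2+2n}$ of Theorems \ref{theorem2} and \ref{theorem3}. Once that identification is made, no new analytic work is needed: the $q$-series manipulations are precisely those of Lemmas \ref{lem1}--\ref{lem4}.
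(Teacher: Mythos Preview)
Your proposal is correct and follows essentially the same approach as the paper: draw the slice flow for $c=(1,1,0)$, recognise it as the same weighted directed graph as that for $c=(2,1)$, and import the proof of Theorem~\ref{theorem1} verbatim. One small wrinkle you will discover when you actually carry out the bookkeeping is that in rank~$3$ the ``distinguished'' top row of the slice flow is not occupied by a single shape but by three shapes cycling with period~$6$ (in the paper's labelling, $bq^{6k+1}, fq^{6k+3}, cq^{6k+5}$); this does not affect the argument, since --- as you correctly emphasise --- what matters is the weighted graph structure up to relabelling, and in particular the presence of a weight-$1$ node in the distinguished row is what gives $q^{n^2}$ rather than $q^{n^2+2n}$.
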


\begin{proof}
Cylindric partitions with profile $c = (1, 1, 0)$, have rank $r=3$ and level $\ell=2$. So there are exactly $ \binom{\ell+r-1}{r-1}= $ $\binom{4}{2}$ shapes. The corresponding shapes are $(0,0), (1,0), (1,1), (2,0), (2,1), (2,2)$ and we will denote these shapes by $c, e, a, b, d$, and $f$, respectively. Slices that have these shapes with minimum positive weight are shown below. 

\ytableausetup{centertableaux}
\ytableaushort
{\none,\none }
* {2,2,1}
* [*(gray)]{2,1,1} ~ ($aq^1$), \ytableausetup{centertableaux}
\ytableaushort
{\none,\none }
* {3,1,1}
* [*(gray)]{2,1,1}
~ ($bq^1$),
\ytableausetup{centertableaux}
\ytableaushort
{\none,\none }
* {2,2,2}
* [*(gray)]{2,1,1}  
~ ($cq^2$),
\ytableausetup{centertableaux}
\ytableaushort
{\none,\none }
* {3,2,1}
* [*(gray)]{2,1,1} ~ ($dq^2$),

\hfill

\ytableausetup{centertableaux}
\ytableaushort
{\none,\none }
* {3,2,2}
* [*(gray)]{2,1,1} ~ ($eq^3$),
\ytableausetup{centertableaux}
\ytableaushort
{\none,\none }
* {3,3,1}
* [*(gray)]{2,1,1} ~ ($fq^3$).

Since the rank $r$ is $3$, weights of the slices with shapes $a$ and $b$ are congruent to $1 \pmod{3}$, weights of the slices with shapes $c$ and $d$ are congruent to $2 \pmod{3}$, and weights of the slices with shapes $e$ and $f$ are congruent to $0 \pmod{3}$. Slice flow for the given profile is given below.
\begin{center}
\begin{tikzcd}
\color{blue}{bq^1} \arrow[rd]           &                                         & \color{blue}{fq^3} \arrow[rd]           &                                         & \color{blue}{cq^5} \arrow[rd]           &                                         & \color{blue}{bq^7} \arrow[rd]           &                   \\
                                        & \color{red}{dq^2} \arrow[ru] \arrow[rd] &                                         & \color{red}{aq^4} \arrow[ru] \arrow[rd] &                                         & \color{red}{eq^6} \arrow[rd] \arrow[ru] &                                         & \color{red}{dq^8} \\
\color{red}{aq^1} \arrow[ru] \arrow[rd] &                                         & \color{red}{eq^3} \arrow[ru] \arrow[rd] &                                         & \color{red}{dq^5} \arrow[ru] \arrow[rd] &                                         & \color{red}{aq^7} \arrow[ru] \arrow[rd] &                   \\
                                        & \color{red}{cq^2} \arrow[ru]            &                                         & \color{red}{bq^4} \arrow[ru]            &                                         & \color{red}{fq^6} \arrow[ru]            &                                         & \color{red}{eq^8}
\end{tikzcd}\\
 {\small Table 4.4: Profile $c=(1,1,0)$}
 \captionlistentry[table]{Profile $c=(1,1,0)$}
\end{center}
Now, consider the following product in which the missing slices and the restrictions are indicated:
\begin{align*}
&\underbrace{(1 +aq^{1})}_{bq^1 \rightarrow dq^2} \cdot (1 + cq^{2} + dq^{2}) \cdot \underbrace{(1 + eq^{3})}_{dq^2 \leftarrow fq^3 \rightarrow aq^4} \cdot (1 + aq^{4} + bq^{4}) \cdot \underbrace{(1 + dq^{5})}_{aq^4 \leftarrow cq^5 \rightarrow eq^6} \cdot (1 + eq^{6} + fq^{6})\cdot \\
&\underbrace{(1 +aq^{7})}_{eq^6 \leftarrow bq^7 \rightarrow dq^8} \cdot (1 + cq^{8} + dq^{8}) \cdot \underbrace{(1 + eq^{9})}_{dq^8 \leftarrow fq^9 \rightarrow aq^{10}} \cdot (1 + aq^{10} + bq^{10}) \cdot \underbrace{(1 + dq^{11})}_{aq^{10} \leftarrow cq^{11} \rightarrow eq^{12}} \ldots .
\end{align*}
In the above product, in every odd weight, there is a missing slice which should be taken into account, namely $bq^{6k+1}, fq^{6k+3}, cq^{6k+5}$ for any $k \geq 0$. These are indicated under the corresponding parentheses with possible slices next to them. Possible arrangements of distinct slices with maximum weight $6$ ($eq^6$ or $fq^6$) are as follows:

$(1+aq^1)(1+cq^2+dq^2)(1+eq^3)(1+aq^4+bq^4)(1+dq^5)(1+eq^6+fq^6)$

$+ \mathbf{(bq^1)(1+dq^2)}(1+eq^3)(1+aq^4+bq^4)(1+dq^5)(1+eq^6+fq^6) $ 

$+ (1+aq^1)\mathbf{(1+dq^2)(fq^3)(1+aq^4)}(1+dq^5)(1+eq^6+fq^6) $

$+ (1+aq^1)(1+cq^2+dq^2)(1+eq^3)\mathbf{(1+aq^4)(cq^5)(1+eq^6)} $ 

$+ \mathbf{(bq^1)(1+dq^2)(fq^3)(1+aq^4)}(1+dq^5)(1+eq^6+fq^6) $

$+ \mathbf{(bq^1)(1+dq^2)}(1+eq^3)\mathbf{(1+aq^4)(cq^5)(1+eq^6)} $

$+ (1+aq^1)\mathbf{(1+dq^2)(fq^3)(1+aq^4)(cq^5)(1+eq^6)} $ 

$+ \mathbf{(bq^1)(1+dq^2)(fq^3)(1+aq^4)(cq^5)(1+eq^6)}$. 

This structure and generalization of this structure are the same as in the proof of Theorem \ref{theorem1}, then so is the proof. 
\end{proof}

\section{Method of Weighted Words on Cylindric Partitions with Rank 2 - Level 4 and Cylindric Partitions with Rank 4 - Level 2}
\label{sec:3}
In this section, we apply the method of weighted words on cylindric partitions with rank $2$ - level $4$ and rank $4$ - level $2$. First of all, we begin studying cylindric partitions with rank $2$ - level $4$. In this case, we deal with profiles $c=(4,0)$, $c=(3,1)$ and $c=(2,2)$. We derive alternative expressions for the generating functions of cylindric partitions with the given profiles.  

\begin{theorem} \label{theorem7}
Cylindric partitions with profile $c=(4,0)$ have the following generating function:
$$\bigg(\sum_{n\geq 0} \frac{q^{n(n+1)} \cdot (-q^2 ; q^2)_n }{(-q^3 ; q^2)_n \cdot (q^2 ; q^2)_n}\bigg) \cdot \frac{(-q^3 ; q^2)_{\infty}}{(q ; q)_{\infty}}. $$
\end{theorem}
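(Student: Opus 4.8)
The plan is to reuse the slice-flow machinery of Sections~\ref{sec:1} and~\ref{sec:2}, now for rank $r=2$ and level $\ell=4$. First I would list the $\binom{\ell+r-1}{r-1}=\binom{5}{1}=5$ shapes $(0),(1),(2),(3),(4)$ for the profile $c=(4,0)$, draw the minimal positive-weight slice of each, and record its residue modulo $2$; the rank forces one shape of minimal weight $1$, two shapes of minimal weight $2$, and two shapes of minimal weight $3$. From this I would draw the slice flow for $c=(4,0)$. Exactly one of the two minimal-weight-$2$ shapes --- call it $\mathbf{a}$ --- is the ``problematic'' one: the cylindric partitions whose slices avoid $\mathbf{a}$ are enumerated by a single infinite product which, on setting all shape variables to $1$, reduces to
$$\widehat{S}(q):=\frac{(-q^{3};q^{2})_{\infty}}{(q;q)_{\infty}},$$
the ``$n=0$'' factor of the claimed formula; this step is routine and parallels \eqref{n=0} together with the proofs of Theorems~\ref{theorem5} and~\ref{theorem6}.

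Next I would read off from the flow which slices are forbidden immediately before and after a slice of shape $\mathbf{a}$ of prescribed weight, and stratify cylindric partitions by $n$, the number of \emph{distinct} weights at which $\mathbf{a}$-slices occur (the minimal such configuration placing them at weights $2,4,\ldots,2n$, contributing $q^{2+4+\cdots+2n}=q^{n(n+1)}$). As with \eqref{n=1}, \eqref{n=2}, \eqref{n=3} for $c=(2,1)$, this produces an $n$-fold sum over gap parameters $k_1,\ldots,k_n$ whose summand is a product of ``side factors'' $q^{\mathrm{wt}}/\!\big((1+q^{\bullet})(1+q^{\bullet})\big)$; the crucial difference from Theorem~\ref{theorem1} is that in the level-$4$ flow the doubled pair of shapes sits over the \emph{odd} weights $\geq 3$ rather than the even ones, so the exponents $q^{\bullet}$ in these denominators are now of the form $q^{2k\pm 1}$ and the bookkeeping carries an additional $(1+q^{2k})$-type factor absent at level $3$. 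I would then prove the analogues of Lemmas~\ref{lem1}--\ref{lem4} for this new summand --- a base partial-fraction/telescoping identity, its one-index iterate, its two-index iterate, and the general $n$-fold evaluation by induction on $n$ --- and sum the closed forms over the finitely many distance patterns for each $n$. The organizing check is that passing from $n$ distinct $\mathbf{a}$-weights to $n+1$ multiplies the generating function by
$$\frac{q^{2n+2}\,(1+q^{2n+2})}{(1-q^{2n+2})\,(1+q^{2n+3})},$$
which is precisely the factor sending $\dfrac{q^{n(n+1)}\,(-q^{2};q^{2})_{n}}{(-q^{3};q^{2})_{n}\,(q^{2};q^{2})_{n}}\,\widehat{S}(q)$ to its $(n+1)$-st term; summing over $n\geq 0$ gives the stated expression, and rank--level duality then transfers it to the profile $c=(2,0,0,0)$.

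The step I expect to be the main obstacle is pinning down the local interaction of $\mathbf{a}$ with the odd-weight doubled pair, and in particular locating the source of the extra $(1+q^{2n+2})$ in the numerator of the incremental factor. For instance, the most naive telescoping of the single-$\mathbf{a}$ ($n=1$) sum produces $q^{2}/\!\big((1-q^{2})(1+q^{3})\big)$, which is short of the required $q^{2}(1+q^{2})/\!\big((1-q^{2})(1+q^{3})\big)$ by exactly one factor $(1+q^{2})$; so some additional contribution --- presumably because deleting $\mathbf{a}$ below a given weight frees a slice that did not appear in the level-$3$ profiles --- must be isolated, folded into the summand, and carried faithfully through the $n$-fold induction, so that the $(1+q^{2k})$ factors assemble into $(-q^{2};q^{2})_{n}$ while the odd-exponent factors assemble into $(-q^{3};q^{2})_{n}^{-1}$. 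Everything else is a bookkeeping-heavy but otherwise straightforward adaptation of the level-$3$ arguments.
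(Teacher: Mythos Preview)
Your plan has a genuine structural gap: for $c=(4,0)$ there is not one ``problematic'' shape but \emph{two}.  With the paper's labeling the five shapes have minimal slices $aq^{1},bq^{2},cq^{2},dq^{3},eq^{4}$ (so your count ``two shapes of minimal weight $3$'' is already off; the fifth minimal weight is $4$).  In the slice flow the three shapes $a,c,d$ form the middle chain, while $b$ (top) and $e$ (bottom) are both leaves.  The base product
\[
S(q)=\frac{(-q^{3};q^{2})_{\infty}}{(q;q)_{\infty}}
\]
is the generating function for cylindric partitions built from $a,c,d$ only, with \emph{both} $b$ and $e$ excluded.  The parameter $n$ in the claimed series is the number of distinct slices of shape $b$ \emph{or} $e$ combined, not of a single shape.

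This is precisely the source of the factor you cannot locate.  For each fixed $n$ one must sum over all $2^{n}$ words in $\{b,e\}^{n}$ describing which of the two outer shapes occurs at each of the $n$ positions.  Consecutive equal letters ($bb$ or $ee$) allow weight gap $2$, while consecutive unequal letters ($be$ or $eb$) force weight gap $\geq 4$; together with the different minimal weights ($b$ starts at $2$, $e$ at $4$) this case analysis is what produces
\[
q^{2}\cdot q^{4}\cdots q^{2n}\,(1+q^{2})(1+q^{4})\cdots(1+q^{2n})=q^{n(n+1)}(-q^{2};q^{2})_{n}
\]
in the numerator, while the telescoping lemmas supply $\big((-q^{3};q^{2})_{n}(q^{2};q^{2})_{n}\big)^{-1}$ in the denominator.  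With only one problematic shape you will never recover the $(1+q^{2})$ already at $n=1$: it is the $b$-contribution plus the $e$-contribution, $q^{2}+q^{4}=q^{2}(1+q^{2})$, over the common denominator $(1+q^{3})(1-q^{2})$.  Once you rewrite your stratification to track both $b$ and $e$ and run the word-by-word case analysis (as the paper does for $n=2$ via the four cases $(b,b),(b,e),(e,e),(e,b)$), the rest of your outline---the partial-fraction lemmas and the induction on $n$---goes through exactly as you describe.
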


\begin{proof}
Slices of cylindric partitions with profile $c = (4, 0)$ can have exactly $ \binom{\ell+r-1}{r-1}= $ $\binom{5}{1}$ shapes. The possible shapes are $(0), (1), (2), (3), (4)$ and we will denote these shapes by $b, a, c, d$ and $e$, respectively. Slices that have these shapes with minimum positive weight are shown below. 

\ytableausetup{centertableaux}
\ytableaushort
{\none,\none }
* {5,4}
* [*(gray)]{4,4} ~ ($aq^1$), \ytableausetup{centertableaux}
\ytableaushort
{\none,\none }
* {5,5}
* [*(gray)]{4,4}
~ ($bq^2$),
\ytableausetup{centertableaux}
\ytableaushort
{\none,\none }
* {6,4}
* [*(gray)]{4,4}  
~ ($cq^2$),

\hfill

\ytableausetup{centertableaux}
\ytableaushort
{\none,\none }
* {7,4}
* [*(gray)]{4,4} ~ ($dq^3$),
\ytableausetup{centertableaux}
\ytableaushort
{\none,\none }
* {8,4}
* [*(gray)]{4,4} ~ ($eq^4$).

Slices with shapes $a$ and $d$ can only have odd weights, while slices with shapes $b, c$ and $e$ can only have even weights. Slice flow for the given profile can be found below.
\begin{center}
\begin{tikzcd}
                                        & \color{blue}{bq^2} \arrow[rd]           &                                         & \color{blue}{bq^4} \arrow[rd]           &                                         & \color{blue}{bq^6} \arrow[rd]           &                                         & \color{blue}{bq^8} \\
\color{red}{aq^1} \arrow[ru] \arrow[rd] &                                         & \color{red}{aq^3} \arrow[ru] \arrow[rd] &                                         & \color{red}{aq^5} \arrow[ru] \arrow[rd] &                                         & \color{red}{aq^7} \arrow[ru] \arrow[rd] &                    \\
                                        & \color{red}{cq^2} \arrow[rd] \arrow[ru] &                                         & \color{red}{cq^4} \arrow[rd] \arrow[ru] &                                         & \color{red}{cq^6} \arrow[rd] \arrow[ru] &                                         & \color{red}{cq^8}  \\
                                        &                                         & \color{red}{dq^3} \arrow[ru] \arrow[rd] &                                         & \color{red}{dq^5} \arrow[ru] \arrow[rd] &                                         & \color{red}{dq^7} \arrow[ru] \arrow[rd] &                    \\
                                        &                                         &                                         & \color{blue}{eq^4} \arrow[ru]           &                                         & \color{blue}{eq^6} \arrow[ru]           &                                         & \color{blue}{eq^8}
\end{tikzcd}\\
 {\small Table 5.1: Profile $c=(4,0)$}
 \captionlistentry[table]{Profile $c=(4,0)$}
\end{center}
Now, we will work on the following product in which the missing slices and the restrictions are indicated:
\begin{align*}
(1+aq^1) \cdot &\underbrace{(1 +cq^2)}_{bq^2 \rightarrow aq^3} \cdot (1 + aq^3 + dq^3) \cdot\underbrace{(1+cq^4)}_{\underbrace{aq^3 \leftarrow bq^4 \rightarrow aq^5}_{ dq^3 \leftarrow eq^4 \rightarrow dq^5} } \cdot (1 + aq^5 + dq^5) \cdot \underbrace{(1+cq^6)\ldots}_{ \underbrace{aq^5 \leftarrow bq^6 \rightarrow aq^7}_{dq^5 \leftarrow eq^6 \rightarrow dq^7}} 
\end{align*}
The notation is explained below. Now, let us define the following product which is the generating function of cylindric partitions created by slices with shapes $a, c$ or $d$, that is, shapes $b$ and $e$ are not included, when repetitions of slices are not allowed:
\begin{equation} \label{P(q) c=(4,0)}
P(q):= (1 + aq) \prod_{k \geq 0} (1 +cq^{2k+2})(1+aq^{2k+3}+dq^{2k+3}).
\end{equation}
Note that in the product above, for the weight $2$ there is only one missing slice, namely $bq^2$ which can be followed by with slice $aq^3$. Except this case, for every weight $(2k+4)$, there are two slices missing: $bq^{2k+4}$ and $eq^{2k+4}$ for $k \geq 0$. If we put $bq^{2k+4}$ in place of $(1+cq^{2k+4})$ then we need to erase the slices $dq^{2k+3}$ and $dq^{2k+5}$, while if we put $eq^{2k+4}$ in place of $(1+cq^{2k+4})$ then we need to erase the slices $aq^{2k+3}$ and $aq^{2k+5}$. A small example of this process is given below, in which we restrict the maximum slice weight with $7$, and we give the beginning terms of the generating function of cylindric partitions with the given profile, when repetitions of slices are \textbf{not} allowed.     

$(1 + aq^1)(1 +cq^{2})(1+aq^{3}+dq^{3})(1 +cq^{4})(1+aq^{5}+dq^{5})(1 +cq^{6})(1+aq^{7}+dq^{7})$

$+ (1 + aq^1)\mathbf{(bq^{2})(1+aq^{3})}(1 +cq^{4})(1+aq^{5}+dq^{5})(1 +cq^{6})(1+aq^{7}+dq^{7})$

$+ (1 + aq^1)(1 +cq^{2})\mathbf{(1+aq^{3})(bq^{4})(1+aq^{5})}(1 +cq^{6})(1+aq^{7}+dq^{7})$

$+ (1 + aq^1)(1 +cq^{2})\mathbf{(1+dq^{3})(eq^{4})(1+dq^{5})}(1 +cq^{6})(1+aq^{7}+dq^{7})$

$+(1 + aq^1)(1 +cq^{2})(1+aq^{3}+dq^{3})(1 +cq^{4})\mathbf{(1+aq^{5})(bq^{6})(1+aq^{7})}$

$+ (1 + aq^1)(1 +cq^{2})(1+aq^{3}+dq^{3})(1 +cq^{4})\mathbf{(1+dq^{5})(eq^{6})(1+dq^{7})}$

$+ (1 + aq^1)\mathbf{(bq^{2})(1+aq^{3})}\mathbf{(bq^{4})(1+aq^{5})}(1 +cq^{6})(1+aq^{7}+dq^{7})$

$+(1 + aq^1)\mathbf{(bq^{2})(1+aq^{3})}(1 +cq^{4})\mathbf{(1+aq^{5})(bq^{6})(1+aq^{7})}$

$+(1 + aq^1)\mathbf{(bq^{2})(1+aq^{3})}(1 +cq^{4})\mathbf{(1+dq^{5})(eq^{6})(1+dq^{7})}$

$+(1 + aq^1)(1 +cq^{2})\mathbf{(1+aq^{3})(bq^{4})(1+aq^{5})}\mathbf{(bq^{6})(1+aq^{7})}$

$+ (1 + aq^1)(1 +cq^{2})\mathbf{(1+dq^{3})(eq^{4})(1+dq^{5})}\mathbf{(eq^{6})(1+dq^{7})}$

$+ (1 + aq^1)\mathbf{(bq^{2})(1+aq^{3})(bq^{4})(1+aq^{5})}\mathbf{(bq^{6})(1+aq^{7})}.$

Let us write the version of the sum above after allowing slices to repeat and replacing $a,b,c,d,e$ by $1$:

$ (\frac{1}{1-q}) (\frac{1}{1-q^2}) (\frac{1+q^3}{1-q^3}) (\frac{1}{1-q^4})(\frac{1+q^5}{1-q^5})(\frac{1}{1-q^6})(\frac{1+q^7}{1-q^7})$

$+ (\frac{1}{1-q}) (\frac{q^2}{1-q^2}) (\frac{1}{1-q^3}) (\frac{1}{1-q^4})(\frac{1+q^5}{1-q^5})(\frac{1}{1-q^6})(\frac{1+q^7}{1-q^7})$

$+(\frac{1}{1-q}) (\frac{1}{1-q^2}) (\frac{1}{1-q^3}) (\frac{q^4}{1-q^4})(\frac{1}{1-q^5})(\frac{1}{1-q^6})(\frac{1+q^7}{1-q^7})$

$+ (\frac{1}{1-q}) (\frac{1}{1-q^2}) (\frac{1}{1-q^3}) (\frac{q^4}{1-q^4})(\frac{1}{1-q^5})(\frac{1}{1-q^6})(\frac{1+q^7}{1-q^7})$

$+ (\frac{1}{1-q}) (\frac{1}{1-q^2}) (\frac{1+q^3}{1-q^3}) (\frac{1}{1-q^4})(\frac{1}{1-q^5})(\frac{q^6}{1-q^6})(\frac{1}{1-q^7})$

$+ (\frac{1}{1-q}) (\frac{1}{1-q^2}) (\frac{1+q^3}{1-q^3}) (\frac{1}{1-q^4})(\frac{1}{1-q^5})(\frac{q^6}{1-q^6})(\frac{1}{1-q^7})$

$+ (\frac{1}{1-q}) (\frac{q^2}{1-q^2}) (\frac{1}{1-q^3}) (\frac{q^4}{1-q^4})(\frac{1}{1-q^5})(\frac{1}{1-q^6})(\frac{1+q^7}{1-q^7})$

$+ (\frac{1}{1-q}) (\frac{q^2}{1-q^2}) (\frac{1}{1-q^3}) (\frac{1}{1-q^4})(\frac{1}{1-q^5})(\frac{q^6}{1-q^6})(\frac{1}{1-q^7})$

$+ (\frac{1}{1-q}) (\frac{q^2}{1-q^2}) (\frac{1}{1-q^3}) (\frac{1}{1-q^4})(\frac{1}{1-q^5})(\frac{q^6}{1-q^6})(\frac{1}{1-q^7})$

$+ (\frac{1}{1-q}) (\frac{1}{1-q^2}) (\frac{1}{1-q^3}) (\frac{q^4}{1-q^4})(\frac{1}{1-q^5})(\frac{q^6}{1-q^6})(\frac{1}{1-q^7})$

$+ (\frac{1}{1-q}) (\frac{1}{1-q^2}) (\frac{1}{1-q^3}) (\frac{q^4}{1-q^4})(\frac{1}{1-q^5})(\frac{q^6}{1-q^6})(\frac{1}{1-q^7})$

$+(\frac{1}{1-q}) (\frac{q^2}{1-q^2}) (\frac{1}{1-q^3}) (\frac{q^4}{1-q^4})(\frac{1}{1-q^5})(\frac{q^6}{1-q^6})(\frac{1}{1-q^7}).$

Now, we will generalize this small case. First of all, allowing slices to repeat and then putting $a=c=d=1$, \eqref{P(q) c=(4,0)} becomes
\begin{equation} \label{S(q) c=(4,0)}
S(q):= \frac{\prod_{k\geq0} (1+q^{2k+3})}{\prod_{k\geq0}(1-q^{k+1})}=\frac{(-q^3;q^2)_{\infty}}{(q;q)_{\infty}}.
\end{equation}
Let $n$ denote the number of distinct slices with shape $b$ or $e$ that appear in the cylindric partitions with the given profile. Note that, since we allow slices to repeat, for instance if $bq^2$ comes into the picture, then we will see it as $\frac{bq^2}{1-bq^2}$, i.e., all repeated forms of $bq^2$ are being taken into account.

When $n=0$, the generating function of cylindric partitions with the given profile is \eqref{S(q) c=(4,0)}.

When $n=1$, we will either have a slice with shape $b$ ($bq^{2k+2}$ for some $k\geq 0$), or a slice with shape $e$ ($eq^{2k+4}$ for some $k\geq 0$). So, the generating function for $n=1$ is

$$ \underbrace{\bigg(\frac{q^2}{1+q^3} + \sum_{k\geq1} \frac{q^{2k+2}}{(1+q^{2k+1})(1+q^{2k+3})}\bigg)\cdot S(q)}_{\text{every possible size of a slice with shape b included}} + \underbrace{\bigg(\sum_{k\geq1} \frac{q^{2k+2}}{(1+q^{2k+1})(1+q^{2k+3})}\bigg)\cdot S(q)}_{\text{every possible size of a slice with shape e included}}.$$

When $n=2$, we will have two shapes of $b$ or $e$ so that one of the following cases may happen: $(b,b)$, $(b,e)$, $(e,e)$ or $(e,b)$. Note that, in these duos the former written shapes has less weight than the latter. 

Case 1: $(b,b)$

\noindent If weights of slices $(b,b)$ are exactly $2$ and $4$, the generating function including this duo is 
$$\bigg(\frac{q^2 \cdot q^4}{(1+q^3) (1+q^5)}\bigg)\cdot S(q),$$
if weights of slices $(b,b)$ are $2$ and $\geq6$, the generating function including this duo is
$$\bigg(\sum_{k\geq 1} \frac{q^2 \cdot q^{2k+4}}{(1+q^3)(1+q^{2k+3})(1+q^{2k+5})}\bigg)\cdot S(q),$$
if weights of slices $(b,b)$ are $2k+2$ and $2k+4$ for $k\geq1$, the generating function including this duo is
$$\bigg( \sum_{k\geq 1} \frac{q^{2k+2} \cdot q^{2k+4}}{(1+q^{2k+1})(1+q^{2k+3})(1+q^{2k+5})}\bigg)\cdot S(q),$$
if weights of slices $(b,b)$ are $2k+2$ and $\geq2k+6$ for $k\geq1$, the generating function including this duo is
$$\bigg( \sum_{m\geq1} \sum_{k\geq 1} \frac{q^{2k+2} \cdot q^{2m+2k+4}}{(1+q^{2k+1})(1+q^{2k+3})(1+q^{2m+2k+3})(1+q^{2m+2k+5})}\bigg)\cdot S(q).$$
The generating function for this case is the sum of these four disjoint cases.

Case 2: $(b, e)$

Since the minimum distance between the weights of slices with shape $b$ and $e$ is $4$, the generating function corresponding to this case:
\begin{align*}
&\bigg(\sum_{k\geq 1} \frac{q^2 \cdot q^{2k+4}}{(1+q^3) (1+q^{2k+3})(1+q^{2k+5})}\bigg)\cdot S(q) + \\
&\bigg( \sum_{m\geq1} \sum_{k\geq 1} \frac{q^{2k+2} \cdot q^{2m+2k+4}}{(1+q^{2k+1})(1+q^{2k+3})(1+q^{2m+2k+3})(1+q^{2m+2k+5})}\bigg)\cdot S(q).
\end{align*}

Case 3: $(e, e)$

As the minimum weight of a slice with shape $e$ is $4$, we have the following generating function for this case:
\begin{align*}
&\bigg( \sum_{k\geq 1} \frac{q^{2k+2} \cdot q^{2k+4}}{(1+q^{2k+1})(1+q^{2k+3})(1+q^{2k+5})}\bigg)\cdot S(q) + \\
&\bigg( \sum_{m\geq1} \sum_{k\geq 1} \frac{q^{2k+2} \cdot q^{2m+2k+4}}{(1+q^{2k+1})(1+q^{2k+3})(1+q^{2m+2k+3})(1+q^{2m+2k+5})}\bigg)\cdot S(q).
\end{align*}

Case 4: $(e, b)$

Since the minimum distance between the weights of slices with shape $e$ and $b$ is $4$ and the minimum weight of a slice with shape $e$ is $4$, the generating function for this case is:
$$\bigg( \sum_{m\geq1} \sum_{k\geq 1} \frac{q^{2k+2} \cdot q^{2m+2k+4}}{(1+q^{2k+1})(1+q^{2k+3})(1+q^{2m+2k+3})(1+q^{2m+2k+5})}\bigg)\cdot S(q).$$

Now, we will share some lemmas which will help us in the computation of the sums above. Since the proofs of these lemmas are almost the same as in the proof of Theorem \ref{theorem1}, they are omitted.

\begin{lemma}\ \label{lemma5}
For any integer $k \geq 0$, we have the following identity:
\begin{equation*} 
\frac{q^{2k+2}}{(1+q^{2k+1}) (1+q^{2k+3})}=\frac{q}{(1-q^2)} \cdot \bigg( \frac{-1}{1+q^{2k+1}} + \frac{1}{1+ q^{2k+3}} \bigg).    
\end{equation*}
For any integers $k \geq 0$, $n \geq 2$ we have the following identity,
\begin{multline*} 
\frac{q^{2k+2} \cdot q^{2k+4} \ldots q^{2k+2n}}{(1+q^{2k+1}) (1+q^{2k+3}) \ldots (1+q^{2k+2n+1})} \\
= \frac{q}{(1-q^{2n})} \bigg( \frac{-q^{2k+4}\cdot q^{2k+6} \ldots q^{2k+2n}}{(1+q^{2k+1})(1+q^{2k+3})\ldots(1+q^{2k+2n-1})} +
\frac{q^{2k+4}\cdot q^{2k+6} \ldots q^{2k+2n} }{(1+q^{2k+3})(1+q^{2k+5})\ldots(1+q^{2k+2n+1})} \bigg).
\end{multline*}
\end{lemma}

\begin{lemma} \label{lemma6}
For any integer $m \geq 1$, the following holds.
\begin{equation*}
\sum_{k \geq 1} \frac{q^{2k+2} \cdot q^{2k+4} \ldots q^{2k+2m}}{(1+q^{2k+1}) (1+q^{2k+3}) \ldots (1+q^{2k+2m+1})}=\frac{q^{2m}}{(1-q^{2m})} \cdot \frac{q^2 q^4 \ldots q^{2m}}{(1+q^3) (1+q^5) \ldots (1+q^{2m+1})}. 
 \end{equation*}
\end{lemma}

\begin{lemma} \label{lemma7}
For any integers $m_1, m_2 \geq 1$, we have
\begin{align*}
&\sum_{k_2 \geq 1} \sum_{k_1 \geq 1} \frac{q^{2k_1+2}  q^{2k_1+4} \ldots q^{2k_1+2m_1} q^{2k_1+2k_2+2m_1+2} q^{2k_1+2k_2+2m_1+4} \ldots }{(1+q^{2k_1+1}) (1+q^{2k_1+3}) \ldots (1+q^{2k_1+2m_1+1})(1+q^{2k_1+2k_2+2m_1+1})\ldots }\\
&\frac{q^{2k_1+2k_2+2m_1+2m_2-2} q^{2k_1+2k_2+2m_1+2m_2}}{(1+q^{2k_1+2k_2+2m_1+2m_2-1})(1+q^{2k_1+2k_2+2m_1+2m_2+1})}\\
&=\frac{q^{2(m_1 + m_2)}  q^{2m_2}}{(1-q^{2(m_1 + m_2)})(1-q^{2m_2})}\cdot \frac{ q^2 q^4 q^6 \ldots q^{2(m_1 + m_2)}} {(1+q^3)(1+q^5)(1+q^7) \ldots (1+q^{2(m_1+m_2)+1})}.
\end{align*}
\end{lemma}

\begin{lemma} \label{lemma8}
For any integer $m_i \geq 1$ where $i=1,2, \ldots, n$, the following equality holds:
\begin{align*} 
&\sum_{k_n \geq 1} \ldots \sum_{k_2 \geq 1} \sum_{k_1 \geq 1} \bigg(\frac{q^{2k_1+2} q^{2k_1+4} \ldots q^{2k_1+2m_1}}{(1+q^{2k_1+1})(1+q^{2k_1+3})\ldots (1+q^{2k_1+2m_1+1})}\bigg) \\
&\bigg( \frac{q^{2k_1+2k_2+(2m_1+2)}}{(1+q^{2k_1+2k_2+2m_1+1})}\frac{q^{2k_1+2k_2+(2m_1+4)} \ldots q^{2k_1+2k_2+(2m_1+2m_2)}}{(1+q^{2k_1+2k_2+2m_1+3})\ldots (1+q^{2k_1+2k_2+2m_1+2m_2+1})} \bigg) \\
&\bigg( \frac{q^{2k_1+2k_2+2k_3+(2m_1+2m_2+2)}}{(1+q^{2k_1+2k_2+2k_3+2m_1+2m_2+1})} \frac{q^{2k_1+2k_2+2k_3+(2m_1+2m_2+4)} \ldots }{(1+q^{2k_1+2k_2+2k_3+2m_1+2m_2+3})\ldots }  \ldots \\
&\frac{q^{2k_1+2k_2+2k_3+(2m_1+2m_2+2m_3)}}{(1+q^{2k_1+2k_2+2k_3+2m_1+2m_2+2m_3+1})} \bigg) \ldots \bigg( \frac{q^{2k_1+2k_2+\ldots+2k_n+(2m_1+2m_2+\ldots+2m_{n-1}+2)}}{(1+q^{2k_1+2k_2+\ldots+2k_n+(2m_1+2m_2+\ldots+2m_{n-1}+1)})}
\\
&\frac{q^{2k_1+2k_2+\ldots+2k_n+(2m_1+2m_2+\ldots+2m_{n-1}+4)} \ldots q^{2k_1+2k_2+\ldots+2k_n+(2m_1+2m_2+\ldots+2m_{n})}}{(1+q^{2k_1+2k_2+\ldots+2k_n+(2m_1+2m_2+\ldots+2m_{n-1}+3)})\ldots (1+q^{2k_1+2k_2+\ldots+2k_n+(2m_1+2m_2+\ldots+2m_{n-1}+2m_n+1)})}\bigg)\\
&\textbf{=}\frac{q^2 q^4 \ldots q^{2m_1+2m_2+ \ldots +2m_n}}{(1+q^3)(1+q^5)\ldots (1+q^{2m_1+2m_2+ \ldots +2m_n+1})} \times \\
&\frac{q^{2(m_1+m_2+\ldots+m_n)}}{1-q^{2(m_1+m_2+\ldots+m_n)}} 
\frac{q^{2(m_2+m_3+\ldots+m_n)}}{1-q^{2(m_2+m_3+\ldots+m_n)}}
\cdots \frac{q^{2(m_{n-1}+m_n)}}{1-q^{2(m_{n-1}+m_n)}} \frac{q^{2m_n}}{1-q^{2m_n}} 
\end{align*}
\end{lemma}

Now, using Lemma \ref{lemma8}, let us compute the sums for the cases $n=1$ and $n=2$. 

If $\underline{n=1}$:
$$ \underbrace{\bigg(\frac{q^2}{(1+q^3)} + \sum_{k\geq1} \frac{q^{2k+2}}{(1+q^{2k+1})(1+q^{2k+3})}\bigg)\cdot S(q)}_{\text{every possible size of a slice with shape b included}}= \frac{q^2}{(1+q^3)(1-q^2)}\cdot S(q)$$ 
$$ \underbrace{\bigg(\sum_{k\geq1} \frac{q^{2k+2}}{(1+q^{2k+1})(1+q^{2k+3})}\bigg)\cdot S(q)}_{\text{every possible size of a slice with shape e included}}=\frac{q^4}{(1+q^3)(1-q^2)}\cdot S(q)$$
Note that, because of Lemma \ref{lemma8}, there are two things that are not a coincidence: 
having the same denominators for two disjoint cases above and having the minimal weights coming from each disjoint case as a power of $q$. Recall that the minimal positive weight for a slice with shape $b$ is $2$, while the minimal weight for a slice with shape $e$ is $4$.   
So, the generating function for $n=1$ is:
$$ \frac{(q^2 + q^4)}{(1+q^3)(1-q^2)}\cdot S(q) = \frac{q^2 (1+q^2)}{(1+q^3)(1-q^2)}\cdot S(q)= \frac{q^{1\cdot (1+1)} \cdot (-q^2;q^2)_1}{(-q^3;q^2)_1 (q^2;q^2)_1}\cdot\frac{(-q^3;q^2)_{\infty}}{(q;q)_{\infty}}.$$
If $\underline{n=2}$, we have the following generating functions corresponding to each case:

\noindent Case 1: $(b,b)$
$$\bigg(\frac{q^2 \cdot q^4}{(1+q^3)(1+q^5)(1-q^2)(1-q^4)}\bigg)\cdot S(q),$$
Case 2: $(b, e)$
$$\bigg(\frac{q^2 \cdot q^6}{(1+q^3)(1+q^5)(1-q^2)(1-q^4)}\bigg)\cdot S(q),$$
Case 3: $(e, e)$
$$\bigg(\frac{q^4 \cdot q^6}{(1+q^3)(1+q^5)(1-q^2)(1-q^4)}\bigg)\cdot S(q),$$
Case 4: $(e, b)$
$$\bigg(\frac{q^4 \cdot q^8}{(1+q^3)(1+q^5)(1-q^2)(1-q^4)}\bigg)\cdot S(q).$$
So, the generating function for $n=2$ is
$$\bigg(\frac{q^2 \cdot q^4 \cdot (1+q^2)(1+q^4)}{(1+q^3)(1+q^5)(1-q^2)(1-q^4)}\bigg)\cdot S(q)= \frac{q^{2\cdot (2+1)} \cdot (-q^2;q^2)_2}{(-q^3;q^2)_2 (q^2;q^2)_2}\cdot\frac{(-q^3;q^2)_{\infty}}{(q;q)_{\infty}}.$$
Now, we will prove the following generating function for any $n$:
$$\bigg( \frac{q^{n\cdot(n+1)} \cdot (-q^2 ; q^2)_n }{(-q^3 ; q^2)_n \cdot (q^2 ; q^2)_n}\bigg) \cdot \frac{(-q^3 ; q^2)_{\infty}}{(q ; q)_{\infty}}. $$
Note that cases $n=1$ and $n=2$ were already proven above. It is easy to see that the base case $n=0$ also holds. We will proceed by induction on $n$. Now, supposing the above formula holds for $n$, we will prove it for $(n+1)$. 

Let us give you a summary of what happens when $n$ changes to $(n+1)$: First of all, the number of indices ($k_i$'s) that we see increases by one (changes from $n$ to $n+1$). So, by Lemma \ref{lemma8}, we multiply the denominator by $(1+q^{2(n+1)+1})(1-q^{2(n+1)})$. Recall that, in the numerator, we see the minimal weights coming from disjoint cases as a power of $q$. While $n$ changes to $(n+1)$ the number of disjoint cases we deal with changes from $2^n$ to $2^{n+1}$.

For a general $n$, the minimum weight comes from the case $\underbrace{(b, b, b, \ldots, b)}_{\text{$n$ many}}$ which is 
$$ 2 + 4 + 6 + \ldots + 2 \cdot n= n \cdot (n+1). $$
Starting with this case, how we compute the minimal weights for other cases is as follows: Let us work on $n=3$, for the sake of simplicity. Starting with $(b, b, b)$, the minimal triple is $(bq^2, bq^4, bq^6)$. If we change the biggest slice with shape $b$ with a slice with shape $e$ (as the minimum distance between weights of slices with shapes $b$ and $e$ is for $4$), it becomes $(bq^2, bq^4, eq^8)$, and we get  
$$\underbrace{(bq^2, bq^4,bq^6)}_{(q^2 \cdot q^4 \cdot q^6)} \rightarrow \underbrace{(bq^2, bq^4, bq^6), (bq^2, bq^4, eq^8)}_{(q^2 \cdot q^4 \cdot q^6) (1 + q^2)}.$$
Continuing with the cases $(bq^2, bq^4, bq^6), (bq^2, bq^4, eq^8)$, we will change the roles of $b$ and $e$ for the two biggest slices and we will get two more cases, that is, we will get $(bq^2, eq^6, eq^8), (bq^2, eq^6, bq^{10})$:
$$ \underbrace{(bq^2, bq^4, bq^6), (bq^2, bq^4, eq^8)}_{(q^2 \cdot q^4 \cdot q^6) (1 + q^2)} \rightarrow \underbrace{(bq^2, bq^4, bq^6), (bq^2, bq^4, eq^8),(bq^2, eq^6, eq^8), (bq^2, eq^6, bq^{10})}_{(q^2 \cdot q^4 \cdot q^6) (1 + q^2)(1+q^4)}.$$
Finally, starting with $(bq^2, bq^4, bq^6), (bq^2, bq^4, eq^8),(bq^2, eq^6, eq^8), (bq^2, eq^6, bq^{10})$, and interchanging the roles of $b$ and $e$ for all the slices, we will get the last cases: $(eq^4, eq^6, eq^8), (eq^4, eq^6, bq^{10}),(eq^4, bq^8, bq^{10}),$ $(eq^4, bq^8, eq^{12})$, so we obtain all the cases:

\noindent $(bq^2, bq^4, bq^6), (bq^2, bq^4, eq^8),(bq^2, eq^6, eq^8), (bq^2, eq^6, bq^{10}), \mathbf{(eq^4, eq^6, eq^8), (eq^4, eq^6, bq^{10})},$ \\
$ \mathbf{(eq^4, bq^8, bq^{10}), (eq^4, bq^8, eq^{12})}$.

One can easily see that the weights of the last four cases are the iterations of the weights of the first four cases, respectively. So, we finally get:  
$$(q^2 \cdot q^4 \cdot q^6) (1 + q^2)(1+q^4)(1+q^6).$$
We apply the same process for all $n$, to get all the minimal weights. So, when $n$ changes to $(n+1)$, the number of change in the roles of shapes $b$ and $e$ changes from $n$ to $n+1$. As a result, we multiply the numerator by $q^{2(n+1)} \cdot (1 + q^{2(n+1)})$.
Therefore, we obtain:
$$\bigg( \frac{q^{n\cdot(n+1)} \cdot (-q^2 ; q^2)_n \cdot q^{2(n+1)} \cdot (1 + q^{2(n+1)}) }{(-q^3 ; q^2)_n \cdot (q^2 ; q^2)_n \cdot (1+q^{2(n+1)+1})(1-q^{2.(n+1)})} \bigg) \cdot \frac{(-q^3 ; q^2)_{\infty}}{(q ; q)_{\infty}}. $$
This is nothing but
$$\bigg(\frac{q^{(n+1)(n+2)} \cdot (-q^2 ; q^2)_{n+1} }{(-q^3: q^2)_{n+1} \cdot (q^2 ; q^2)_{n+1}}\bigg) \cdot \frac{(-q^3 ; q^2)_{\infty}}{(q ; q)_{\infty}}.$$
\end{proof}

\begin{theorem} \label{theorem8}
Cylindric partitions with profile $c=(2,2)$ have the following generating function:
$$\bigg(1 + \sum_{n\geq 1} \frac{2\cdot q^{n(n+1)} \cdot (-q^2 ; q^2)_{n-1} }{(q^2 ; q^2)_n \cdot (-q ; q^2)_n}\bigg) \cdot \frac{(-q ; q^2)_{\infty}}{(q ; q)_{\infty}}. $$
\end{theorem}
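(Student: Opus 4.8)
The plan is to follow, almost verbatim, the template used for Theorem~\ref{theorem7} and Theorem~\ref{theorem1}, taking advantage of the fact that the profile $c=(2,2)$ is fixed by the shift $S$, so that its slice flow carries a reflection symmetry. First I would list the five shapes $(0),(1),(2),(3),(4)$, exhibit their minimal slices with the associated generating terms, record that for rank $r=2$ each shape carries a fixed weight parity, and draw the slice flow. The symmetry induced by $S$ is the complementation $\sigma\mapsto 4-\sigma$; it fixes $(2)$, interchanges $(0)\leftrightarrow(4)$ and $(1)\leftrightarrow(3)$, and preserves weights. I expect the ``backbone'' to consist of the shapes $(1),(2),(3)$ --- the two odd shapes, each of minimal weight $1$, together with the even shape $(2)$ of minimal weight $2$ --- with distinct-slice backbone product $P(q)=\prod_{k\ge 0}(1+aq^{2k+1}+dq^{2k+1})(1+cq^{2k+2})$; after allowing repetitions and setting $a=c=d=1$ this collapses to $S(q):=\dfrac{(-q;q^2)_\infty}{(q;q)_\infty}$, exactly as the two odd-weight backbone shapes in Theorem~\ref{theorem5} produced $(-q;q^2)_\infty$. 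The remaining shapes $(0)$ and $(4)$ are the ``extras'', both of minimal weight $2$ by the reflection symmetry; inserting a shape-$(0)$ or shape-$(4)$ slice at an even weight replaces one backbone factor and forces the deletion of two neighbouring odd backbone slices, in complete parallel with the roles of $b$ and $e$ for $c=(4,0)$ in Theorem~\ref{theorem7}.

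I would then stratify the cylindric partitions with this profile by the number $n$ of distinct sizes of extra slices that occur. For $n=0$ the generating function is $S(q)$, the $1$-term of the outer sum. For a fixed $n\ge 1$ there are $2^{n}$ disjoint cases, indexed by the choice of shape $(0)$ or shape $(4)$ at each of the $n$ occupied levels, subject to the minimum-distance conditions: distance $2$ between consecutive extra slices of the same shape, a strictly larger distance when they are of different shapes, and larger still whenever a new summation index is introduced, exactly as in the earlier proofs. The multi-sums arising in each case are of precisely the type handled by Lemmas~\ref{lemma5}--\ref{lemma8} (with at most a harmless shift of indices); since the manipulations are identical I would state the corresponding partial-fraction lemmas and omit their proofs. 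Applying them telescopes the $n$-th stratum into $\dfrac{N_n(q)}{(q^2;q^2)_n\,(-q;q^2)_n}\cdot S(q)$ for a numerator $N_n(q)$ still to be identified; direct computation of the cases $n=1,2$ should give $N_1=2q^{2}$ and $N_2=2q^{6}(1+q^{2})$, which fixes the indexing.

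The heart of the matter is the identification $N_n(q)=2\,q^{n(n+1)}(-q^2;q^2)_{n-1}$, which I would establish by induction on $n$. As in Theorem~\ref{theorem7} I would begin from the all-$(0)$ configuration, whose minimal weights $2,4,\dots,2n$ sum to $n(n+1)$, and then generate the remaining cases by successively flipping the top $j$ chosen slices from shape $(0)$ to shape $(4)$; the $j$-th such generation multiplies the running weight-sum by $1+q^{2j}$, owing to the larger distance a shape change forces. The decisive difference from $c=(4,0)$ is that here the two extra shapes share the same minimal weight, so the final ``interchange all $(0)\leftrightarrow(4)$'' generation --- which for $c=(4,0)$ shifted every weight by $2$ and contributed the factor $1+q^{2n}$ --- is now weight-preserving and contributes only the factor $1+q^{0}=2$; hence the flip-factors run over $j=1,\dots,n-1$ together with one free binary choice, giving $q^{n(n+1)}\cdot 2\prod_{j=1}^{n-1}(1+q^{2j})=2\,q^{n(n+1)}(-q^2;q^2)_{n-1}$. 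The induction step then reduces to verifying that passing from $n$ to $n+1$ multiplies the stratum by $\dfrac{q^{2(n+1)}(1+q^{2n})}{(1+q^{2n+1})(1-q^{2n+2})}$ and matching this against the ratio of the claimed closed forms; summing over $n\ge 0$ yields the asserted generating function. I expect the main obstacle to be precisely this last bookkeeping: checking that the $2^{n}$ weighted cases for profile $(2,2)$ organize into the symmetry-shifted telescoping pattern, so that the Pochhammer symbol is genuinely $(-q^2;q^2)_{n-1}$ and the factor $2$ appears without being over- or under-counted. Computing the base cases $n=0,1,2$ explicitly first, to pin down every index and distance condition, should be the safe way in.
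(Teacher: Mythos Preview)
Your proposal is correct and follows essentially the same route as the paper's proof: the same backbone/extra split (shapes $(1),(2),(3)$ versus $(0),(4)$), the same $S(q)=(-q;q^2)_\infty/(q;q)_\infty$, a shifted version of Lemmas~\ref{lemma5}--\ref{lemma8} (the paper records this as a separate Lemma~\ref{lemma9}), and the same induction on $n$ with the identical step multiplier $q^{2(n+1)}(1+q^{2n})\big/\big((1+q^{2n+1})(1-q^{2(n+1)})\big)$. Your identification of the key structural point --- that the reflection symmetry makes the final ``swap all'' flip weight-preserving, yielding the factor $2$ in place of the $(1+q^{2n})$ that appeared for $c=(4,0)$ --- is exactly the observation the paper uses.
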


\begin{proof}
Slices of cylindric partitions with profile $c = (2, 2)$ can have exactly $ \binom{\ell+r-1}{r-1}= $ $\binom{5}{1}$ shapes. The possible shapes are $(0), (1), (2), (3), (4)$ and we will denote these shapes by $d, a, c, b$ and $e$, respectively. Slices that have these shapes with minimum positive weight are shown below. 

\ytableausetup{centertableaux}
\ytableaushort
{\none,\none }
* {4,3}
* [*(gray)]{4,2} ~ ($aq^1$), \ytableausetup{centertableaux}
\ytableaushort
{\none,\none }
* {5,2}
* [*(gray)]{4,2}
~ ($bq^1$),
\ytableausetup{centertableaux}
\ytableaushort
{\none,\none }
* {5,3}
* [*(gray)]{4,2}  
~ ($cq^2$),

\hfill

\ytableausetup{centertableaux}
\ytableaushort
{\none,\none }
* {4,4}
* [*(gray)]{4,2} ~ ($dq^2$),
\ytableausetup{centertableaux}
\ytableaushort
{\none,\none }
* {6,2}
* [*(gray)]{4,2} ~ ($eq^2$).

Slices with shapes $a$ and $b$ can only have odd weights, while slices with shapes $c, d$ and $e$ can only have even weights. Slice flow for the given profile is:
\begin{center}
\begin{tikzcd}
                                        & \color{blue}{dq^2} \arrow[rd]           &                                         & \color{blue}{dq^4} \arrow[rd]           &                                         & \color{blue}{dq^6} \arrow[rd]           &                                         & \color{blue}{dq^8} \\
\color{red}{aq^1} \arrow[ru] \arrow[rd] &                                         & \color{red}{aq^3} \arrow[ru] \arrow[rd] &                                         & \color{red}{aq^5} \arrow[ru] \arrow[rd] &                                         & \color{red}{aq^7} \arrow[ru] \arrow[rd] &                    \\
                                        & \color{red}{cq^2} \arrow[rd] \arrow[ru] &                                         & \color{red}{cq^4} \arrow[rd] \arrow[ru] &                                         & \color{red}{cq^6} \arrow[rd] \arrow[ru] &                                         & \color{red}{cq^8}  \\
\color{red}{bq^1} \arrow[ru] \arrow[rd] &                                         & \color{red}{bq^3} \arrow[ru] \arrow[rd] &                                         & \color{red}{bq^5} \arrow[ru] \arrow[rd] &                                         & \color{red}{bq^7} \arrow[ru] \arrow[rd] &                    \\
                                        & \color{blue}{eq^2} \arrow[ru]           &                                         & \color{blue}{eq^4} \arrow[ru]           &                                         & \color{blue}{eq^6} \arrow[ru]           &                                         & \color{blue}{eq^8}

\end{tikzcd}\\
 {\small Table 5.2: Profile $c=(2,2)$}
 \captionlistentry[table]{Profile $c=(2,2)$}
\end{center}
Before continuing, let us share with you a lemma which we will need for computing the infinite sums in this proof. Note that, the new lemma is another version of Lemma \ref{lemma8} with a small shift.
\begin{lemma} \label{lemma9}
For any integer $m_i \geq 1$ where $i=1,2, \ldots, n$, the following equality holds:
\begin{align*} 
&\sum_{k_n \geq 1} \ldots \sum_{k_2 \geq 1} \sum_{k_1 \geq 1} \bigg(\frac{q^{2k_1} q^{2k_1+2} \ldots q^{2k_1+2m_1-2}}{(1+q^{2k_1-1})(1+q^{2k_1+1})\ldots (1+q^{2k_1+2m_1-1})}\bigg) \bigg( \frac{q^{2k_1+2k_2+(2m_1)}}{(1+q^{2k_1+2k_2+2m_1-1})}\\
&\frac{q^{2k_1+2k_2+(2m_1+2)} \ldots q^{2k_1+2k_2+(2m_1+2m_2-2)}}{(1+q^{2k_1+2k_2+2m_1+1})\ldots (1+q^{2k_1+2k_2+2m_1+2m_2-1})} \bigg) \bigg( \frac{q^{2k_1+2k_2+2k_3+(2m_1+2m_2)}}{(1+q^{2k_1+2k_2+2k_3+2m_1+2m_2-1})}\\
&\frac{q^{2k_1+2k_2+2k_3+(2m_1+2m_2+2)} \ldots q^{2k_1+2k_2+2k_3+(2m_1+2m_2+2m_3-2)}}{(1+q^{2k_1+2k_2+2k_3+2m_1+2m_2+1})\ldots (1+q^{2k_1+2k_2+2k_3+2m_1+2m_2+2m_3-1})} \bigg) \ldots \\
&\bigg(\frac{ q^{2k_1+2k_2+\ldots+2k_n+(2m_1+2m_2+\ldots+2m_{n-1})} q^{2k_1+2k_2+\ldots+2k_n+(2m_1+2m_2+\ldots+2m_{n-1}+2)}}{ (1+q^{2k_1+2k_2+\ldots+2k_n+(2m_1+2m_2+\ldots+2m_{n-1}-1)})(1+q^{2k_1+2k_2+\ldots+2k_n+(2m_1+2m_2+\ldots+2m_{n-1}+1)})}\\
&\frac{\ldots q^{2k_1+2k_2+\ldots+2k_n+(2m_1+2m_2+\ldots+2m_{n}-2)}}{\ldots (1+q^{2k_1+2k_2+\ldots+2k_n+(2m_1+2m_2+\ldots+2m_{n-1}+2m_n-1)})}\bigg)\\
&\textbf{=}\frac{q^0 q^2 \ldots q^{2m_1+2m_2+ \ldots +2m_n-2}}{(1+q^1)(1+q^3)\ldots (1+q^{2m_1+2m_2+ \ldots +2m_n-1})} \times \\
&\frac{q^{2(m_1+m_2+\ldots+m_n)}}{(1-q^{2(m_1+m_2+\ldots+m_n)})} 
\frac{q^{2(m_2+m_3+\ldots+m_n)}}{(1-q^{2(m_2+m_3+\ldots+m_n)})}
\cdots \frac{q^{2(m_{n-1}+m_n)}}{(1-q^{2(m_{n-1}+m_n)})} \frac{q^{2m_n}}{(1-q^{2m_n})}. 
\end{align*}
\end{lemma}
Consider the following product in which the missing slices and the restrictions are indicated:
$$(1+aq^1+bq^1) \underbrace{(1+cq^2)}_{\underbrace{aq^1 \leftarrow dq^2 \rightarrow aq^3}_{bq^1 \leftarrow eq^2 \rightarrow bq^3}} (1+aq^3+bq^3) \underbrace{(1+cq^4)}_{\underbrace{aq^3 \leftarrow dq^4 \rightarrow aq^5}_{bq^3 \leftarrow eq^4 \rightarrow bq^5}} (1+aq^5+bq^5) \ldots .$$
Note that, this structure is the same as in Theorem \ref{theorem7} except for the first factor $(1+aq^1+bq^1)$ and for the missing slices and their restrictions corresponding to the second factor $(1+cq^2)$.   

If we allow slices to repeat and put $a=b=c=1$, the generating function of cylindric partitions including shapes $a$, $b$ or $c$ (all shapes except $d$ and $e$) is the following:
\begin{equation} \label{S(q) c=(2,2)}
S(q):= \frac{\prod_{k\geq0} (1+q^{2k+1})}{\prod_{k\geq0} (1-q^{k+1})}=\frac{(-q;q^2)_{\infty}}{(q;q)_{\infty}}.
\end{equation}
Let $n$ denote the number of distinct slices with shape $d$ or $e$ that appear in the cylindric partitions with the given profile. We put $d=e=1$. 

When $n=0$, the generating function of cylindric partitions with the given profile is \eqref{S(q) c=(2,2)}.

When $n=1$, we will either have a slice with shape $d$ or a slice with shape $e$. The minimum weights of slices with shapes $d$ and $e$ are the same which is $2$. So, we have the following generating function for case $n=1$:  
$$ \bigg(2 \cdot \sum_{k\geq 1} \frac{q^{2k}}{(1+q^{2k-1})(1+q^{2k+1})}\bigg)\cdot S(q).$$
This is nothing but
$$ \frac{2 \cdot q^2}{(1+q^1)(1-q^2)} \cdot S(q)=\frac{2\cdot q^{1(1+1)} \cdot (-q^2 ; q^2)_{1-1} }{(q^2 ; q^2)_1 \cdot (-q ; q^2)_1} \cdot \frac{(-q ; q^2)_{\infty}}{(q ; q)_{\infty}}.$$
When $n=2$, since the minimum distance between the weights of slices with shapes $d$ and $e$ is $4$, and slices with these shapes have the same minimum weight, for cases $(d,d)$ and $(e,e)$, also for cases $(d,e)$ and $(e,d)$ we have the same generating functions.

Case $(d,d)$: If weights of slices $(d,d)$ are $2k$ and $2k+2$ for $k\geq1$, the generating function including this duo is
$$\bigg( \sum_{k\geq 1} \frac{q^{2k} \cdot q^{2k+2}}{(1+q^{2k-1})(1+q^{2k+1})(1+q^{2k+3})}\bigg)\cdot S(q)=\frac{q^6}{(1+q^1)(1+q^3)(1-q^4)}\cdot S(q),$$
if weights of slices $(d,d)$ are $2k$ and $\geq2k+4$ for $k\geq1$, the generating function including this duo is

\begin{align*}
&\bigg(\sum_{m\geq1} \sum_{k\geq1} \frac{q^{2k} \cdot q^{2m+2k+2}}{(1+q^{2k-1})(1+q^{2k+1})(1+q^{2m+2k+1})(1+q^{2m+2k+3})}\bigg)\cdot S(q) \\
&=\frac{q^6}{(1+q^1)(1+q^3)(1-q^4)} \cdot \frac{q^2}{(1-q^2)}\cdot S(q).
\end{align*}
So, the generating function for case $(d,d)$ is the following:
$$\frac{q^6}{(1+q^1)(1+q^3)(1-q^2)(1-q^4)} \cdot S(q).$$
For case $(d, e)$, we have the following generating function:
\begin{align*}
&\bigg( \sum_{m\geq1} \sum_{k\geq 1} \frac{q^{2k} \cdot q^{2m+2k+2}}{(1+q^{2k-1})(1+q^{2k+1})(1+q^{2m+2k+1})(1+q^{2m+2k+3})}\bigg)\cdot S(q).\\
&=\frac{q^8}{(1+q^1)(1+q^3)(1-q^2)(1-q^4)}\cdot S(q).
\end{align*}
Therefore, the generating function for case $n=2$ is
$$\frac{2 \cdot q^6 (1 + q^2)}{(1+q^1)(1+q^3)(1-q^2)(1-q^4)} \cdot S(q)=\frac{2\cdot q^{2(2+1)} \cdot (-q^2 ; q^2)_{2-1} }{(q^2 ; q^2)_2 \cdot (-q ; q^2)_2} \cdot \frac{(-q ; q^2)_{\infty}}{(q ; q)_{\infty}}.$$
As before, we proceed by induction on $n$. The base case $n=1$ and case $n=2$ were done above.   

This time, when $n$ changes to $(n+1)$, the following happens: The number of indices ($k_i$'s) that we see changes from $n$ to $(n+1)$. So, by Lemma \ref{lemma9}, we multiply the denominator by $(1+q^{2n+1})(1-q^{2(n+1)})$. Recall that, in the numerator, we see the minimal weights coming from disjoint cases as a power of $q$. 

For a general $n$, the minimum weight comes from case $\underbrace{(d, d, d, \ldots, d)}_{\text{$n$ many}}$ is 
$$ 2 + 4 + 6 + \ldots + 2 \cdot n= n \cdot (n+1). $$
Starting with this case, how we compute the minimal weights for other cases is as follows: Let us work on $n=3$, as before. Starting with $(d, d, d)$, the minimal triple is $(dq^2, dq^4, dq^6)$. If we change the biggest slice with shape $d$ with a slice with shape $e$ (as the minimum distance between weights of slices with shapes $d$ and $e$ is $4$), it becomes $(dq^2, dq^4, eq^8)$, and we get  
$$\underbrace{(dq^2, dq^4,dq^6)}_{(q^2 \cdot q^4 \cdot q^6)} \rightarrow \underbrace{(dq^2, dq^4, dq^6), (dq^2, dq^4, eq^8)}_{(q^2 \cdot q^4 \cdot q^6) (1 + q^2)}.$$
Continuing with the cases $(dq^2, dq^4, dq^6), (dq^2, dq^4, eq^8)$, we will change the roles of $d$ and $e$ for the two biggest slices and we will get two more cases, that is, we will get $(dq^2, eq^6, eq^8), (dq^2, eq^6, dq^{10})$:
$$ \underbrace{(dq^2, dq^4, dq^6), (dq^2, dq^4, eq^8)}_{(q^2 \cdot q^4 \cdot q^6) (1 + q^2)} \rightarrow \underbrace{(dq^2, dq^4, dq^6), (dq^2, dq^4, eq^8),(dq^2, eq^6, eq^8), (dq^2, eq^6, dq^{10})}_{(q^2 \cdot q^4 \cdot q^6) (1 + q^2)(1+q^4)}.$$
All of the following cases are the cases that starts with a slice with shape $d$:  $(dq^2, dq^4, dq^6), (dq^2, dq^4, eq^8),$ $(dq^2, eq^6, eq^8), (dq^2, eq^6, dq^{10})$.

Changing the roles of $d$ and $e$ for above slices, we get the left cases: $(eq^2, eq^4, eq^6), (eq^2, eq^4, dq^{8}),$ $(eq^2, dq^6, dq^{8}), (eq^2, dq^6, eq^{10})$, then we obtain all the cases:

$(dq^2, dq^4, dq^6), (dq^2, dq^4, eq^8),(dq^2, eq^6, eq^8), (dq^2, eq^6, dq^{10}), \mathbf{(eq^2, eq^4, eq^6), (eq^2, eq^4, dq^{8})},$ \\
 $ \mathbf{(eq^2, dq^6, dq^{8}), (eq^2, dq^6, eq^{10})}$.

In this last step, since the roles of $d$ and $e$ are symmetric, no iteration is needed. We just multiply the previous step by $2$. So, finally we get:  
$$2 \cdot (q^2 \cdot q^4 \cdot q^6) (1 + q^2)(1+q^4).$$
We apply the same process for all $n$, to get the minimal weights. So, when $n$ changes to $(n+1)$, we multiply the numerator by $q^{2(n+1)} \cdot (1 + q^{2n})$.
Therefore, we obtain:
$$\bigg( \frac{2 \cdot q^{n\cdot(n+1)} \cdot (-q^2 ; q^2)_{n-1} \cdot q^{2(n+1)} \cdot (1 + q^{2n}) }{(-q ; q^2)_n \cdot (q^2 ; q^2)_n \cdot (1+q^{2n+1})(1-q^{2(n+1)})} \bigg) \cdot \frac{(-q ; q^2)_{\infty}}{(q ; q)_{\infty}}. $$
And, this is what we want:
$$\bigg(\frac{2 \cdot q^{(n+1)(n+2)} \cdot (-q^2 ; q^2)_{n} }{(-q ; q^2)_{n+1} \cdot (q^2 ; q^2)_{n+1}}\bigg) \cdot \frac{(-q ; q^2)_{\infty}}{(q ; q)_{\infty}}.$$
\end{proof}

\begin{theorem} \label{theorem9}
Cylindric partitions with profile $c=(3,1)$ have the following generating function:
$$\bigg(\sum_{n\geq 0} \frac{q^{n^2}}{(q^2; q^2)_n }\bigg) \cdot \frac{(-q^2 ; q^2)_{\infty}}{(q ; q)_{\infty}}. $$
\end{theorem}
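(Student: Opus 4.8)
The proof will follow the template established for the rank\nobreakdash-$2$ cases of levels $3$ and $4$ (Theorems~\ref{theorem1}, \ref{theorem7}, \ref{theorem8}). First I would set up the combinatorics for the profile $c=(3,1)$, which has rank $r=2$ and level $\ell=4$, so there are $\binom{\ell+r-1}{r-1}=\binom{5}{1}=5$ shapes $(0),(1),(2),(3),(4)$. I would exhibit the minimal positive\nobreakdash-weight slice of each shape together with its generating term, label the shapes by letters $a,b,c,d,e$, and record the residue modulo $2$ of the weights each shape can carry (as usual this residue is forced, since larger slices of a given shape are produced by adding the same number of weight\nobreakdash-$1$ squares to each of the $r=2$ rows). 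Then I would draw the slice flow for $c=(3,1)$ — again a ``path\nobreakdash-shaped'' Hasse diagram on five rows of nodes — and read off from it the placement and minimum\nobreakdash-distance restrictions between slices of neighbouring shapes.

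Next I would separate the slices into a \emph{base} family, consisting of the slices whose shapes impose no restriction beyond the obvious containment rule, and one distinguished \emph{extra} shape (say shape $a$) of minimal weight $1$ occurring only at odd weights, exactly as shape $a$ functioned in the proof of Theorem~\ref{theorem1}. Allowing repetitions and setting all shape variables equal to $1$, the base family should contribute precisely the infinite product $S(q)=\frac{(-q^{2};q^{2})_{\infty}}{(q;q)_{\infty}}$ appearing in the statement; this is a short manipulation of $q$\nobreakdash-Pochhammer symbols once the flow is in hand. The remaining task is to show that including $n$ distinct sizes of the extra shape multiplies $S(q)$ by $\frac{q^{n^{2}}}{(q^{2};q^{2})_{n}}$, so that summing over $n\ge 0$ gives $\bigl(\sum_{n\ge 0}\frac{q^{n^{2}}}{(q^{2};q^{2})_{n}}\bigr)S(q)$. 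For this I would first prove the required telescoping/partial\nobreakdash-fraction lemmas — the analogues of Lemmas~\ref{lem1}--\ref{lem4} (equivalently \ref{lemma5}--\ref{lemma8}) — which evaluate the nested geometric\nobreakdash-type sums coming from the ``distance $2$ versus distance $>2$'' bookkeeping among the extra slices and their forbidden neighbours; these are again routine inductions and can be stated with proofs omitted. With the lemmas available, an induction on $n$ finishes the argument: the passage from $n$ to $n+1$ should multiply the contribution by $\frac{q^{2n+1}}{1-q^{2n+2}}$, and since $(q^{2};q^{2})_{n+1}=(q^{2};q^{2})_{n}(1-q^{2n+2})$ and $q^{(n+1)^{2}}=q^{n^{2}}q^{2n+1}$, this is exactly the step $\frac{q^{n^{2}}}{(q^{2};q^{2})_{n}}\,S(q)\longmapsto\frac{q^{(n+1)^{2}}}{(q^{2};q^{2})_{n+1}}\,S(q)$.

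The main obstacle is precisely the analysis of the restriction structure in the $c=(3,1)$ slice flow: determining which slices become forbidden immediately before and immediately after an inserted extra\nobreakdash-shape slice, and then checking that the $2^{n}$ cases (indexed by which inter\nobreakdash-slice distances are exactly $2$) assemble, via the lemmas, into the single clean factor $\frac{q^{n^{2}}}{(q^{2};q^{2})_{n}}$. The difference between this theorem and Theorem~\ref{theorem1} is visible in the denominators: here the per\nobreakdash-step multiplier is $\frac{q^{2n+1}}{1-q^{2n+2}}$ rather than $\frac{q^{2n+1}}{(1+q^{2n+2})(1-q^{2n+2})}$, so inserting an extra slice must \emph{not} consume one of the $(1+q^{2k})$ factors carried by the base — in other words, the flow for $c=(3,1)$ must have a milder restriction at the inserted slice than the one for $c=(2,1)$. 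Verifying this milder restriction, and tracking its effect through the induction, is where the real work lies; everything else is manipulation of generating functions of the same flavour as in the preceding proofs. (As a sanity check, by \eqref{gasperidentity} with $q\to q^{2}$, $z\to q$ one has $\sum_{n\ge 0}\frac{q^{n^{2}}}{(q^{2};q^{2})_{n}}=(-q;q^{2})_{\infty}$, so the claimed answer equals $\frac{(-q;q^{2})_{\infty}(-q^{2};q^{2})_{\infty}}{(q;q)_{\infty}}=\frac{(-q;q)_{\infty}}{(q;q)_{\infty}}$, which agrees with the right\nobreakdash-hand side of \eqref{c=(3,1)}.)
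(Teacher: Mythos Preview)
Your overall template is right, but the key mechanism you propose for the induction step is not what actually happens in the $c=(3,1)$ flow, and following your plan you would get stuck.

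You posit a single distinguished extra shape and conjecture that the per-step multiplier $\frac{q^{2n+1}}{1-q^{2n+2}}$ arises because ``inserting an extra slice must \emph{not} consume one of the $(1+q^{2k})$ factors carried by the base'' --- i.e.\ a milder restriction than in Theorem~\ref{theorem1}. In fact the restriction is \emph{the same}: there are \emph{two} extra shapes, $a$ (minimal weight $1$) and $e$ (minimal weight $3$), sitting at the top and bottom of the flow, and the base family is $b,c,d$ with $S(q)=\prod_{k\ge 0}(1+\tfrac{bq^{2k+1}}{1-bq^{2k+1}})(1+\tfrac{cq^{2k+2}}{1-cq^{2k+2}}+\tfrac{dq^{2k+2}}{1-dq^{2k+2}})\big|_{b=c=d=1}=\frac{(-q^2;q^2)_\infty}{(q;q)_\infty}$. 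Inserting $aq^{2k+1}$ forbids $dq^{2k}$ and $dq^{2k+2}$, while inserting $eq^{2k+1}$ forbids $cq^{2k}$ and $cq^{2k+2}$; so each insertion \emph{does} kill a $(1+q^{2k})$ factor, and by Lemma~\ref{lem4} the denominator acquires $(1+q^{2(n+1)})(1-q^{2(n+1)})$ at each step, exactly as in Theorem~\ref{theorem1}.

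What produces the simpler $(q^2;q^2)_n$ rather than $(q^4;q^4)_n$ is a cancellation in the \emph{numerator}: with two extra shapes available, one must also sum over which of $a,e$ occupies each of the $n$ slots, precisely as in Theorems~\ref{theorem7} and \ref{theorem8}. The minimal tuple is $(aq^1,aq^3,\ldots,aq^{2n-1})$ with weight $n^2$, and the $a\leftrightarrow e$ role-swapping (with the distance-$4$ rule between unlike shapes) contributes a factor $(1+q^{2(n+1)})$ to the numerator when passing from $n$ to $n+1$. This cancels the unwanted $(1+q^{2(n+1)})$ in the denominator, leaving the net multiplier $\frac{q^{2n+1}}{1-q^{2(n+1)}}$ you were aiming for. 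So the correct picture is ``same restriction as $c=(2,1)$, but two extra shapes instead of one,'' not ``one extra shape with a milder restriction.''
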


\begin{proof}
Slices of cylindric partitions with profile $c = (3, 1)$ can have exactly $ \binom{\ell+r-1}{r-1}= $ $\binom{5}{1}$ shapes. The possible shapes are $(0), (1), (2), (3), (4)$ and we will denote these shapes by $a, c, b, d$ and $e$, respectively. Slices that have these shapes with minimum positive weight are shown below. 

\ytableausetup{centertableaux}
\ytableaushort
{\none,\none }
* {4,4}
* [*(gray)]{4,3} ~ ($aq^1$), \ytableausetup{centertableaux}
\ytableaushort
{\none,\none }
* {5,3}
* [*(gray)]{4,3}
~ ($bq^1$),
\ytableausetup{centertableaux}
\ytableaushort
{\none,\none }
* {5,4}
* [*(gray)]{4,3}  
~ ($cq^2$),

\hfill

\ytableausetup{centertableaux}
\ytableaushort
{\none,\none }
* {6,3}
* [*(gray)]{4,3} ~ ($dq^2$),
\ytableausetup{centertableaux}
\ytableaushort
{\none,\none }
* {7,3}
* [*(gray)]{4,3} ~ ($eq^3$).

Slices with shapes $a, b$ and $e$ can only have odd weights, while slices with shapes $c$ and $d$ can only have even weights. Slice flow for the given profile is the following:
\begin{center}
\begin{tikzcd}
\color{blue}{aq^1} \arrow[rd]           &                                         & \color{blue}{aq^3} \arrow[rd]           &                                         & \color{blue}{aq^5} \arrow[rd]           &                                         & \color{blue}{aq^7} \arrow[rd]           &                   \\
                                        & \color{red}{cq^2} \arrow[rd] \arrow[ru] &                                         & \color{red}{cq^4} \arrow[rd] \arrow[ru] &                                         & \color{red}{cq^6} \arrow[rd] \arrow[ru] &                                         & \color{red}{cq^8} \\
\color{red}{bq^1} \arrow[ru] \arrow[rd] &                                         & \color{red}{bq^3} \arrow[ru] \arrow[rd] &                                         & \color{red}{bq^5} \arrow[ru] \arrow[rd] &                                         & \color{red}{bq^7} \arrow[ru] \arrow[rd] &                   \\
                                        & \color{red}{dq^2} \arrow[ru] \arrow[rd] &                                         & \color{red}{dq^4} \arrow[ru] \arrow[rd] &                                         & \color{red}{dq^6} \arrow[ru] \arrow[rd] &                                         & \color{red}{dq^8} \\
                                        &                                         & \color{blue}{eq^3} \arrow[ru]           &                                         & \color{blue}{eq^5} \arrow[ru]           &                                         & \color{blue}{eq^7} \arrow[ru]           &               \end{tikzcd}\\
 {\small Table 5.3: Profile $c=(3,1)$}
 \captionlistentry[table]{Profile $c=(3,1)$}
\end{center}

Now, we will work on the following product in which the missing slices and the restrictions are indicated:
\begin{align*}
&\underbrace{(1+bq^1)}_{aq^1 \rightarrow (1 + cq^2)}\cdot(1+cq^2 +dq^2) \cdot \underbrace{(1+bq^3)}_{\underbrace{(1+cq^2) \leftarrow aq^3 \rightarrow (1+cq^4)}_{(1+dq^2) \leftarrow eq^3 \rightarrow (1+dq^4)}} \cdot (1+cq^4 +dq^4) \cdot\\
&\underbrace{(1+bq^5)}_{\underbrace{(1+cq^4) \leftarrow aq^5 \rightarrow (1+cq^6)}_{(1+dq^4) \leftarrow eq^5 \rightarrow (1+dq^6)}} \cdot (1+cq^6+dq^6) \underbrace{(1+bq^7)}_{\underbrace{(1+cq^6) \leftarrow aq^7 \rightarrow (1+cq^8)}_{(1+dq^6) \leftarrow eq^7 \rightarrow (1+dq^8)}} \ldots
\end{align*}
Now, let us define the following product which is the generating function of cylindric partitions created by slices with shapes $b, c$ or $d$, that is, shapes $a$ and $e$ are not included, when repetition of slices are not allowed:
\begin{equation} \label{P(q) c=(3,1)}
P(q):= \prod_{k \geq 0} (1 +bq^{2k+1})(1+cq^{2k+2}+dq^{2k+2}).
\end{equation}
In the product above, for the weight $1$ there is only one missing slice, namely $aq^1$ which can be followed by with slice $cq^2$. Except this case, for every weight $(2k+3)$, there are two missing slices: $aq^{2k+3}$ and $eq^{2k+3}$ for $k \geq 0$. If we put $aq^{2k+3}$ in place of $(1+bq^{2k+3})$ then we need to erase the slices $dq^{2k+2}$ and $dq^{2k+4}$, while if we put $eq^{2k+3}$ in place of $(1+bq^{2k+3})$ then we need to erase the slices $cq^{2k+2}$ and $cq^{2k+4}$. We will proceed as in the previous proofs of this section. Allowing slices to repeat and then putting $b=c=d=1$, \eqref{P(q) c=(3,1)} becomes
\begin{equation}  \label{S(q) c=(3,1)}
S(q):= \frac{\prod_{k\geq0} (1+q^{2k+2})}{\prod_{k\geq0}(1-q^{k+1})}=\frac{(-q^2;q^2)_{\infty}}{(q;q)_{\infty}}.
\end{equation}
Let $n$ denote the number of distinct slices with shapes $a$ or $e$ that appear (with any repetition) in the cylindric partitions with the given profile. As usual, we will proceed by induction on $n$. 

When $n=0$, the generating function of cylindric partitions with the given profile is \eqref{S(q) c=(3,1)}. 

When $n=1$, we will either have a slice with shape $a$ ($aq^{2k+1}$ for some $k\geq 0$), or a slice with shape $e$ ($eq^{2k+3}$ for some $k\geq 0$). So, the generating function for $n=1$ is
$$ \underbrace{\bigg(\frac{q^1}{(1+q^2)} + \sum_{k\geq1} \frac{q^{2k+1}}{(1+q^{2k})(1+q^{2k+2})}\bigg)\cdot S(q)}_{\text{every possible size of a slice with shape a included}} + \underbrace{\bigg(\sum_{k\geq1} \frac{q^{2k+1}}{(1+q^{2k})(1+q^{2k+2})}\bigg)\cdot S(q)}_{\text{every possible size of a slice with shape e included}}.$$
Using Lemma \ref{lem4}, for case $n=1$ we obtain:
$$\frac{q^1 (1+ q^2)}{(1+q^2)(1-q^2)} \cdot S(q)= \frac{q^{1^2}}{(q^2;q^2)_1} \cdot \frac{(-q^2;q^2)_{\infty}}{(q;q)_{\infty}}.$$
Suppose the following generating function holds for any $n$:
$$\bigg( \frac{q^{n^2}}{(q^2 ; q^2)_n}\bigg) \cdot \frac{(-q^2 ; q^2)_{\infty}}{(q ; q)_{\infty}}. $$
While $n$ changes to $(n+1)$, by Lemma \ref{lem4}, we multiply the denominator by $(1+q^{2(n+1)})(1-q^{2(n+1)})$ and in the numerator, we see the minimal weights coming from disjoint cases as a power of $q$. Since for a general $n$, the minimum weight comes from the case $\underbrace{(a, a, a, \ldots, a)}_{\text{$n$ many}}$ which is 
$$ 1 + 3 + 5 + \ldots + (2 n -1)= n^2, $$ 
while $n$ changes to $(n+1)$, we multiply the numerator by $q^{2n+1} \cdot (1 + q^{2(n+1)})$, and we obtain
$$\bigg( \frac{q^{n^2}}{(q^2 ; q^2)_n}\bigg) \cdot \frac{q^{2n+1}  (1 + q^{2(n+1)})}{(1+q^{2(n+1)})(1-q^{2.(n+1)})} \cdot \frac{(-q^2 ; q^2)_{\infty}}{(q ; q)_{\infty}}=\bigg( \frac{q^{(n+1)^2}}{(q^2 ; q^2)_{n+1}}\bigg) \cdot \frac{(-q^2 ; q^2)_{\infty}}{(q ; q)_{\infty}}.$$
\end{proof}

We will continue our work with the rank $4$ - level $2$ cylindric partitions. The proofs of the following theorems will be omitted, as the the proofs of Theorem \ref{theorem10}, Theorem \ref{theorem11}, and Theorem \ref{theorem12} are the same as the proofs of Theorem \ref{theorem7}, Theorem \ref{theorem8}, and Theorem \ref{theorem9}, respectively.

\begin{theorem} \label{theorem10}
Cylindric partitions with profile $c=(2,0,0,0)$, have the following generating function:

$$\bigg(\sum_{n\geq 0} \frac{q^{n(n+1)} \cdot (-q^2 ; q^2)_n }{(-q^3 ; q^2)_n \cdot (q^2 ; q^2)_n}\bigg) \cdot \frac{(-q^3 ; q^2)_{\infty}}{(q ; q)_{\infty}}. $$ 
\end{theorem}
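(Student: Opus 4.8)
The plan is to re-derive the generating function directly from the slice flow of the profile $c=(2,0,0,0)$, following the proof of Theorem~\ref{theorem7} step for step; this is consistent with the rank--level duality, under which $F_{(2,0,0,0)}=F_{(4,0)}$, so the outcome must coincide with the one found there. First I would list the $\binom{\ell+r-1}{r-1}=\binom{5}{3}=10$ shapes for rank $r=4$, level $\ell=2$, draw the minimal (smallest positive weight) slice for each shape, and record which residue class modulo $4$ the weights of each shape must lie in. Then I would build the slice flow, recording for each slice $xq^{n}$ precisely which slices $yq^{n+1}$ may immediately follow it.

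Next I would single out the ``backbone'' collection of shapes whose slices interleave with no mutual restrictions --- the analogue of shapes $a,c,d$ in the proof of Theorem~\ref{theorem7} --- and write the product $P(q)$ generating cylindric partitions built from backbone slices only, without repetitions. After allowing repetitions and setting every letter equal to $1$, this product must collapse to $S(q)=\dfrac{(-q^{3};q^{2})_{\infty}}{(q;q)_{\infty}}$, exactly as in~\eqref{S(q) c=(4,0)}. The two remaining shape families --- the counterparts of $b$ and $e$, of minimal weights $2$ and $4$ respectively, each carrying the ``delete the two neighbouring backbone slices'' restriction --- are then handled by stratifying the count according to $n$, the number of distinct sizes among the slices of these two families that occur, just as in the $c=(4,0)$ argument.

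For each fixed $n$, the contribution is a finite sum of nested geometric-type series, which I would evaluate using Lemmas~\ref{lemma5}--\ref{lemma8} (the same telescoping identities as in Theorem~\ref{theorem7}); the result for the stratum is $\dfrac{q^{n(n+1)}\,(-q^{2};q^{2})_{n}}{(-q^{3};q^{2})_{n}\,(q^{2};q^{2})_{n}}\cdot S(q)$. I would verify the base cases $n=0,1,2$ directly and then run the induction on $n$: passing from $n$ to $n+1$ introduces one new summation index, so by Lemma~\ref{lemma8} the denominator picks up a factor $(1+q^{2(n+1)+1})(1-q^{2(n+1)})$, while the new layer of ``$b$ versus $e$'' choices multiplies the numerator by $q^{2(n+1)}(1+q^{2(n+1)})$; combining these reproduces the $(n+1)$-st summand. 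Summing over $n\geq 0$ yields the claimed generating function.

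The only non-routine part is the first step: confirming that the $10$-shape slice flow of $c=(2,0,0,0)$ genuinely reduces to the same combinatorial skeleton as the $5$-shape flow of $c=(4,0)$ --- that after removing the backbone one is left with exactly two special shape families, of minimal weights $2$ and $4$, whose placement rules match those of $b$ and $e$. Once this identification is made, the lemmas, the stratification by $n$, and the induction all transfer verbatim from the proof of Theorem~\ref{theorem7}, which is why the details are omitted.
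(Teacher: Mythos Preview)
Your proposal is correct and matches the paper's approach exactly: the paper also lists the ten shapes, their minimal slices and residue classes, displays the slice flow, observes that it has the same structure as the flow for $c=(4,0)$, and then declares that the proof of Theorem~\ref{theorem7} carries over verbatim. Your only slight looseness is speaking of ``two remaining shape families'' --- in rank $4$ the top and bottom rows of the flow each cycle through several shape labels rather than a single one --- but since the graph structure is identical to the $c=(4,0)$ flow this does not affect the argument.
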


\begin{proof}
Slices of cylindric partitions with profile $c = (2, 0, 0, 0)$ can have exactly $ \binom{\ell+r-1}{r-1}= $ $\binom{5}{3}$ shapes. The possible shapes are $(0,0,0)$, $(1,0,0)$, $(1,1,0)$, $(1,1,1)$, $(2,0,0)$, $(2,1,0)$, $(2,1,1)$, $(2,2,0)$, $(2,2,1)$, $(2,2,2)$. We will denote these shapes by $l, a, b, d, c, e, h, m, k$ and $n$, respectively. Slices that have these shapes with minimum positive weight are shown below. 

\ytableausetup{centertableaux}
\ytableaushort
{\none,\none }
* {3,2,2,2}
* [*(gray)]{2,2,2,2} ~ ($aq^1$), 
\ytableausetup{centertableaux}
\ytableaushort
{\none,\none }
* {3,3,2,2}
* [*(gray)]{2,2,2,2} ~ ($bq^2$), 
\ytableausetup{centertableaux}
\ytableaushort
{\none,\none }
* {4,2,2,2}
* [*(gray)]{2,2,2,2} ~ ($cq^2$), 
\ytableausetup{centertableaux}
\ytableaushort
{\none,\none }
* {3,3,3,2}
* [*(gray)]{2,2,2,2} ~ ($dq^3$), 

\hfill

\ytableausetup{centertableaux}
\ytableaushort
{\none,\none }
* {4,3,2,2}
* [*(gray)]{2,2,2,2} ~ ($eq^3$), 
\ytableausetup{centertableaux}
\ytableaushort
{\none,\none }
* {4,3,3,2}
* [*(gray)]{2,2,2,2} ~ ($hq^4$), 
\ytableausetup{centertableaux}
\ytableaushort
{\none,\none }
* {3,3,3,3}
* [*(gray)]{2,2,2,2} ~ ($lq^4$), 

\hfill

\ytableausetup{centertableaux}
\ytableaushort
{\none,\none }
* {4,4,2,2}
* [*(gray)]{2,2,2,2} ~ ($mq^4$), 
\ytableausetup{centertableaux}
\ytableaushort
{\none,\none }
* {4,4,3,2}
* [*(gray)]{2,2,2,2} ~ ($kq^5$), 
\ytableausetup{centertableaux}
\ytableaushort
{\none,\none }
* {4,4,4,2}
* [*(gray)]{2,2,2,2} ~ ($nq^6$). 

Slices with shapes $a$ and $k$ have weights congruent to $1$ $\pmod{4}$; slices with shapes $b, c$ and $n$ have weights congruent to $2$ $\pmod{4}$; slices with shapes $d$ and $e$ have weights congruent to $3$ $\pmod{4}$, finally slices with shapes $h, l$ and $m$ have weights congruent to $0$ $\pmod{4}$. Slice flow for the given profile is the same as the slice flow for profile $c=(4,0)$ and can be found below.
\begin{center}
\begin{tikzcd}
                                        & \color{blue}{cq^2} \arrow[rd]           &                                         & \color{blue}{mq^4} \arrow[rd]           &                                         & \color{blue}{nq^6} \arrow[rd]           &                                         & \color{blue}{lq^8} \\
\color{red}{aq^1} \arrow[ru] \arrow[rd] &                                         & \color{red}{eq^3} \arrow[ru] \arrow[rd] &                                         & \color{red}{kq^5} \arrow[ru] \arrow[rd] &                                         & \color{red}{dq^7} \arrow[ru] \arrow[rd] &                    \\
                                        & \color{red}{bq^2} \arrow[rd] \arrow[ru] &                                         & \color{red}{hq^4} \arrow[rd] \arrow[ru] &                                         & \color{red}{bq^6} \arrow[rd] \arrow[ru] &                                         & \color{red}{hq^8}  \\
                                        &                                         & \color{red}{dq^3} \arrow[ru] \arrow[rd] &                                         & \color{red}{aq^5} \arrow[ru] \arrow[rd] &                                         & \color{red}{eq^7} \arrow[ru] \arrow[rd] &                    \\
                                        &                                         &                                         & \color{blue}{lq^4} \arrow[ru]           &                                         & \color{blue}{cq^6} \arrow[ru]           &                                         & \color{blue}{mq^8}

\end{tikzcd}\\
 {\small Table 5.4: Profile $c=(2,0,0,0)$}
 \captionlistentry[table]{Profile $c=(2,0,0,0)$}
\end{center}

\end{proof}

\begin{theorem} \label{theorem11}
Cylindric partitions with profile $c=(1,0,1,0)$ have the following generating function:
$$\bigg(1 + \sum_{n\geq 1} \frac{2\cdot q^{n(n+1)} \cdot (-q^2 ; q^2)_{n-1} }{(q^2 ; q^2)_n \cdot (-q ; q^2)_n}\bigg) \cdot \frac{(-q ; q^2)_{\infty}}{(q ; q)_{\infty}}. $$
\end{theorem}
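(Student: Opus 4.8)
The plan is to follow the proof of Theorem~\ref{theorem8} essentially word for word, once the combinatorics of slices for profile $c=(1,0,1,0)$ has been matched to that of $c=(2,2)$. Since $c=(1,0,1,0)$ has rank $r=4$ and level $\ell=2$, there are $\binom{\ell+r-1}{r-1}=\binom{5}{3}=10$ shapes. First I would list these ten shapes, draw the minimal slice of each shape with positive weight, and record the residue $\pmod{4}$ that each shape forces on the weights of its slices (enlarging a slice changes its weight by a multiple of $r=4$, so each shape lies in a single residue class). I expect the resulting slice flow to coincide, as a labelled directed graph up to renaming the shapes, with the slice flow for $c=(2,2)$ in Table~5.2: three shapes (call them $a,b,c$, with $a,b$ in the odd classes and $c$ in an even class) form the ``backbone'' used to build the unrestricted product $(1+aq^1+bq^1)\prod_{k\ge 0}(1+cq^{2k+2})(1+aq^{2k+3}+bq^{2k+3})$, while the two remaining ``special'' shapes $d$ and $e$ are the slices that, when inserted into a slot $(1+cq^{2k+2})$, force the two adjacent backbone slices to lie in the $a$-family (for $d$) or the $b$-family (for $e$), so that inserting a $d$-slice deletes the neighbouring $b$-slices and inserting an $e$-slice deletes the neighbouring $a$-slices.

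Second, I would verify the two numerical facts on which the whole argument rests: that $d$ and $e$ have the same minimal positive weight, namely $2$, and that the minimum distance between the weight of a $d$-slice and the weight of an $e$-slice is $4$. Granting these, the generating function of the cylindric partitions whose slices avoid $d$ and $e$ (allowing repetitions and setting all backbone letters to $1$) is again
$$S(q)=\frac{(-q;q^2)_{\infty}}{(q;q)_{\infty}},$$
exactly \eqref{S(q) c=(2,2)}, because the backbone product collapses to $\prod_{k\ge 0}(1+q^{2k+1})/\prod_{k\ge 0}(1-q^{k+1})$.

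Third, I would run the induction on $n$, the number of distinct sizes of slices of shape $d$ or $e$ appearing in a cylindric partition. For $n=0$ the answer is $S(q)$; for $n=1$ both shapes contribute the same sum $\sum_{k\ge 1}q^{2k}/((1+q^{2k-1})(1+q^{2k+1}))=\frac{q^2}{(1+q)(1-q^2)}$, giving $\frac{2q^2}{(1+q)(1-q^2)}S(q)$. For the inductive step, the $2^n$ disjoint cases are indexed by $\{d,e\}^n$, organized exactly as in Theorem~\ref{theorem8}: the minimum weight over all cases is $2+4+\cdots+2n=n(n+1)$, coming from the all-$d$ case, and successively swapping the last letters to $e$ multiplies the collected numerator by $(1+q^{2})(1+q^{4})\cdots$, with a bare factor $2$ at the final, fully symmetric step; meanwhile the new index $k_{n+1}$ together with Lemma~\ref{lemma9} multiplies the denominator by $(1+q^{2n+1})(1-q^{2(n+1)})$. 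Thus passing from $n$ to $n+1$ multiplies the summand by $\dfrac{q^{2(n+1)}(1+q^{2n})}{(1+q^{2n+1})(1-q^{2(n+1)})}$, which carries $\dfrac{2q^{n(n+1)}(-q^2;q^2)_{n-1}}{(q^2;q^2)_n(-q;q^2)_n}$ to $\dfrac{2q^{(n+1)(n+2)}(-q^2;q^2)_{n}}{(q^2;q^2)_{n+1}(-q;q^2)_{n+1}}$; summing over $n\ge 0$ and multiplying by $S(q)$ yields the claimed formula. (Alternatively, since $c=(1,0,1,0)$ is the rank--level dual of $c=(2,2)$, the identity follows at once from Theorem~\ref{theorem8}; the point of the argument above is to exhibit it within the method.)

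The main obstacle is nothing in the analysis --- which is literally that of Theorem~\ref{theorem8}, with Lemma~\ref{lemma9} absorbing every multi-sum --- but the combinatorial bookkeeping of the first two steps: checking that among the ten shapes of $c=(1,0,1,0)$ exactly two are ``special'' with equal minimal weight $2$ and mutual gap $4$, and that inserting one of them into the backbone product deletes precisely the two adjacent slices of one backbone family. Once this dictionary between the two profiles is established, the rest of the proof is a transcription.
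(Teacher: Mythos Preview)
Your approach is exactly the paper's: list the ten shapes, draw the slice flow, observe that it is structurally identical to the slice flow for $c=(2,2)$ (Table~5.2), and invoke the proof of Theorem~\ref{theorem8} verbatim. The paper's proof consists of precisely these steps and nothing more.

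One correction to your bookkeeping expectations: you will \emph{not} find three backbone shapes and two special shapes. Because the rank is $4$, each shape lives in a single residue class mod $4$, so the ten shapes distribute as $2+3+2+3$ across the classes $1,2,3,0$. Concretely (in the paper's lettering) the backbone positions of the $c=(2,2)$ diagram are occupied by six shapes $a,b,d,f,h,l$, cycling with period $4$ in weight, and the two special rows are occupied by four shapes $c,e,k,n$: at weights $\equiv 2\pmod 4$ the top/bottom nodes are $c,e$ (each with minimal weight $2$), and at weights $\equiv 0\pmod 4$ they are $k,n$ (each with minimal weight $4$). So there is no single pair ``$d$ and $e$'' to check; rather, the top row of the slice flow runs $cq^2,kq^4,eq^6,nq^8,cq^{10},\ldots$ and the bottom row runs $eq^2,nq^4,cq^6,kq^8,\ldots$. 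None of this changes the analysis --- once all letters are set to $1$ the slice flow is literally that of Table~5.2 --- but your sentence ``checking that among the ten shapes of $c=(1,0,1,0)$ exactly two are special'' should be replaced by ``checking that the slice flow, as an unlabelled weighted directed graph, coincides with that of $c=(2,2)$''.
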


\begin{proof}
Slices of cylindric partitions with profile $c = (1, 0, 1, 0)$ can have exactly $ \binom{\ell+r-1}{r-1}= $ $\binom{5}{3}$ shapes. The possible shapes are $(0,0,0)$, $(1,0,0)$, $(1,1,0)$, $(1,1,1)$, $(2,0,0)$, $(2,1,0)$, $(2,1,1)$, $(2,2,0)$, $(2,2,1)$, $(2,2,2)$. We will denote these shapes by $e, h, l, b, n, a, d, c, f$ and $k$, respectively. Slices that have these shapes with minimum positive weight are shown below. 

\ytableausetup{centertableaux}
\ytableaushort
{\none,\none }
* {3,2,1,1}
* [*(gray)]{2,2,1,1} ~ ($aq^1$), 
\ytableausetup{centertableaux}
\ytableaushort
{\none,\none }
* {2,2,2,1}
* [*(gray)]{2,2,1,1} ~ ($bq^1$), 
\ytableausetup{centertableaux}
\ytableaushort
{\none,\none }
* {3,3,1,1}
* [*(gray)]{2,2,1,1} ~ ($cq^2$), 
\ytableausetup{centertableaux}
\ytableaushort
{\none,\none }
* {3,2,2,1}
* [*(gray)]{2,2,1,1} ~ ($dq^2$), 

\hfill

\ytableausetup{centertableaux}
\ytableaushort
{\none,\none }
* {2,2,2,2}
* [*(gray)]{2,2,1,1} ~ ($eq^2$), 
\ytableausetup{centertableaux}
\ytableaushort
{\none,\none }
* {3,3,2,1}
* [*(gray)]{2,2,1,1} ~ ($fq^3$), 
\ytableausetup{centertableaux}
\ytableaushort
{\none,\none }
* {3,2,2,2}
* [*(gray)]{2,2,1,1} ~ ($hq^3$), 

\hfill

\ytableausetup{centertableaux}
\ytableaushort
{\none,\none }
* {3,3,3,1}
* [*(gray)]{2,2,1,1} ~ ($kq^4$), 
\ytableausetup{centertableaux}
\ytableaushort
{\none,\none }
* {3,3,2,2}
* [*(gray)]{2,2,1,1} ~ ($lq^4$), 
\ytableausetup{centertableaux}
\ytableaushort
{\none,\none }
* {4,2,2,2}
* [*(gray)]{2,2,1,1} ~ ($nq^4$). 

Slices with shapes $a$ and $b$ have weights congruent to $1$ $\pmod{4}$; slices with shapes $c, d$ and $e$ have weights congruent to $2$ $\pmod{4}$; slices with shapes $f$ and $h$ have weights congruent to $3$ $\pmod{4}$, finally slices with shapes $k, l$ and $n$ have weights congruent to $0$ $\pmod{4}$. Slice flow for the given profile is the same as the slice flow for profile $c=(2,2)$ except for the number of shapes and their names, and can be found below.
\begin{center}
\begin{tikzcd}
                                        & \color{blue}{cq^2} \arrow[rd]           &                                         & \color{blue}{kq^4} \arrow[rd]           &                                         & \color{blue}{eq^6} \arrow[rd]           &                                         & \color{blue}{nq^8} \\
\color{red}{aq^1} \arrow[ru] \arrow[rd] &                                         & \color{red}{fq^3} \arrow[ru] \arrow[rd] &                                         & \color{red}{bq^5} \arrow[ru] \arrow[rd] &                                         & \color{red}{hq^7} \arrow[ru] \arrow[rd] &                    \\
                                        & \color{red}{dq^2} \arrow[rd] \arrow[ru] &                                         & \color{red}{lq^4} \arrow[rd] \arrow[ru] &                                         & \color{red}{dq^6} \arrow[rd] \arrow[ru] &                                         & \color{red}{lq^8}  \\
\color{red}{bq^1} \arrow[ru] \arrow[rd] &                                         & \color{red}{hq^3} \arrow[ru] \arrow[rd] &                                         & \color{red}{aq^5} \arrow[ru] \arrow[rd] &                                         & \color{red}{fq^7} \arrow[ru] \arrow[rd] &                    \\
                                        & \color{blue}{eq^2} \arrow[ru]           &                                         & \color{blue}{nq^4} \arrow[ru]           &                                         & \color{blue}{cq^6} \arrow[ru]           &                                         & \color{blue}{kq^8}

\end{tikzcd}\\
 {\small Table 5.5: Profile $c=(1,0,1,0)$}
 \captionlistentry[table]{Profile $c=(1,0,1,0)$}
\end{center}
\end{proof}

\begin{theorem} \label{theorem12}
Cylindric partitions with profile $c=(1,1,0,0)$ have the following generating function:
$$\bigg(\sum_{n\geq 0} \frac{q^{n^2}}{(q^2 ; q^2)_n }\bigg) \cdot \frac{(-q^2 ; q^2)_{\infty}}{(q ; q)_{\infty}}. $$
\end{theorem}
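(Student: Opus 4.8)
The plan is to mirror the proof of Theorem~\ref{theorem9} for profile $c=(3,1)$ almost verbatim; the only genuinely new content is the combinatorial data attached to the profile $c=(1,1,0,0)$, which has rank $r=4$ and level $\ell=2$. First I would enumerate the $\binom{\ell+r-1}{r-1}=\binom{5}{3}=10$ shapes of slices, exhibit for each shape the minimal slice of positive weight together with its generating term, and record the residue $\pmod 4$ of the weights of each shape. The crucial structural observation is that the resulting slice flow is isomorphic, as a weighted directed graph, to the slice flow of profile $c=(3,1)$ (Table 5.3): its nodes split into a ``backbone'' --- one family of shapes carrying odd weights, contributing a factor $1+bq^{2k+1}$ for each $k\geq0$, together with two families carrying even weights, contributing jointly $1+cq^{2k+2}+dq^{2k+2}$ --- and two ``exceptional'' families of shapes, one of which first appears at weight $1$ while the other first appears at weight $3$, with the restriction that inserting an exceptional slice of weight $2k+3$ forces the removal of the two even-weight backbone slices of weights $2k+2$ and $2k+4$ belonging to one fixed even-weight shape.

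With this identification in hand, the rest is a word-for-word transcription of the proof of Theorem~\ref{theorem9}. I would set $P(q):=\prod_{k\geq0}(1+bq^{2k+1})(1+cq^{2k+2}+dq^{2k+2})$ over the backbone shapes (with this profile's labels), allow repetitions of slices, and specialize the backbone variables to $1$ to obtain $S(q)=\frac{(-q^2;q^2)_\infty}{(q;q)_\infty}$, exactly as in \eqref{S(q) c=(3,1)}. Then I would stratify the count by $n$, the number of distinct sizes of exceptional slices that occur: the case $n=0$ contributes $S(q)$, and for $n=1$ a direct summation via Lemma~\ref{lem4} gives $\frac{q^{1}(1+q^{2})}{(1+q^{2})(1-q^{2})}\,S(q)=\frac{q^{1^2}}{(q^2;q^2)_1}\,S(q)$.

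The general case then follows by induction on $n$, once more copying Theorem~\ref{theorem9}. When $n$ increases to $n+1$ the number of summation indices $k_i$ increases by one, so by Lemma~\ref{lem4} the denominator acquires a factor $(1+q^{2(n+1)})(1-q^{2(n+1)})$; on the other hand the minimal configuration of $n+1$ exceptional slices has weights $1,3,\dots,2n+1$, summing to $1+3+\cdots+(2n+1)=(n+1)^2$, together with a free swap between the two exceptional shapes in the topmost slot, so the numerator acquires the factor $q^{2n+1}(1+q^{2(n+1)})$. The factor $1+q^{2(n+1)}$ cancels, producing $\frac{q^{(n+1)^2}}{(q^2;q^2)_{n+1}}\,S(q)$, and summing over $n\geq0$ yields the claimed generating function.

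The main obstacle is the first step: correctly listing the ten minimal slices, determining their residues $\pmod 4$, and --- most delicately --- checking that the placement restrictions in the slice flow match exactly those of profile $c=(3,1)$ (one exceptional family entering at weight $1$, the other at weight $3$, and each insertion deleting precisely the two adjacent even-weight backbone slices). Once this is verified, no new $q$-series work is required: Lemmas~\ref{lem1}--\ref{lem4} already supply every summation identity used in Theorem~\ref{theorem9}, hence in this proof.
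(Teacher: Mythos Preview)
Your proposal is correct and follows essentially the same approach as the paper: the paper's proof of Theorem~\ref{theorem12} consists precisely of listing the ten minimal slices with their residues $\pmod 4$, displaying the slice flow (Table~5.6), observing that it coincides with the slice flow for profile $c=(3,1)$ up to relabeling, and then deferring entirely to the proof of Theorem~\ref{theorem9}. Your outline is in fact more detailed than the paper's, which omits the inductive recap you sketch and simply declares the argument identical.
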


\begin{proof}
Slices of cylindric partitions with profile $c = (1, 1, 0, 0)$ can have exactly $ \binom{\ell+r-1}{r-1}= $ $\binom{5}{3}$ shapes. The possible shapes are $(0,0,0)$, $(1,0,0)$, $(1,1,0)$, $(1,1,1)$, $(2,0,0)$, $(2,1,0)$, $(2,1,1)$, $(2,2,0)$, $(2,2,1)$, $(2,2,2)$. We will denote these shapes by $f, k, a, d, b, c, e, h, l$ and $n$, respectively. Slices that have these shapes with minimum positive weight are shown below. 

\ytableausetup{centertableaux}
\ytableaushort
{\none,\none }
* {2,2,1,1}
* [*(gray)]{2,1,1,1} ~ ($aq^1$), 
\ytableausetup{centertableaux}
\ytableaushort
{\none,\none }
* {3,1,1,1}
* [*(gray)]{2,1,1,1} ~ ($bq^1$), 
\ytableausetup{centertableaux}
\ytableaushort
{\none,\none }
* {3,2,1,1}
* [*(gray)]{2,1,1,1} ~ ($cq^2$), 
\ytableausetup{centertableaux}
\ytableaushort
{\none,\none }
* {2,2,2,1}
* [*(gray)]{2,1,1,1} ~ ($dq^2$), 

\hfill

\ytableausetup{centertableaux}
\ytableaushort
{\none,\none }
* {3,2,2,1}
* [*(gray)]{2,1,1,1} ~ ($eq^3$),
\ytableausetup{centertableaux}
\ytableaushort
{\none,\none }
* {2,2,2,2}
* [*(gray)]{2,1,1,1} ~ ($fq^3$),
\ytableausetup{centertableaux}
\ytableaushort
{\none,\none }
* {3,3,1,1}
* [*(gray)]{2,1,1,1} ~ ($hq^3$),
\ytableausetup{centertableaux}
\ytableaushort
{\none,\none }
* {3,2,2,2}
* [*(gray)]{2,1,1,1} ~ ($kq^4$),

\hfill

\ytableausetup{centertableaux}
\ytableaushort
{\none,\none }
* {3,3,2,1}
* [*(gray)]{2,1,1,1} ~ ($lq^4$),
\ytableausetup{centertableaux}
\ytableaushort
{\none,\none }
* {3,3,3,1}
* [*(gray)]{2,1,1,1} ~ ($nq^5$).

Slices with shapes $a, b$ and $n$ have weights congruent to $1$ $\pmod{4}$; slices with shapes $c$ and $d$ have weights congruent to $2$ $\pmod{4}$; slices with shapes $e, f$ and $h$ have weights congruent to $3$ $\pmod{4}$, finally slices with shapes $k$ and $l$ have weights congruent to $0$ $\pmod{4}$. Slice flow for the given profile is the same as the slice flow for profile $c=(3,1)$ except for the number of shapes and their names, and can be found below.

\begin{center}
\begin{tikzcd}
\color{blue}{bq^1} \arrow[rd]           &                                         & \color{blue}{hq^3} \arrow[rd]           &                                         & \color{blue}{nq^5} \arrow[rd]           &                                         & \color{blue}{fq^7} \arrow[rd]           &                   \\
                                        & \color{red}{cq^2} \arrow[rd] \arrow[ru] &                                         & \color{red}{lq^4} \arrow[rd] \arrow[ru] &                                         & \color{red}{dq^6} \arrow[rd] \arrow[ru] &                                         & \color{red}{kq^8} \\
\color{red}{aq^1} \arrow[ru] \arrow[rd] &                                         & \color{red}{eq^3} \arrow[ru] \arrow[rd] &                                         & \color{red}{aq^5} \arrow[ru] \arrow[rd] &                                         & \color{red}{eq^7} \arrow[ru] \arrow[rd] &                   \\
                                        & \color{red}{dq^2} \arrow[ru] \arrow[rd] &                                         & \color{red}{kq^4} \arrow[ru] \arrow[rd] &                                         & \color{red}{cq^6} \arrow[ru] \arrow[rd] &                                         & \color{red}{lq^8} \\
                                        &                                         & \color{blue}{fq^3} \arrow[ru]           &                                         & \color{blue}{bq^5} \arrow[ru]           &                                         & \color{blue}{hq^7} \arrow[ru]           &               \end{tikzcd}\\
 {\small Table 5.6: Profile $c=(1,1,0,0)$}
 \captionlistentry[table]{Profile $c=(1,1,0,0)$}
\end{center}
\end{proof}

\section{Future Work}
\label{sec:future}
Our plan is to continue to work on cylindric partitions with rank $2$ and levels $\geq 5$. We will try to generalize our results. We will also simultaneously work on cylindric partitions with profile $c=(1,1,1)$. We will try to give expressions alternative to Borodin's formula for the generating functions having the profiles mentioned above. 
		
\section*{Acknowledgements}  
This work is based on the author's PhD dissertation. The author would like to thank her advisor Kağan Kurşungöz for his valuable guidance and comments. The author also would like to thank Ole Warnaar for reading the manuscript and suggesting the references \cite{KanadeRussell}, \cite{uncu2023proofs},  \cite{Warnaar2025} and telling the connection between formula \eqref{c=(3,1)}  and formula (7.25) in \cite{warnaar2023a2}.

	\bibliography{refs}
	
\end{document}